\setlist[enumerate,1]{label={(\roman*)}}
\newtheorem{theorem}{Theorem}[section]
\newtheorem{lemma}[theorem]{Lemma}
\newtheorem{prop}[theorem]{Proposition}
\newtheorem{corollary}[theorem]{Corollary}
\newtheorem{conjecture}[theorem]{{Conjecture}}
\newtheorem{claim}[theorem]{{Claim}}
\theoremstyle{remark}
\newtheorem{remark}[theorem]{Remark}
\newtheorem{rem}[theorem]{Remark}
\theoremstyle{definition}
\newtheorem{ex}[theorem]{{Example}}
\newtheorem{definition}[theorem]{{Definition}}
\newtheorem{df}[theorem]{{Definition}}
\def\bclaim{\begin{claim}}
\def\eclaim{\end{claim}}
\def\bdefin{\begin{definition}}
\def\edefin{\end{definition}}
\def\bcor{\begin{corollary}}
\def\ecor{\end{corollary}}
\def\bthm{\begin{theorem}}
\def\ethm{\end{theorem}}
\def\bconj{\begin{conjecture}}
\def\econj{\end{conjecture}}
\def\blem{\begin{lemma}}
\def\elem{\end{lemma}}
\def\blemma{\begin{lemma}}
\def\elemma{\end{lemma}}
\def\bprop{\begin{prop}}
\def\eprop{\end{prop}}
\def\bremark{\begin{remark}}
\def\eremark{\end{remark}}
\newcommand{\R}{\mathbb R}
\newcommand{\RR}{\mathbb R}
\newcommand{\C}{\mathbb C}
\newcommand{\CC}{\mathbb C}
\newcommand{\N}{\mathbb N}
\newcommand{\Z}{\mathbb Z}
\renewcommand{\P}{\vec P}
\newcommand{\s}{\mathfrak s}
\DeclareMathOperator{\id}{Id}
\DeclareMathOperator{\spec}{spec}
\DeclareMathOperator{\ham}{Ham}
\def\sign{\h{\rm sign}} 
\newcommand{\calF}{\mathcal F}
\newcommand{\calD}{\mathcal D}
\newcommand{\calC}{\mathcal C}
\newcommand{\la}{\left\langle}
\newcommand{\ra}{\right\rangle}
\newcommand{\tJ}{\hat J}
\newcommand{\tom}{\widehat \omega}
\newcommand{\tO}{\widehat \Omega}
\newcommand{\tg}{\hat g}
\def\USC{\operatorname{USC}}
\def\usc{\operatorname{usc}}
\def\diag{\operatorname{diag}}
\def\topp{\operatorname{top}}
\def\bott{\operatorname{bot}}
\def\Im{\operatorname{Im}\,}
\def\im{\operatorname{Im}\,}
\def\Re{\operatorname{Re}\,}
\def\Sym{{\hbox{\rm Sym}^2}}
\def\Int{\hbox{\rm int}\,}
\def\cl{\hbox{cl}}
\def\Im{{\operatorname{Im}\,}}
\def\Re{{\operatorname{Re}\,}}
\def\max{{\operatorname{max}}}
\def\tr{\hbox{\rm tr}}
\def\q{\quad}
\def\qq{\qquad}
\def\bpf{\begin{proof}}
\def\epf{\end{proof}}
\def\beq{\begin{equation}}
\def\eeq{\end{equation}}
\def\beqno{\begin{equation*}}
\def\eeqno{\end{equation*}}
\def\eaeq{\end{aligned}}
\def\baeq{\begin{aligned}}
\def\P{{\mathcal P}}
\def\tP{\widetilde{{\mathcal P}}}
\def\SA{\hbox{\rm SA}}
\def\tF{\widetilde F}
\def\del{\partial}
\def\sm{\setminus}
\def\nt{\nabla^2}
\def\HL{Harvey--Lawson }
\def\HLno{Harvey--Lawson}
\def\O{X}
\def\D{D}
\def\vp{\varphi}
\def\FDP{$F$-Dirichlet problem }
\def\vF{\vec F}
\def\vtF{\vec{\tF}}
\def\vcalF{\vec \calF}
\def\vtcalF{\vec{\widetilde{\calF}}}
\def\tcalF{{\widetilde{\calF}}}
\def\n{\nabla}
\def\h#1{\hbox{#1}}
\def\MA{Monge--Amp\`ere }
\def\II{\hbox{\rm II}}
\def\wt{\widetilde}
\def\lb{\label}
\def\th{\theta}
\def\MA{Monge--Amp\`ere }
\def\MAno{Monge--Amp\`ere}
\def\K{K\"ahler }
\def\i{\sqrt{-1}}
\def\del{\partial}
\def\ra{\rightarrow}
\def\eps{\epsilon}
\def\del{\partial}
\newcommand{\calO}{\mathcal{O}}
\def\sm{\setminus}
\def\O{\Omega}
\def\sm{\setminus}
\def\w{\wedge}
\def\o{\omega}
\def\vp{\varphi}
\def\beq{\begin{equation}}
\def\eeq{\end{equation}}
\def\bi#1{\bibitem{#1}}
\def\h#1{\hbox{#1}}
\def\calO{{\mathcal O}}
\def\calL{{\mathcal L}}
\def\calM{{\mathcal M}}
\def\calP{{\mathcal P}}
\def\calE{{\mathcal E}}
\def\calS{{\mathcal S}}
\def\La{\Lambda}
\def\la{\lambda}
\def\Th{\Theta}
\def\graph{{\operatorname{graph}}}
\def\dist{\h{\rm dist}}
\def\Symn{\Sym(\RR^n)}
\def\Symnplusone{\Sym(\RR^{n+1})}
\def\wtTh{\widetilde{\Th}}
\def\wtThk{\widetilde{\Th}_k}
\def\dx{{\bf dx}}
\renewcommand{\S}{Section~}
\begin{document}
\title{The degenerate special Lagrangian equation}
\author{Yanir A. Rubinstein and Jake P. Solomon}
\date{June 2015}

\maketitle

\begin{abstract}
This article introduces the {\it degenerate special Lagrangian equation (DSL)}
and develops the basic analytic tools to construct and study its solutions.
The DSL governs geodesics in the space of positive graph Lagrangians in $\mathbb{C}^n.$
Existence of geodesics in the space of positive Lagrangians is an important step in a program for proving existence and uniqueness of special Lagrangians.
Moreover, it would imply certain cases of the strong Arnold conjecture from Hamiltonian dynamics.

We show the DSL is degenerate elliptic. We introduce a {\it space-time Lagrangian angle} for one-parameter families of graph Lagrangians, and construct its regularized lift. The superlevel sets of the regularized lift define subequations for the DSL in the sense of Harvey--Lawson. We extend the existence theory of Harvey--Lawson for subequations to the setting of domains with corners, and thus obtain solutions to the Dirichlet problem for the DSL in all branches. Moreover, we introduce the {\it calibration measure}, which plays a r\^ole similar to that of the Monge--Amp\`ere measure in convex and complex geometry. The existence of this measure and regularity estimates allow us to prove that the solutions we obtain in the outer branches of the DSL have a well-defined length in the space of positive Lagrangians.
\end{abstract}



\tableofcontents

\section{Introduction}
\subsection{The DSL}
Let $D \subset \R^n$ be a bounded domain with smooth boundary $\partial D$ and let
\beq
\label{calDEq}
\calD:= (0,1) \times D \subset \R^{n+1}.
\eeq
The coordinate on $(0,1)$ is called $t$ and the coordinates on $D$ are called $x.$
Denote by $I_n$ the diagonal $(n+1) \times (n+1)$ matrix with diagonal entries $(0,1,\ldots,1).$
We say a function $u\in C^2(\calD)$ satisfies the {\it degenerate special Lagrangian (DSL)} equation of phase $\th\in(-\pi,\pi]$ if
\beq
\label{DSLMainEq}
\Im \left( e^{-\i \th} \det (I_n + \i\nabla^2 u)\right) = 0, \qquad
\Re \left(e^{-\i \th} \det \left(I + \i\nabla_x^2 u\right)\right) > 0.
\eeq
The goal of this article is to study the Dirichlet problem for the DSL equation.

We prove that the DSL is degenerate elliptic. More generally, the relationship between the DSL and the special Lagrangian equation, introduced in the classical work of Harvey--Lawson~\cite{HL82}, is analogous to the relationship between the homogeneous and inhomogeneous \MA equations. Thus, it is natural from the analytic point of view to study the DSL and the equation has a rich structure. Yet the precise formulations and proofs of many properties of the DSL are surprisingly complex in comparison with their \MA analogues.

From the geometric point of view, the DSL governs geodesics in the space of positive Lagrangians. Such geodesics play a crucial r\^ole in the program of the second author~\cite{S1,S2} concerning existence and uniqueness of special Lagrangians in Calabi--Yau manifolds, the geometry of the space of positive Lagrangians, and stability conditions for Lagrangian submanifolds in the context of mirror symmetry. Another geometric motivation for this work is the observation of Lemma \ref{ArnoldLemma} that whenever a pair of positive Lagrangians is connected by a sufficiently regular geodesic, the number of intersection points is bounded below by the number of critical points of a function on one of them. Thus, this article can be viewed as a first step in a new approach to the strong Arnold conjecture~\cite{Ar89}.  Lemma~2.1 applies equally to tranverse and non-transverse Lagrangians. We refer the reader to Section~\ref{LeviCivitaSubSec} for a more in depth discussion.

Previous work of the second author and Yuval~\cite{SY} constructed geodesics in the space of positive Lagrangians in Milnor fibers using $O(n)$ symmetry to reduce the
problem
to a Hamiltonian flow ODE. The present work constructs geodesics of positive Lagrangians in $\C^n$ in the absence of any symmetry assumptions using the theory of fully non-linear degenerate elliptic PDE. Unlike in the case of the non-degenerate special Lagrangian equation studied by Harvey--Lawson \cite[Corollary 2.14]{HL82}, we cannot use the implicit function theorem to construct many solutions for the DSL because the symbol is degenerate.

\subsection{Results}
We now give an overview of our main results mostly avoiding technical background. The reader is referred to later sections for sharper statements.

An understanding of the DSL starts with
establishing a notion of subsolutions for the DSL.
With an eye toward \HLno's Dirichlet duality theory
\cite{HL},
we are led to construct a {\it subequation}
for the DSL. A subequation, known also as a Dirichlet set~\cite{HL}, is a proper closed subset
$F$ of the set of symmetric matrices $\Sym(\RR^m)$
that satisfies
\beq
\lb{DirichletSetPropEq}
F+\P\subset F,
\eeq
where $\P\subset\Sym(\RR^m)$ is the set of nonnegative matrices.
Such a set $F$ is a subequation for a PDE
of the form $f(\nabla^2u(x))=0$ for functions $u \in C^2(U),\,U\subset\RR^m,$ if $C^2(U)$ solutions
of the equation satisfy $\nabla^2u(x)\in\del F$
for each $x\in U$. A subequation $F$ gives rise to a natural notion of subsolution. Namely, $u \in C^2(U)$ is a subsolution if $\nabla^2u(x) \in F$ for all $x \in U.$ Moreover, $F$ gives rise to a weak version of the Dirichlet problem for each domain $U \subset \RR^m.$ \HL show existence and uniqueness of continuous solutions to the $F$-Dirichlet problem under certain assumptions on the boundary of $U.$ If the solution is in $C^2(U)$, it must be a solution in the classical sense.

To obtain a subequation for the DSL, we associate to each $u \in C^2(\calD)$ the circle valued function
\[
\Theta_u(t,x) = \Theta(\nabla^2 u(t,x)) = \arg \det(I_n + \i \nabla^2 u(t,x)) \in S^1,
\]
defined where $\det(I_n + \i \nabla^2 u(t,x)) \neq 0.$ We call $\Theta$ the \emph{space-time Lagrangian angle} by analogy with the Lagrangian angle of \HL\cite{HL82}. If $u$ solves the DSL of phase~$\theta,$ then $\Theta_u \equiv \theta.$
First, we promote this equality of angles to an equality of real numbers. Then, the subequation and its corresponding notion of subsolution are obtained by weakening the equality to an inequality using the order of $\RR.$
Indeed, let $\calS \subset \Symnplusone$ be the set of matrices such that the first column and row vanish identically. For $B$ a complex matrix, denote by $\spec(B)$ the set of its eigenvalues, and for $\lambda \in \spec(B)$ denote by $m(\lambda)$ its multiplicity as a root of the characteristic polynomial. Consider the branch of $\arg$ with values in $(-\pi,\pi].$ For $A \in \Symnplusone\sm\calS,$ define
\[
\widehat \Theta(A) = \sum_{\lambda \in \spec(I_n + \i A)} m(\lambda)\arg(\lambda).
\]
So, $\arg \det(I_n + \i A) = \widehat\Theta(A) \mod 2\pi.$
\bthm
The function $\widehat \Theta$ is well-defined and differentiable on $\Symnplusone\sm\calS$. Denote by $\widetilde \Theta$ the minimal upper semi-continuous extension of $\widehat \Theta$ to $\Symnplusone.$ Then, for each $c\in(-(n+1)\pi/2,(n+1)\pi/2)$ such
that $c\equiv \th\, \mod \, 2\pi$, the set
$$
\calF_c
=
\{
A\in\Symnplusone\,:\,
\wtTh(A)\ge c\}
$$
is a subequation for the DSL of phase $\th$.
\ethm

This result is contained in Theorems \ref{USCThm} and~\ref{calFcThm} and Corollary~\ref{CtwoDSLCor} below. The different choices of $c$ for a given $\theta$ correspond to the branches of the DSL. An interesting feature of the DSL, not seen in \MAno, is the locus where $\det(I_n + \i \nabla^2 u(t,x)) = 0$ and thus $\Theta_u$ is not defined. We show this is precisely the critical locus of $\del_t u$ or, equivalently, the locus where $\nabla^2u \in \calS.$ The spacetime Lagrangian angle $\Theta$ and its lift $\widehat \Theta$ cannot be extended continuously over this locus. So, we are forced to consider instead the minimal upper semi-continuous extension $\wtTh.$ It is a beautiful feature of the DSL equation and \HLno's theory that $\wtTh$ nonetheless gives rise to a subequation. Other subequations we are aware of arise as superlevel sets of \emph{continuous} functions.

\HL prove the existence of continuous solutions to the Dirichlet problem for general subequations on domains with smooth boundary that is ``strictly convex'' in an appropriate sense. However, our domain $\calD = (0,1)\times D$ has corners and is not ``strictly convex.'' In Theorem~\ref{tm:subs}, we extend \HLno's results to a class of domains including $\calD.$ Possible applications
go beyond the DSL. For instance, Theorem \ref{tm:subs}
allows one to show that the homogeneous
real/complex \MA equation on certain product
domains has continuous solutions in all
branches. Previously, the only solutions
known to exist were in the convex/psh or concave/plurisuperharmonic branches.

Building on the general existence result of Theorem~\ref{tm:subs},
we prove the existence and uniqueness of solutions for all branches of the $\calF_c$-Dirichlet problem and hence for the endpoint problem for geodesics. A special case of our result is the following theorem.
\bthm
\label{MainExistenceThm}
Let $D\subset \RR^n$ be a bounded strictly convex domain. For $i = 0,1,$ let $\vp_i\in C^2\left(D\right)$ satisfy
\begin{equation}\label{eq:plbc}
\tr\tan^{-1}(\nabla^2\vp_i)
\in(c-\pi/2,c+\pi/2).
\end{equation}
There exists a unique solution $u \in C^0(\overline\calD)$ for the $\calF_c$-Dirichlet problem with $u|_{\{i\} \times D}= \vp_i$ and $u|_{[0,1]\times \del D}$ affine in $t.$ Moreover, $u$ is Lipschitz in $t$ on $\overline\calD.$ If $|c|\in [n\pi/2,(n+1)\pi/2)$, then $u \in C^{0,1}(\calD)$.
\ethm

Condition~\eqref{eq:plbc} is equivalent to the geometric condition that the graph of $\nabla \vp_i$ is a positive Lagrangian.
For more detailed statements, we refer to Theorem~\ref{tm:cgeq} and Lemma~\ref{tm:Lip}. Remark~\ref{rm:convexity} explains how to deduce Theorem~\ref{MainExistenceThm} from Theorem~\ref{tm:cgeq}.

The last statement in Theorem \ref{MainExistenceThm} establishes further regularity
for solutions in the outermost branches.
The subequations $\calF_c$ with $|c|\in [n\pi/2,(n+1)\pi/2)$
are the analogues  of the convex/concave and
plurisubharmonic/plurisuperharmonc
branches in the study of the real and complex \MA equations.
For Monge--Amp\`ere, essentially the only regularity results beyond
$C^0$ to date concern these branches. Thus,
Theorem \ref{MainExistenceThm} can be considered as
giving essentially optimal
regularity for all the inner branches.
For the outermost branches of the equation, somewhat stronger results
are possible using completely different PDE techniques that do not
work for the other branches. We
leave such a treatment to a separate article.

For a solution $u$ of the DSL in one of the outermost
branches, Theorem~\ref{CalibThm} shows that the restriction of
$\Re (dz_1 \wedge \ldots dz_n)$
to the graph of $\nabla_x u|_{\{t\}\times \R^n}$ in $\C^n = \R^n \oplus \R^n$ is a well-defined positive measure, which we call the {\it calibration measure}. This result holds despite the fact that such graphs may not have a tangent space at every point.
In addition, Lemma~\ref{tm:Lip} gives a partial Lipschitz a priori estimate for the solution $u$. Combining the Lipschitz estimate and the existence of the calibration measure, Theorem~\ref{lengthThm} shows the length of the geodesic of graph Lagrangians corresponding to $u$ is well-defined. Furthermore, integrating the calibration measure along paths of Lagrangians, we obtain the {\it calibration functional}. This functional is affine along smooth geodesics, and we conjecture it is affine along weak geodesics as well. Thus, the calibration measure plays a r\^ole in the geometry of positive Lagrangians similar to that of the \MA measure in convex and complex geometry.

\subsection{Organization}
The article is organized as follows. In \S\ref{SPLSec} we
recall the Riemannian metric on the space of positive
Lagrangians introduced in \cite{S1} along with the associated notions of parallel transport and geodesics.
Lemma~\ref{ArnoldLemma} shows that a version of the strong Arnold conjecture
follows
if we assume
the existence of sufficiently regular geodesics.
Finally, Proposition~\ref{SPLgeodEq} shows that the geodesic equation for the space of positive graph
Lagrangians in $\C^n$ is equivalent to the DSL.

In Section~\ref{DDSec} we establish the basic properties of the regularized lift of the space-time Lagrangian angle $\wtTh$.
In Section \ref{DegEllipSec} we compute the symbol of the linearization
of the DSL at a solution and prove the DSL is degenerate elliptic.
In Section \ref{DSetSec} we construct subequations
associated to the DSL in the sense of Harvey--Lawson~\cite{HL}.
This is the key to the definition of the weak solutions
of the DSL that are the main focus of the remainder of the article.

Section~\ref{DDTheoreySec} recalls the main features
of the Dirichlet duality theory of Harvey--Lawson.
Section~\ref{DDweakSec} extends Dirichlet duality theory to include weaker boundary assumptions. Section~\ref{MainThmSec} applies the results of Section~\ref{DDweakSec} to prove existence and uniqueness of solutions of the DSL in all branches.

Section \ref{RegSec} establishes basic regularity results for solutions of the DSL.
First, \S\ref{ConvFibSubSec} shows that solutions to the $\calF_c$-Dirichlet problem
are ``convex in the $x$ variables'' in the sense of the subequation
for the nondegenerate special Lagrangian equation.
Second, \S\ref{LipschitzSubSec} shows that solutions to the $\calF_c$-Dirichlet problem are Lipschitz in the variable $t$.
Section~\ref{CalibSec} introduces the {\it calibration measure}
for subsolutions in the outermost branches of the DSL.
Section~\ref{ssec:length} shows that the Riemannian length functional is well-defined on the solutions we construct for the DSL in the outermost branches.
In Section~\ref{ssec:cal} we introduce the {\it calibration
functional}
and show it is affine along
sufficiently regular geodesics. Furthermore, we formulate a conjecture characterizing weak solutions of the DSL as those subsolutions along which the calibration functional is affine. Appendix~\ref{LimitDSet} proves an alternative formula for the lifted space-time Lagrangian angle $\wtTh$ (Corollary~\ref{wtThkFormulaCor}) by viewing the DSL as a limit of non-degenerate special Lagrangian equations. Appendix~\ref{sec:gdsl} gives a geometric formulation of the DSL equation valid in an arbitrary Calabi-Yau manifold.

\section{The space of positive Lagrangians}
\label{SPLSec}
In this section we review the construction  of a weak Riemannian
metric on the space of positive Lagrangians \cite[\S5]{S1}. We then
formulate the equation for geodesics
in this space in the case $X=\C^n$, introducing the {\it degenerate special Lagrangian equation}.

\subsection{Lagrangians in a Calabi--Yau manifold}

\def\dz{{\bf dz}}

Let $(X, J,\omega,\Omega) $ be a Calabi--Yau manifold of complex dimension $n$.
This amounts to $(X, J,\omega)$ being complex, where $J$ denotes the complex structure,
so $g(\,\cdot\,,\,\cdot\,):=\o(\,\cdot\,,J\,\cdot\,)$ is a \K metric,
and $\Omega$ is a holomorphic nowhere vanishing $(n,0)$-form,
with  $\i^{n^2}\Omega\wedge\overline{\Omega}=(2\o)^n/n!$.
Thus, in a local coordinate chart $U\ni p$, there are holomorphic coordinates $z=(z^1,\ldots z^n)$
such that
$\omega(p)=\frac{\i}2\sum_j dz^j\w d\bar{z}^j$
and $\Omega(p) =\dz:=dz^1\wedge \cdots \wedge dz^n$. For any tangent $n$-plane $\tau\in T_pX$
\cite[p. 88]{HL82},
$$
|\dz (\tau)|^2=|\tau\wedge J\tau|_{g(p)}\le
|\tau|_{g(p)}^2,
$$
with equality if and only if $\tau$ is Lagrangian.
Thus, if $\Lambda\subset X $ is an $n$-dimensional submanifold, then globally on $\Lambda$,
\beq\label{HLInEq}
|\Omega|_\Lambda|^2=
|\Re \Omega|_\Lambda|^2+|\Im \Omega|_\Lambda|^2\le 1,
\eeq
where the norms are those induced by $g$,
with equality if and only if $\Lambda$ is Lagrangian.
From now and on, $\Lambda$ will always denote a Lagrangian submanifold.
In particular,
\beq
\label{AngleEq}
\Omega|_\Lambda= e^{\i\theta_\Lambda} dV_g|_\Lambda,
\eeq
for some function $\theta_\Lambda:  \Lambda\rightarrow S^1$,
where $dV_g$ equals the Riemannian volume form associated to $g$.
Following Harvey--Lawson, $\Lambda$ is called {\it special Lagrangian (SL) of phase $\theta$} if
$\theta_\Lambda$ is
constant and equal to $\th\in(-\pi,\pi]$.
In other words,
$\Lambda$ is calibrated by
$\Re (e^{-\i \th}\Omega)$.
Alternatively, up to a choice of orientation, the phase $\theta$ special Lagrangian condition is equivalent to
$\Im (e^{-\i \th}\Omega|_\La)=0$ \cite{HL82}.

\subsection{The space of positive Lagrangians}
\lb{SPLSubSec}
Let $L$ be a possibly non-compact connected $n$-dimensional manifold.
Define
$$
\baeq
\calL &=
\left \{
\Gamma \subset X \h{\ an oriented Lagrangian }
\h{submanifold diffeomorphic to } L
\right \}.
\eaeq
$$
%
%
For $\th\in(-\pi,\pi]$, the space of $\th$-\emph{positive} Lagrangians is defined as
\[
\calL_\th^+ = \{\Gamma \in \calL : \Re (e^{-\i \th}\Omega)|_\Gamma > 0 \}.
\]
Note that $\Gamma$ is $\th$-positive iff $|\theta_\Gamma-\th|<\pi/2$.
In other words, $\Re (e^{-\i \th}\Omega)$ restricts to a volume form on  $\Gamma$.
This notion (with $\th=0$) was used by Wang in a different context
\cite[p. 302]{Wang}.
In particular,
any special Lagrangian  in $\calL$
of phase $\theta$
is contained
in $\calL_{\th'}^+$
for each $\th'\in(\th-\pi/2,\th+\pi/2)$.

Denote by $\ham (X,\omega)$ the group of compactly supported Hamiltonian diffeomorphisms of $X.$ Denote by $\calO_\th\subset\calL^+_\th$ a connected component of the intersection of $\calL^+_\th$ with an orbit of $\ham (X,\omega)$ acting on $\calL.$ The space $\calO_\th$ is called an exact isotopy class.

We now describe the tangent space to $\calO_\th$ at $\Gamma\in\calO_\th$.
Recall that whenever $f:X\ra Y$ is a smooth map, $v$ is a vector
field along $f$, and $\alpha$ is a
differential $k$-form on $Y$, we define $\iota_v\alpha$ to be the $(k-1)$-form on $X$ satisfying
\beq\label{IntProdEq}
\iota_v\alpha(X_1,\ldots,X_{k-1}):=
\alpha(v,df(X_1),\ldots,df(X_{k-1})).
\eeq
Given a
 curve $\Lambda:(-\eps,\eps)\ra\calO_\th$ with $\Lambda(0) = \Gamma,$
we choose a smooth family of
diffeomorphisms $g_t:L\ra\Lambda_t : = \Lambda(t)$,
and consider the 1-form
$
\iota_{dg/dt}\o.
$
Since $\Lambda_t$ are Lagrangian, this 1-form is closed.
By Akveld--Salamon \cite[Lemma 2.2]{AS}, it is also exact.
If $L$ is non-compact, let $h_t:\Lambda_t\ra\RR$ be the unique compactly supported
function such that
\beq
\label{dgdtEq}
\iota_{dg_t/dt}\o=d(h_t\circ g_t).
\eeq
If $L$ is compact, let $h_t$ be the unique function satisfying~\eqref{dgdtEq} and
\beq\label{nmlz}
\int_{\Lambda_t} h_t \Re\Omega = 0.
\eeq
According to Akveld--Salamon \cite[Lemma 2.1]{AS},
$h_t$ is independent
of the choice of the diffeomorphisms $g_t$. Thus, in either case, we
make the identification $d\La_t/dt \equiv h_t$. If $L$ is non-compact, this
identifies the tangent space of $\calO_\th$ at $\Gamma$ with the space of compactly supported smooth functions on $\Gamma$,
$$
T_\Gamma\calO_\th\simeq C_0^\infty(\Gamma).
$$
If $L$ is compact, this identifies $T_\Gamma\calO_\th$ with the space of smooth functions satisfying the normalization condition~\eqref{nmlz}.
Following \cite{S1}, we define a weak Riemannian metric on $\calO_\th$ by
$$
(h, k)_\th|_\Gamma: =\int^{}_{\Gamma} hk\Re (e^{-\i \th}\Omega|_\Gamma), \q h,k\in T_\Gamma\calO_\th, \q \Gamma\in \calO_\th.
$$

\subsection{The Levi--Civita connection, geodesics, and the Arnold conjecture}

\label{LeviCivitaSubSec}

Let $\La:[0,1]\ra \calO_\th$ be a path in $\calO_\th$, and write $\Lambda_t = \Lambda(t).$ Denote by
$g_t:L\ra \La_t$ a one-parameter family of diffeomorphisms.
Let $h_t\in T_{\Lambda_t}\calO_\th$ be a vector field on $\calO_\th$
along $\La$.  In~\cite[Section 4]{S2}, it is shown that the Levi-Civita covariant derivative of $h_t$ in the direction $d\La_t/dt$ is defined by
\begin{equation}\label{eq:lcc}
\frac{Dh_t}{dt} =
\Big(\frac{\del}{\del t}(h_t\circ g_t)+g_t^* dh_t(w_t)\Big)\circ g_t^{-1},
\end{equation}
where $w_t\in\Gamma(L,TL)$ is defined as the unique solution of
\begin{equation}\label{eq:crfl}
\iota_{w_t}g_t^*\Re\big(e^{-\i \th}\O\big)
=
-\iota_{dg_t/dt}\Re\big(e^{-\i \th}\O\big).
\end{equation}
In particular, expression~\eqref{eq:lcc} is independent of the choice of $g_t.$

Another way to think of the covariant derivative is the following. Let $\phi_t : L \to L$ be a family of diffeomorphisms such that
\[
\frac{d\phi_t}{dt} = w_t\circ \phi_t.
\]
Let $\tilde g_t = g_t \circ \phi_t.$ Then
\begin{equation}\label{eq:iotg}
\iota_{d\tilde g_t/dt}\Re\big(e^{-\i \th}\O\big) = 0,
\end{equation}
and consequently
\beq
\label{ReformulateGeodEq}
\frac{Dh_t}{dt} = \frac{\partial}{\partial t} (h_t \circ \tilde g_t).
\eeq

As usual, $\Lambda$ is a geodesic if for $h_t = \frac{d\Lambda_t}{dt}$ we have $\frac{Dh_t}{dt} = 0.$ In other words, $h = h_t \circ \tilde g_t : L \to \R$ is independent of $t.$ Let $p$ be a critical point of $h.$ Combining equation~\eqref{eq:iotg} and equation~\eqref{dgdtEq} with $\tilde g_t$ in place of $g_t$, we conclude that $\frac{d\tilde g_t}{dt}(p) = 0$ for all $t.$ Thus, we obtain the following, which is interesting primarily when $L$ is compact.
\begin{lemma}
\label{ArnoldLemma}
If $\Lambda_0,\Lambda_1 \in \calO_\th$ are joined by a geodesic, then
\[
\#(\Lambda_0 \cap \Lambda_1) \geq \#Crit(h).
\]
Here, $\#$ denotes the unsigned cardinality of a set.
\end{lemma}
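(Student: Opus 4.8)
The plan is to make rigorous the heuristic in the paragraph preceding the statement: I will show that under the reparametrized family $\tilde g_t$ every critical point of $h$ is carried to a point of $X$ that does not depend on $t$ — hence lies in $\Lambda_0\cap\Lambda_1$ — and then observe that this assignment is injective because $\tilde g_0$ is a diffeomorphism.

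First I would record the two constraints on $v_t := d\tilde g_t/dt$, viewed as a vector field along $\tilde g_t$. By \eqref{ReformulateGeodEq}, the geodesic hypothesis says precisely that $h = h_t\circ\tilde g_t\colon L\to\R$ is independent of $t$. Then \eqref{eq:iotg} gives $\iota_{v_t}\Re(e^{-\i\th}\Omega)=0$, while \eqref{dgdtEq} — applied with $\tilde g_t$ in place of $g_t$, which is legitimate since the function $h_t$ representing $d\Lambda_t/dt\in T_{\Lambda_t}\calO_\th$ is independent of the choice of parametrizing diffeomorphisms by \cite[Lemma 2.1]{AS} — gives $\iota_{v_t}\omega = d(h_t\circ\tilde g_t) = dh$.

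Next I would fix $p\in Crit(h)$ and $t\in[0,1]$ and set $q=\tilde g_t(p)\in\Lambda_t$. Since $dh_p=0$, the second constraint yields $\omega(v_t(p),d\tilde g_t(X))=0$ for every $X\in T_pL$; because $d\tilde g_t(T_pL)=T_q\Lambda_t$ is a Lagrangian subspace of $(T_qX,\omega)$, hence its own symplectic orthogonal, this forces $v_t(p)\in T_q\Lambda_t$, say $v_t(p)=d\tilde g_t(Y)$ with $Y\in T_pL$. Substituting into the first constraint and pulling back along $\tilde g_t$ gives $\iota_Y\bigl(\tilde g_t^*\Re(e^{-\i\th}\Omega)\bigr)=0$. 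But $\Lambda_t\in\calO_\th\subset\calL_\th^+$, so $\Re(e^{-\i\th}\Omega)|_{\Lambda_t}$ is a volume form and $\tilde g_t^*\Re(e^{-\i\th}\Omega)$ is a nowhere-vanishing $n$-form on the $n$-manifold $L$; contraction of such a form with a tangent vector vanishes only if the vector is zero, so $Y=0$ and $v_t(p)=0$. As this holds for all $t$, the curve $t\mapsto\tilde g_t(p)$ is constant, and its value $q_p:=\tilde g_0(p)=\tilde g_1(p)$ lies in $\Lambda_0\cap\Lambda_1$. Finally $p\mapsto q_p=\tilde g_0(p)$ is an injection $Crit(h)\hookrightarrow\Lambda_0\cap\Lambda_1$ because $\tilde g_0\colon L\to\Lambda_0$ is a diffeomorphism, and comparing cardinalities gives $\#(\Lambda_0\cap\Lambda_1)\ge\#Crit(h)$.

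I expect the one genuinely substantive step to be the vanishing $v_t(p)=0$, which is where both hypotheses are used: the Lagrangian condition, through self-symplectic-orthogonality, places $v_t(p)$ tangent to $\Lambda_t$, and the positivity $\Lambda_t\in\calL_\th^+$ makes $\Re(e^{-\i\th}\Omega)$ restrict to a nondegenerate top-degree form there, which rules out a nonzero tangential $v_t(p)$. Everything else is bookkeeping; in particular, transversality of $\Lambda_0$ and $\Lambda_1$ plays no role, consistent with the introduction's remark that Lemma~\ref{ArnoldLemma} applies to non-transverse Lagrangians as well.
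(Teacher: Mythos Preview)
Your proof is correct and follows exactly the approach of the paper: the paper's argument is the brief paragraph preceding the lemma, and you have simply supplied the details it leaves implicit (that $\iota_{v_t}\omega=dh$ forces $v_t(p)\in T_q\Lambda_t$ by the Lagrangian condition, and then $\iota_{v_t}\Re(e^{-\i\th}\Omega)=0$ together with positivity kills $v_t(p)$). The final injectivity observation via $\tilde g_0$ is also just the obvious unpacking of what the paper intends.
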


The lemma links the existence of geodesics to the original version of Arnold's influential conjecture on fixed points of Hamiltonian symplectomorphisms~\cite[Appendix 9]{Ar89}.
\begin{conjecture} {\rm (Arnold)}\label{conj:Ar}
Every Hamiltonian symplectomorphism $\phi$ of a compact symplectic manifold $(M,\omega_M)$ has at least as many fixed points as a smooth function $H: M\to \R$ has critical points.
\end{conjecture}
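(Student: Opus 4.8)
The plan is to reduce Conjecture~\ref{conj:Ar} to the existence of a sufficiently regular geodesic of positive Lagrangians and then to invoke Lemma~\ref{ArnoldLemma}. Given a Hamiltonian symplectomorphism $\phi$ of the compact manifold $(M,\omega_M)$, I would pass to the product $M\times M$ equipped with the split symplectic form $\omega_M\oplus(-\omega_M)$, and let $\Delta\subset M\times M$ denote the diagonal and $\Gamma_\phi$ the graph of $\phi$. Then $\Delta\cap\Gamma_\phi$ is in bijection with the fixed point set of $\phi$, and because $\phi$ is Hamiltonian the isotopy carrying $\Delta$ to $\Gamma_\phi$ is exact, so $\Delta$ and $\Gamma_\phi$ lie in a common exact isotopy class. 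When $M$ is Calabi--Yau, $M\times M$ (with the conjugate structure on the second factor) is again Calabi--Yau, with a holomorphic volume form built from $\Omega$, and $\Delta$ is special Lagrangian of some phase $\th$, hence $\Delta\in\calO_\th$; by the criterion $|\theta_{\Gamma_\phi}-\th|<\pi/2$, the graph $\Gamma_\phi$ is also $\th$-positive provided $\phi$ is $C^1$-small, and in general one writes $\phi$ as a composition of small Hamiltonian steps and argues stepwise.

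Having placed $\Lambda_0=\Delta$ and $\Lambda_1=\Gamma_\phi$ in the same $\calO_\th$, I would invoke the existence theory for geodesics developed in this article --- the model case being positive graph Lagrangians in $\C^n$ (Theorem~\ref{MainExistenceThm}), extended to the Calabi--Yau setting by way of the geometric reformulation of the DSL (Appendix~\ref{sec:gdsl}) --- to produce a geodesic $\Lambda:[0,1]\to\calO_\th$ from $\Lambda_0$ to $\Lambda_1$, together with the function $h:L\to\R$ attached to it in Section~\ref{LeviCivitaSubSec}, where $L\cong\Delta\cong M$ is compact. If the geodesic is regular enough that the family $\tilde g_t$ of~\eqref{eq:iotg} is well-defined and $\frac{d\tilde g_t}{dt}(p)=0$ holds pointwise at critical points $p$ of $h$ --- i.e.\ ``sufficiently regular'' in the sense of Lemma~\ref{ArnoldLemma} --- then that lemma applies and yields $\#(\mathrm{Fix}\,\phi)=\#(\Delta\cap\Gamma_\phi)\ge\#\mathrm{Crit}(h)$.

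The remaining step is purely topological and immediate: $h$ is a genuine smooth function on the compact manifold $M$, so it has at least $\mathrm{crit}(M)$ critical points, where $\mathrm{crit}(M)$ denotes the minimum of $\#\mathrm{Crit}(H)$ over all smooth $H:M\to\R$ (bounded below by the Lusternik--Schnirelmann category, hence by the cup-length plus one). Chaining these inequalities gives $\#(\mathrm{Fix}\,\phi)\ge\#\mathrm{Crit}(H)$ for every smooth $H$, which is the assertion of the conjecture.

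The decisive obstacle is the regularity of the geodesic. The solutions constructed in this paper are weak solutions of the DSL known only to be continuous on $\overline{\calD}$, Lipschitz in $t$, and --- in the outermost branches --- $C^{0,1}(\calD)$; this is far short of the regularity under which the pointwise argument in Lemma~\ref{ArnoldLemma} is valid. A complete proof therefore requires either markedly stronger a priori estimates for weak geodesics or a version of Lemma~\ref{ArnoldLemma} that tolerates merely Lipschitz geodesics, e.g.\ through an appropriate notion of critical point for the associated measurable generating function, together with a count of such critical points in terms of the topology of $M$. A second, independent obstacle is that the whole apparatus here is built for $X=\C^n$ and, in Appendix~\ref{sec:gdsl}, for Calabi--Yau $X$, whereas the conjecture concerns arbitrary compact symplectic $(M,\omega_M)$, for which no canonical holomorphic volume form $\Omega$ exists; overcoming this would require extending the theory of positive Lagrangians and of the DSL beyond the Calabi--Yau category. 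Accordingly, what the present article secures is the reduction above together with the conditional Lemma~\ref{ArnoldLemma}, so Conjecture~\ref{conj:Ar} itself remains open pending these developments.
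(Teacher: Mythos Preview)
The statement you were asked to prove is Arnold's \emph{conjecture}, and the paper does not prove it; it is stated as an open conjecture and followed only by a discussion of how the paper's results bear on it. Your proposal correctly recognizes this: you lay out the standard reduction to Lagrangian intersections in $M\times M$ with the diagonal and the graph of $\phi$, invoke Lemma~\ref{ArnoldLemma} conditionally, and then explicitly flag that the regularity of weak DSL geodesics is insufficient and that the Calabi--Yau hypothesis is restrictive, concluding that the conjecture remains open. This is precisely the paper's position (see the paragraphs following Conjecture~\ref{conj:Ar} and the remark after Lemma~\ref{ArnoldLemma}), so your assessment is accurate and aligned with the paper.

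One minor point: your suggestion to handle a general Hamiltonian $\phi$ by writing it as a composition of $C^1$-small steps and arguing ``stepwise'' does not obviously work, since Lemma~\ref{ArnoldLemma} compares only the two endpoints of a single geodesic and gives no mechanism for concatenating intersection bounds across steps. The paper does not attempt this either; it simply notes that positivity of $\Gamma_\phi$ is a genuine constraint on $\phi$ that holds for $\phi$ close to the identity but not in general.
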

This conjecture can be rephrased in terms of Lagrangian intersections. Indeed, consider $X = M \times M$ with projections $p_1,p_2 : X \to M$ and $\omega = -p_1^*\omega_M + p_2^*\omega_M.$ Take $\Lambda_0$ to be the diagonal, and $\Lambda_1 = (\id \times \phi)(\Lambda_0).$ Then $\# \Lambda_0 \cap \Lambda_1$ is the number of fixed points of $\phi.$ If $M$ is Calabi-Yau of dimension $m$ with complex structure $J_M$ and holomorphic $(m,0)$-form $\Omega_M,$ we equip $X$ with the complex structure $-J_M\oplus J_M$ and holomorphic $(n,0)$-form $p_1^*\overline\Omega_M \wedge p_2^*\Omega_M.$ Then the diagonal $\Lambda_0$ is a special Lagrangian and thus positive. Positivity of $\Lambda_1$ translates to a subtle condition on $\phi,$ which certainly holds if $\phi$ is $C^1$-close to the identity, but does not imply $\phi$ is close to the identity.

Arnold's Conjecture~\ref{conj:Ar} can be interpreted in two ways. First, we can assume $\phi$ has non-degenerate fixed points, and compare the number of fixed points with the critical points of a Morse function on $M.$ Second, we can consider $\phi$ with possibly degenerate critical points, and compare the number of fixed points with critical points of an arbitrary function on $M.$ In the first case, the conjectured lower bound is larger, but the second case is more general.

Currently, most results on the Arnold conjecture concern one of two weak versions. The main tool is Floer homology. In the first weak version~\cite{Fl88,Fl89,Fu99,HS95,LT98}, the symplectomorphism $\phi$ is assumed to have non-degenerate fixed points and the number of critical points of $H$ is
replaced with $\sum_{i=0}^n\dim H_i(M).$ In the second weak version~\cite{Fl89,Fl89a,Ho88,LO96},
the symplectomorphism $\phi$ may have degenerate fixed points, but the number of critical points of $H$ must
be replaced with the cup-length of $M.$ Recent work~\cite{Ba14,OP14} relates the number of fixed points of $\phi$ to $\pi_1(M)$ if the fixed points of $\phi$ are non-degenerate. Under certain assumptions, the second version of the original conjecture has been proven by Rudyak~\cite{Ru99}.
By Lemma~\ref{ArnoldLemma}, existence of geodesics would yield results on both versions of the original conjecture.

The problem of Lagrangian intersections has also been considered widely starting with Arnold himself~\cite{Ar89}. Floer's first paper on Floer homology~\cite{Fl88} concerned the Lagrangian version of Arnold's conjecture. However, the general Lagrangian intersection problem is considerably more subtle as $J$-holomorphic disks with boundary in the Lagrangian give rise to obstructions to defining Floer homology~\cite{Fu09}. Moreover, even if Lagrangian Floer homology is defined, it may not be isomorphic to the singular homology of the Lagrangian.

\subsection{Geodesics of graph Lagrangians}\label{sec:graphs}

Consider $X=\C^n$ with the standard Euclidean symplectic
form
\[
\o=\frac{\i}{2} \sum_j dz_j\w d\bar z_j= \sum_j dx_j\w dy_j.
\]
Identify $\C^n$ with
$\RR^n\oplus \i \RR^n$ and $L$ with $\RR^n\times\{0\}\subset \C^n$.
Consider a path $\Lambda$ of Lagrangian graphs of the form
$\La_t=\h{graph}(d_xk(t,\,\cdot\,))$ for $k\in C^2([0,1]\times\RR^n)$ constant in $t$ outside a compact set. Denote by $h_t$ the vector field along $\Lambda$ given by $h_t = d\La_t/dt.$ Take
$g_t(x)=(x,d_xk(t,x))$, so
$$
\frac{dg_t}{dt}=\sum_{i=1}^n\frac{\del^2 k}{\del t \del x_i}\frac{\del}{\del y_i}\Big|_{g_t(x)}.
$$
Thus $\iota_{dg_t/dt}\o=-d_x\dot k(t,\cdot)$, where
$\dot k=\del_t k$, and by \eqref{dgdtEq} the vector field $h_t$ is given by
\[
h_t\circ g_t(x)=-\dot k(t,x).
\]

Recalling the definition of the interior product along a map
\eqref{IntProdEq},
$$
\baeq
\iota_{dg_t/dt}\O
&=
\sum_{i=1}^n
\O\bigg(dg_t/dt, dg_t\Big(\frac{\del}{\del x_1}\w\cdots
\w\widehat{\frac{\del}{\del x_i}}\w\cdots\w \frac{\del}{\del x_n}\Big)\bigg)
dx_1\w\cdots \w\widehat{dx_i}\w\cdots\w dx_n
\cr
&=
\sum_{i=1}^n
\det B_i
dx_1\w\cdots \w\widehat{dx_i}\w\cdots\w dx_n,
\eaeq
$$
where $B_i, i=0,\ldots,n$, is the $n$-by-$n$ matrix obtained by removing the
$(i+1)$-th column from the $n$-by-$(n+1)$ matrix
$$
B=
\left(\begin{array}{c|ccc}
& & &\cr
\sqrt{-1}\partial_t \nabla_x k & &I+\i\nabla^2_xk&\cr
&&&
\end{array}\right).
$$
Next, denoting $\del_x k=(k_1,\ldots,k_n)$, we have
\begin{align}
\label{CndzEq}
g_t^*\O &=d(x+\i k_1)\w\cdots\w d(x+\i k_n)\\ &=\det[I+\i\nabla^2_xk] \;dx_1\w\cdots\w dx_n, \notag\\
&=\det B_0 \; dx_1\w\cdots\w dx_n. \notag
\end{align}
As $\La_t\in\calO_\th$, we have $\Re \big(e^{-\i \th}\det B_0\big)>0$. Now, set $w_t = \sum_{i=1}^n a^i(t,x)\frac{\del}{\del x_i}$. So,
$$
\baeq
\iota_{w_t}\Re\big( e^{-\i \th}g_t^*\O\big)
&=
\iota_{w_t}\Re \big( e^{-\i \th}\det[I+\i\nabla^2_xk]\big)dx_1\w\cdots\w dx_n
\cr
&=\sum_{i=1}^n (-1)^i a^i(t,x)\Re \big(e^{-\i \th}\det B_0\big) dx_1\w\cdots \w\widehat{dx_i}\w\cdots\w dx_n,
\eaeq
$$
Comparing with equation~\eqref{eq:crfl}, we obtain
$$
a^i= - (-1)^i \frac{\Re \big(e^{-\i \th}\det B_i\big)}{\Re\big( e^{-\i \th}\det B_0\big)}.
$$
Thus, the geodesic equation becomes
$$
-\ddot k-\sum_{i=1}^n (-1)^i
\frac{\Re\big( e^{-\i \th}\det B_i\big)}{\Re\big( e^{-\i \th}\det B_0\big)} \del_{x_i} \dot k=0,
$$
or
$$
\ddot k\Re (e^{-\i \th}\det B_0)+\sum_{i=1}^n (-1)^i {\Re (e^{-\i \th}\det B_i)} \del_{x_i} \dot k
=
\Im e^{-\i \th}\det [I_n+\i\nabla^2 k]=
0.
$$
Here, in the last step, $I_n$ is the $(n+1)$-by-$(n+1)$ matrix
$\diag(0,1,\ldots,1)$ and we have replaced $\dot k$ by $\i\dot k$ and $\Re$
by $\Im$.

In summary, we have shown the following.

\bprop
\label{SPLgeodEq}
Let $\th\in(-\pi,\pi]$ and let $k_i\in C^2(\RR^n), i=0,1,$ be such that
$\graph(dk_i)\subset\C^n$ are elements
of $\calO_\th$. Let $k\in C^2([0,1]\times\RR^n)$ be such that
$\graph(d_xk(t,\,\cdot\,))\subset\C^n$ is an element of $\calO_\th$
for each $t\in[0,1]$.
Then $t\mapsto \graph(d_xk(t,\,\cdot\,))$
is a geodesic in $(\calO_\th,(\,\cdot\,,\,\cdot\,))$
with endpoints $\graph(dk_i), i=0,1$, if and only if $k$ satisfies
\begin{gather}
\label{DSLEq}
\Im \left( e^{-\i \th} \det (I_n + \i\nabla^2 k)\right) = 0,  \qquad \Re \left(e^{-\i \th} \det \left(I + \i\nabla_x^2 k\right)\right) > 0,
\cr
k(0,\,\cdot\,)=k_0+c,\qquad k(1,\,\cdot\,)=k_1+c,
\end{gather}
for a constant $c \in \R.$
\eprop

\section{The space-time Lagrangian angle}
\label{DDSec}

As shown in the previous section,
the degenerate special
Lagrangian equation (DSL)
governs $C^2$ geodesics in $(\calO_\th,(\,\cdot\,,\,\cdot\,))$.
Our goal in the next few sections
is to understand some of
the basic analytic properties of this equation.

Let $\tan^{-1}$ denote the branch of
the inverse to $\tan$ with image in $(-\pi/2,\pi/2),$
and let $\arg$ denote the branch of the argument function with image in $(-\pi,\pi].$
Then $\tan^{-1}\la:=\arg(1+\i\la)$,
for $\la\in \R$.
For a matrix $A\in\Symn$, denote by
$\la_1(A),\ldots,\la_n(A)$ its (real)
eigenvalues, with associated eigenvectors
$v_1(A),\ldots,v_n(A)$. Denote by
$\tan^{-1}A$ the matrix
whose eigenvalues are $\tan^{-1}\la_j(A)$
with associated eigenvectors $v_j(A),\, j=1,\ldots,n.$
The eigenvalues of $I+\i A$ are
$1+\i\la_j(A), j=1,\ldots,n$. This is because $I$ and $A$ are simultaneously diagonalizable. Therefore, all the eigenvalues of $I+\i A$ lie in a line in $\CC$ that is strictly contained in the right half space, and we can define $\arg(I + \i A)$ as we defined $\tan^{-1}(A)$ and $\arg(I + \i A) = \tan^{-1}(A).$ Moreover, $\tan^{-1}(A) = \arg(I+\i A)$ is a well-defined real-analytic matrix-valued function of $A$ \cite[p. 44]{Kato}. This observation is the basis for Harvey--Lawson's~\cite{HL} study of the special Lagrangian (SL) equation
in $\CC^n$.

The Lagrangian angle $\th_u:\RR^n\ra S^1$
of the Lagrangian
$\graph(\nabla u)$ for $u\in C^2(\RR^n)$ is given by
\begin{equation}\label{eq:LA}
\theta_u = \arg \det(I + \i\nabla^2 u).
\end{equation}
It can be lifted to the function $\tilde \th_u:\RR^n\ra \RR$
by the explicit formula
\beq
\label{liftedSLangleEq}
\tilde\th_u(x):=\tr\arg(I + \i \nabla^2u(x)) = \tr\tan^{-1}(\nabla^2u(x)).
\eeq
Solutions of the SL are thus
equivalent to solutions of $\tilde\th_u=c$
for $c=\th \, \mod\, 2\pi$.
Each of the possible choices of $c = \theta \mod 2\pi$ defines a {\it branch}
of the SL of angle $\theta.$
Our goal in \S\ref{s-tLagSubSec} is to associate to each function $k \in C^2(\R^{n+1})$ an angle $\Th_k : \R^{n+1} \to S^1$ associated with the DSL equation, and construct a lift $\wtThk : \R^{n+1} \to \R.$ Unlike $\theta_u,\tilde\theta_u,$ the angle $\Th_k$ is not defined at critical points of $\dot k:=\del_t k$, and the lift $\wtThk$ is only upper semi-continuous. As in the case of SL, the lift $\wtThk$ gives rise to distinct branches of DSL. Appendix~\ref{LimitDSet} derives an alternative formula for the
lifted space-time Lagrangian angle $\wtTh_k$.

\subsection{The space-time Lagrangian angle and its lift}
\label{s-tLagSubSec}

Let $D\subset\RR^n$. Given $k\in C^2([0,1]\times D)$,
we define the {\it space-time Lagrangian angle}
of $k$ by
\[
\Th_k(t,x) = \arg\det(I_n + \nabla^2 k(t,x)) \in S^1,
\]
for $(t,x)$ such that $\det(I_n + \nabla^2 k(t,x))\neq 0.$ Lemma~\ref{RLemma} below shows that $\det(I_n + \nabla^2 k(t,x)) = 0$ if and only if $(t,x)$ is a critical point of $\dot k.$

In order to construct a subequation associated to DSL, we need to lift $\Th_k$ to a continuous real valued function and extend it to be upper semi-continuous over the critical points of $\dot k.$ To state the key technical result that constructs such a lift, we set the following notation.
For
$$
A=[a_{ij}]_{i,j=0}^n\in \Symnplusone,
$$
let
\beq
\label{AplusEq}
A^+:=[a_{ij}]_{i,j=1}^n\in \Symn,
\eeq
and
\beq
\label{avecEq}
\vec a_0:=(a_{01},\ldots,a_{0n})\in \RR^n.
\eeq
Write
\beq
\label{calSEq}
\calS:=\{A\in \Symnplusone\,:\, A=\diag(0,A^+)\}.
\eeq
For $B \in \Sym(\CC^m),$ denote by $\spec(B)$ the set of its eigenvalues, and for $\lambda \in \spec(B),$ denote by $m(\lambda)$ the multiplicity of $\lambda$ as a root of the characteristic polynomial of $B.$ Let
\begin{equation}\label{eq:whTh}
\widehat\Theta : \Symnplusone\sm\calS \to \RR, \qquad \qquad
\widehat \Theta(A) = \sum_{\lambda \in \spec(I_n + \i A)} m(\lambda)\arg(\lambda),
\end{equation}
and let $\wtTh,\underline\wtTh: \Symnplusone \to \RR$ be given by
\begin{gather*}
\wtTh(A) = \underline\wtTh(A) = \widehat\Theta(A), \qquad A \in \Symnplusone\sm\calS, \\
\wtTh(A) = \pi/2 + \tr\arg(I+\i A^+), \qquad \underline \wtTh(A) = -\pi/2 + \tr\arg(I + \i A^+), \qquad A\in\calS.
\end{gather*}
\bthm
\lb{USCThm}
The function $\widehat\Theta$ is well-defined and differentiable. Moreover, $\wtTh$ (resp. $\underline\wtTh$) is the smallest upper semi-continuous (resp. largest lower semi-continuous) function on $\Symnplusone$ extending $\widehat\Theta.$
\ethm

Consequently, we make the following definition.

\bdefin
Let $k\in C^2([0,1]\times D)$.
The \emph{regularized lift} of the space-time Lagrangian angle~$\Theta_k$
is the upper semi-continuous function
$$
\wtThk(t,x):=\wtTh(\nabla^2k(t,x))
\in (-(n+1)\pi/2,(n+1)\pi/2).
$$

\edefin

Subsections \ref{ArgSubSec}--\ref{USCSubSec} are devoted
to the proof of Theorem \ref{USCThm}.

\subsection{The argument of certain matrices}
\label{ArgSubSec}

We recall the following observation (cf. \cite[p. 94]{HL}
for the case $\delta=1$).

\blem
\lb{HLPosLemma}
Let $C\in\Sym(\RR^m)$ and $\delta>0$.
Then
$\Re\big((\delta I+\i C)^{-1}\big)$ is positive definite
\elem

Indeed, if $O\in O(m)$ diagonalizes $C$ so that
$C=O^T\diag(\la_1(C),\ldots,\la_m(C))O$,
then
\beq\label{RePartInvEq}
\Re\big((\delta I+\i C)^{-1}\big)
=
O^{T}\diag\bigg(
\frac\delta{\delta^2 +\lambda^2_1(C)},\ldots,
\frac\delta{\delta^2+\lambda^2_m(C)}\bigg)O.
\eeq

For the remainder of this subsection we let $A\in \Symnplusone$.
Also we will denote by $B$ the matrix
$$
B:=I_n+\i A\in \Sym(\C^{n+1}),
$$
and define $B^+$ and $\vec b_0$ in a manner similar
to Equations \eqref{AplusEq}--\eqref{avecEq}.

We would like to define the argument of matrices of the form
$I_n+\i A$. Clearly, $I_n$ and $A$ are not, in general, simultaneously
diagonalizable. Thus, the discussion at the beginning of Section~\ref{DDSec} does not
apply. To overcome this difficulty we start with the following observation. Write
\begin{equation}\label{eq:Ine}
I^\eta_n:=\diag(\eta,1,\ldots,1)\in\Sym(\R^{n+1}).
\end{equation}

\blem
\label{RealPartEVLemma}
Let $A\in \Symnplusone$ and $B=I_n^\eta +\i A\in \Sym(\C^{n+1})$ for $\eta \geq 0.$
Then the eigenvalues $\{\la_i\}_{i=0}^n$ of $B$ satisfy
$\Re\la_i\ge 0$. If $\vec a_0\neq0$ or $\eta > 0$,
then $\Re\la_i> 0$.
\elem

\bpf
Consider
$$
D=[d_{ij}]_{i,j}:=I_n^\eta+\i A+\delta I - \gamma \i I.
$$
The first claim follows if we can show that $D$
is nonsingular for each $\delta>0$ and $\gamma\in\RR$.

For $C=[c_{ij}]_{i,j=0}^n\in \Sym(\C^{n+1})$ with
$C^+$ invertible,
\beq
\label{DetFormulaEq}
\det C
=
\det C^
+
\big(
c_{00}-\langle \vec c_0, (C^+)^{-1}\vec c_0\rangle
\big).
\eeq
We apply this to $D$ as follows.
First, $D^+=(1+\delta)I+\i(A^{+}-\gamma I)$
is invertible since its eigenvalues
are $1+\delta + \i (\la_i(A^+)-\gamma)\neq0$,
as $\delta>0$. Thus $\det D\neq0$ iff
$
d_{00}-\langle \vec d_0, (D^+)^{-1}\vec d_0\rangle\neq0.
$
Now,
$$
\Re \left(d_{00}-\langle \vec d_0, (D^+)^{-1}\vec d_0\rangle\right) = \delta+\eta +
\langle \vec a_0, \Re\big((D^+)^{-1}\big)\vec a_0\rangle,
$$
which is positive by Lemma \ref{HLPosLemma}.
This proves the first statement.
Whenever $\vec a_0\neq0$ or $\eta >0$, positivity persists for $\delta=0$,
proving the second statement.
\epf

\bcor\label{cor:argB}
Suppose that $A\neq \diag(0,A^+)$. There exists a
closed simply-connected smooth
contour $\gamma$ entirely contained in
$\CC\sm\RR_{\leq 0}$
enclosing all the eigenvalues of $B=I_n+\i A$.
The function
$$
\arg(B):=\frac1{2\pi\i}
\int_\gamma (\zeta I-B)^{-1}\arg\zeta d\zeta
$$
is then well-defined independently of the choice of such
a contour $\gamma$. Moreover, it is a differentiable
function of $A\in\Symnplusone$ whenever $A\neq \diag(0,A^+)$.
\ecor
\bpf
Whenever $\vec a_0\neq0$,  Lemma \ref{RealPartEVLemma}
implies the existence of a contour $\gamma$ as in the statement
since then the eigenvalues of $B$ are contained in
$\RR_{>0}\times\i\RR
\subset
\RR_{\geq 0}\times\i\RR\sm\{(0,0)\}$.
If $\vec a_0=0$ but $A\neq \diag(0,A^+)$  then
$a_{00}\neq0$ and $A=\diag(a_{00},A^+)$.
Thus $\i a_{00}$ is an eigenvalue of $B$
and the remaining eigenvalues are in
$\{1\}\times\i\RR$. Thus, once again,
all the eigenvalues of $B$ are contained in
$\RR_{\geq 0}\times\i\RR\sm\{(0,0)\}$.
In conclusion, whenever $A\neq\diag(0,A^+)$, the
Dunford--Taylor integral in the statement is well-defined
independently of $\gamma$ since the branch of the argument function
with values in $(-\pi,\pi]$ is smooth away from the slit $\RR_{\leq 0}$.
Differentiability as a function of $A$ follows from  differentiation
under the integral sign.
\epf

On the other hand, by \cite[p. 45]{Kato} the eigenvalues of $\arg(B)$ are the arguments of the eigenvalues of $B$ with corresponding multiplicities. In particular,
\begin{equation*}\label{eq:trsumeig}
\sum_{\lambda \in \spec(I_n + \i A)} m(\lambda)\arg(\lambda) =  \tr \arg(I_n + \i A).
\end{equation*}
So, Corollary~\ref{cor:argB} proves the first part of
Theorem~\ref{USCThm}. That is, $\widehat\Theta$ is well-defined and differentiable.

\subsection{Upper semi-continuity of the lifted
space-time Lagrangian angle}
\label{USCSubSec}

The purpose of this subsection is to complete the proof
of Theorem \ref{USCThm}, namely,
to show that $\wtTh$ (resp. $\underline\wtTh$)
is the minimal usc (resp. maximal lsc) extension of
$\widehat\Theta$ from $\Symnplusone\sm\calS$ to $\Symnplusone$. We treat only $\wtTh$. The argument for $\underline \wtTh$ is analogous.

Indeed, minimality is
immediate since $\diag(\eps,A^+)\not\in\calS$ and for $\eps>0$,
$$
\tr\arg[I_n+\i\diag(\eps,A^+)]
=\pi/2+\tr\arg[I+\i A^+].
$$
We now turn to establishing the upper semi-continuity.
Let $A_i\ra A\in\calS$ with $A_i\not\in\calS$.
Then the eigenvalues
of $I_n+\i A_i$ converge to those of
$I_n+\i A=\diag(0,I+\i A^+)$, i.e.,
to $0,1+\i\la_1(A^+),\ldots,1+\i\la_n(A^+)$.
Therefore, according to \cite[p. 45]{Kato}, $n$ of  the eigenvalues
of $\arg(I_n+\i A_i)$ converge to
$\arg(1+\i\la_1(A^+)),\ldots,\arg(1+\i\la_n(A^+))$.
The remaining eigenvalue is $\arg \delta_i$,
with $\delta_i\in\CC$ in the right half space
by Lemma \ref{RealPartEVLemma},
and with $\delta_i$ converging to $0\in\CC$.
It follows
that $\limsup\arg\delta_i\le\pi/2$ and that
$$
\limsup_i\tr\arg(I_n+\i A_i)\le\pi/2+\tr\arg[I+\i A^+].
$$
This concludes the proof of Theorem \ref{USCThm}.

\subsection{Bounds on the space-time angle and its lift}
It will be useful to compare the space-time Lagrangian angle with the usual Lagrangian angle of space-like slices, and similarly for their respective lifts. For this a pointwise analysis suffices, so we frame our discussion in terms of functions of a symmetric matrix $A.$ We apply these results by taking $A$ the Hessian of a function. Let
\[
\theta: \Symn \to S^1, \qquad \Theta:\Symnplusone\sm\calS \to S^1,
\]
be given by
\[
\theta(A) = \arg \det(I + \i A), \qquad \Theta(A) = \arg\det(I_n + \i A).
\]
We consider $S^1$ as an abelian group and use additive notation for the group law and the inverse.
\begin{lemma}\label{lm:Thth}
For all $A \in \Symnplusone\sm\calS,$ we have
\[
\Theta(A) - \theta(A^+) \in [-\pi/2,\pi/2] \subset S^1.
\]
\end{lemma}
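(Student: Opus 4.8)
The plan is to exploit the block structure of $B := I_n + \i A$, where the $(0,0)$-block is $\i a_{00}$, the $(0,j)$-entries are $\i \vec a_0$, and the lower-right $n\times n$ block is $B^+ = I + \i A^+$. The key point is that $\det B = \det B^+ \cdot \big(\i a_{00} - \langle \vec a_0, (B^+)^{-1}\vec a_0\rangle\big)$ by the Schur-complement identity~\eqref{DetFormulaEq} (valid since $B^+$ is invertible, its eigenvalues being $1 + \i\la_j(A^+)$). Taking arguments, $\Theta(A) = \theta(A^+) + \arg\big(\i a_{00} - \langle \vec a_0, (B^+)^{-1}\vec a_0\rangle\big)$ in $S^1$, so the lemma reduces to showing that the complex number $w := \i a_{00} - \langle \vec a_0, (B^+)^{-1}\vec a_0\rangle$ has $\arg w \in [-\pi/2, \pi/2]$, i.e. $\Re w \ge 0$.

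First I would compute $\Re w = -\langle \vec a_0, \Re\big((B^+)^{-1}\big)\vec a_0\rangle$, since $\Re(\i a_{00}) = 0$. Now $B^+ = I + \i A^+$ is of the form $\delta I + \i C$ with $\delta = 1 > 0$ and $C = A^+$ symmetric real, so Lemma~\ref{HLPosLemma} applies: $\Re\big((B^+)^{-1}\big)$ is positive definite. Hence $\langle \vec a_0, \Re\big((B^+)^{-1}\big)\vec a_0\rangle \ge 0$, which gives $\Re w \le 0$ — the wrong sign! This tells me I should track signs more carefully: in fact $w$ lies in the \emph{closed left} half-plane, so $\arg w \in [\pi/2, 3\pi/2]$, and one must then check how this interacts with the specific branch of $\arg$ with values in $(-\pi,\pi]$ and with the additive $S^1$ convention. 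The resolution is that the relevant quantity in $S^1$ is $\Theta(A) - \theta(A^+) = \arg(w) - \arg(\det B^+) + \arg(\det B^+ \cdot w)$... more cleanly: one should note $\arg\det B$ and $\arg\det B^+$ are each computed by summing arguments of eigenvalues lying in the open right half-plane, each in $(-\pi/2,\pi/2)$, so the honest comparison is between the lifted real-valued angles, and the difference of the \emph{lifts} is exactly $\arg w$ reduced appropriately. I would instead argue directly with the lift: $\wtTh(A) - [\pi/2 + \tr\arg(I+\i A^+)]$ measures how the extra eigenvalue's argument sits relative to $\pi/2$, and since that eigenvalue has nonnegative real part (Lemma~\ref{RealPartEVLemma}), its argument lies in $[-\pi/2,\pi/2]$; combined with $\wtTh = \widehat\Theta$ off $\calS$ and the relation $\widehat\Theta(A) = \tr\arg(I_n + \i A)$, comparing the sum of $n+1$ eigenvalue-arguments (one of which is within $[-\pi/2,\pi/2]$ of the "missing" value $\pi/2$ would be the wrong framing)—the correct statement is that $\tr\arg(I_n+\i A) - \tr\arg(I+\i A^+)$ equals the argument of the single "extra" eigenvalue $\delta$ of $I_n + \i A$ which, by Lemma~\ref{RealPartEVLemma}, has $\Re\delta \ge 0$ and hence $\arg\delta \in [-\pi/2,\pi/2]$.

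The main obstacle, then, is not analytic but bookkeeping: making rigorous the claim that $\tr\arg(I_n + \i A) = \arg\delta + \tr\arg(B^+)$ for the appropriate eigenvalue $\delta$ of $I_n + \i A$. This requires an eigenvalue-continuity argument: deform $A$ within $\Symnplusone\sm\calS$ (for instance along $I_n^\eta$ with $\eta \downarrow 0$, or by scaling $\vec a_0$) to a matrix where the block structure is literally block-diagonal, track the $n+1$ eigenvalues of $I_n + \i A$ continuously as in the proof of Theorem~\ref{USCThm}, identify $n$ of them as limiting to the eigenvalues of $B^+ = I + \i A^+$ and the last as a genuine extra eigenvalue $\delta$ with $\Re\delta \ge 0$ throughout the deformation (by Lemma~\ref{RealPartEVLemma}), and invoke \cite[p.~45]{Kato} to get $\tr\arg = \sum m(\lambda)\arg\lambda$ over this eigenvalue bookkeeping. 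Once that decomposition is in hand, $\arg\delta \in [-\pi/2,\pi/2]$ is immediate and the lemma follows. I expect the deformation/continuity step to be the only place requiring care, and it can likely be quoted almost verbatim from the argument already given in Subsection~\ref{USCSubSec}.
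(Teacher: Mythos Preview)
Your initial approach via the Schur complement is exactly the paper's proof, and it works---you simply made a sign error. When you apply formula~\eqref{DetFormulaEq} to $C = B = I_n + \i A$, the off-diagonal vector is $\vec c_0 = \vec b_0 = \i\vec a_0$, not $\vec a_0$. Since the pairing $\langle\cdot,\cdot\rangle$ in~\eqref{DetFormulaEq} is complex-bilinear (as you can verify from its use in the proof of Lemma~\ref{RealPartEVLemma}), one has
\[
\langle \vec b_0, (B^+)^{-1}\vec b_0\rangle = \langle \i\vec a_0, (B^+)^{-1}\i\vec a_0\rangle = -\langle \vec a_0, (B^+)^{-1}\vec a_0\rangle,
\]
so the Schur complement is $w = \i a_{00} + \langle \vec a_0, (B^+)^{-1}\vec a_0\rangle$, with a plus sign. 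Then $\Re w = \langle \vec a_0, \Re((B^+)^{-1})\vec a_0\rangle \geq 0$ by Lemma~\ref{HLPosLemma}, giving $\arg w \in [-\pi/2,\pi/2]$ directly. That is the entire proof.

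Your fallback eigenvalue-tracking argument, by contrast, has a genuine gap. You assert a decomposition $\tr\arg(I_n+\i A) = \arg\delta + \tr\arg(I+\i A^+)$ with $\delta$ a single ``extra'' eigenvalue of $I_n+\i A$, but no such decomposition holds: for generic $A$ the $n+1$ eigenvalues of $I_n+\i A$ bear no simple relation to the $n$ eigenvalues of $I+\i A^+$. A deformation (say scaling $\vec a_0 \to 0$) makes the splitting hold only at the \emph{endpoint}; along the path all $n+1$ eigenvalues move, and the $n$ eigenvalues that eventually converge to those of $I+\i A^+$ do not equal them en route, so their argument-sum is not $\tr\arg(I+\i A^+)$ at intermediate times. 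Thus the deformation gives no control over $\widehat\Theta(A) - \tilde\theta(A^+)$ away from the endpoint. (The paper does use a continuity argument of this flavor, but only for the \emph{lifted} statement Lemma~\ref{lm:tThth}, and it bootstraps from the $S^1$-valued Lemma~\ref{lm:Thth} already proved by the Schur-complement method.)
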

\begin{proof}
Formula~\eqref{DetFormulaEq} implies
\begin{equation}\label{eq:Thth}
\Theta(A) - \theta(A^+) = \arg\left(\i a_{00} + \left\langle \vec a_0,(I + \i A^+)^{-1}\vec a_0\right\rangle\right).
\end{equation}
But
\[
\Re\left(\i a_{00} + \left\langle \vec a_0,(I + \i A^+)^{-1}\vec a_0\right\rangle\right) = \left\langle \vec a_0, \Re(I + \i A^+)^{-1}\vec a_0\right\rangle \geq 0
\]
by Lemma~\ref{HLPosLemma}. So the right-hand side of equation~\eqref{eq:Thth} must belong to $[-\pi/2,\pi/2].$
\end{proof}
Let $\tilde \theta : \Symn \to \R$ be given by
\begin{equation}\label{eq:thetat}
\tilde\theta(A) = \tr\arg(I + \i A) =  \tr\tan^{-1}(A).
\end{equation}
\begin{lemma}\label{lm:tThth}
For all $A \in \Symnplusone,$ we have
\begin{equation}\label{eq:tThth}
\left|\wtTh(A) - \tilde\theta(A^+)\right| \leq \pi/2, \qquad \left|\underline\wtTh(A) - \tilde\theta(A^+)\right| \leq \pi/2.
\end{equation}
\end{lemma}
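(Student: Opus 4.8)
The plan is to split into the cases $A\in\calS$ and $A\in\Symnplusone\sm\calS$. The first is immediate from the definitions; the second is the substance, and it amounts to upgrading the $S^1$-valued estimate of Lemma~\ref{lm:Thth} to a genuine inequality in $\RR$.

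For $A\in\calS$ I would simply observe that, by the definition of $\tilde\theta$ in \eqref{eq:thetat} and the defining formulas for $\wtTh,\underline\wtTh$ on $\calS$, one has $\wtTh(A)-\tilde\theta(A^+)=\pi/2$ and $\underline\wtTh(A)-\tilde\theta(A^+)=-\pi/2$, so \eqref{eq:tThth} holds with equality. For $A\in\Symnplusone\sm\calS$ one has $\wtTh(A)=\underline\wtTh(A)=\widehat\Theta(A)$, so both inequalities in \eqref{eq:tThth} collapse to the single claim
\[
\bigl|\widehat\Theta(A)-\tilde\theta(A^+)\bigr|\le\pi/2,\qquad A\in\Symnplusone\sm\calS.
\]

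To establish this I would set $f(A):=\widehat\Theta(A)-\tilde\theta(A^+)$. Since $\det(I_n+\i A)=\prod_\lambda\lambda^{m(\lambda)}$ and $\det(I+\i A^+)=\prod_j(1+\i\lambda_j(A^+))$, reducing modulo $2\pi$ gives $\widehat\Theta(A)\equiv\Theta(A)$ and $\tilde\theta(A^+)\equiv\theta(A^+)$, so $f(A)\equiv\Theta(A)-\theta(A^+)\pmod{2\pi}$ in the notation of Lemma~\ref{lm:Thth}. That lemma then forces $f(A)\in[2\pi k-\pi/2,\,2\pi k+\pi/2]$ for some $k=k(A)\in\Z$; that is, $f$ takes values in the closed set $\bigcup_{k\in\Z}[2\pi k-\pi/2,\,2\pi k+\pi/2]$, whose connected components are exactly these pairwise disjoint intervals. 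On the other hand $f$ is continuous on $\Symnplusone\sm\calS$, because $\widehat\Theta$ is differentiable there by Corollary~\ref{cor:argB} and $A\mapsto\tilde\theta(A^+)$ is real-analytic. As $\calS$ is a linear subspace of $\Symnplusone$ of codimension $n+1\ge 2$, the complement $\Symnplusone\sm\calS$ is connected, so $f$ must map it into a single interval $[2\pi k_0-\pi/2,\,2\pi k_0+\pi/2]$. To pin down $k_0$ I would evaluate at $A=\diag(1,0,\ldots,0)\in\Symnplusone\sm\calS$: then $I_n+\i A=\diag(\i,1,\ldots,1)$ and $A^+=0$, so $f(A)=\arg(\i)+n\arg(1)-\tr\tan^{-1}(0)=\pi/2$. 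Hence $k_0=0$ and $|f|\le\pi/2$ throughout $\Symnplusone\sm\calS$, which finishes the proof.

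The only genuinely non-routine point is this last step --- converting the ``mod $2\pi$'' bound from Lemma~\ref{lm:Thth} into a bound in $\RR$. The naive eigenvalue count used to define the regularized lift only yields $|\widehat\Theta(A)|<(n+1)\pi/2$, which is far too coarse; what does the job is the interplay of continuity, connectedness of $\Symnplusone\sm\calS$ (i.e.\ the codimension count for $\calS$), and one explicit base-point evaluation to select the branch $k_0=0$. The supporting facts --- the congruences $\widehat\Theta\equiv\Theta$, $\tilde\theta\equiv\theta\pmod{2\pi}$ and the computation at $\diag(1,0,\ldots,0)$ --- are routine.
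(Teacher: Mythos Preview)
Your proof is correct and follows essentially the same approach as the paper: both reduce to showing $|\widehat\Theta(A)-\tilde\theta(A^+)|\le\pi/2$ on $\Symnplusone\sm\calS$ by combining the $S^1$-valued bound of Lemma~\ref{lm:Thth} with a continuity/connectedness argument to select the correct $2\pi$-branch. The only cosmetic difference is in how the branch is pinned down: you evaluate at the explicit point $\diag(1,0,\ldots,0)$, while the paper instead sends a path $A_t$ from $A$ into $\calS$ and invokes the semicontinuity of $\wtTh,\underline\wtTh$ at the endpoint (Theorem~\ref{USCThm}) to rule out $|f|\ge\pi$.
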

\begin{proof}
When $A \in \calS$, the definition of $\underline\wtTh,\wtTh,$ gives
\begin{equation}\label{eq:caseS}
\wtTh(A) - \tilde\theta(A^+) = \pi/2, \qquad \tilde\theta(A^+) - \underline\wtTh(A)  = \pi/2,
\end{equation}
which implies the claim. We deduce the case $A \notin \calS$ as follows. Let $\{A_t\}_{t\in[0,1]} \subset \Symnplusone$ be a continuous path with $A_0 = A$ and $A_1 \in \calS$ and $A_t \notin\calS$ for $t < 1.$ Recall that
\[
\theta(A^+) = \tilde\theta(A^+) \mod 2\pi, \qquad \Theta(A) = \wtTh(A) \mod 2 \pi,
\]
and $\widehat\Theta$ is continuous by Theorem~\ref{USCThm}. So, Lemma~\ref{lm:Thth} implies that either~\eqref{eq:tThth} holds with $A = A_t$ for all $t \in [0,1),$ or $|\widehat \Theta(A_t) - \tilde\theta(A_t)|\geq \pi$ for all $t \in [0,1).$ But the latter case is impossible because
\[
\limsup_{t \to 1} \left|\widehat \Theta(A_t)- \tilde\theta(A_t)\right| \leq \max\left\{\wtTh(A_1) - \tilde\theta(A^+_1),\tilde\theta(A^+_1) - \underline\wtTh(A_1) \right\} = \pi/2
\]
by Theorem~\ref{USCThm} and equation~\eqref{eq:caseS}.
\end{proof}

\section{Degenerate ellipticity}
\label{DegEllipSec}

Harvey--Lawson show that SL is elliptic
in the sense that its linearization is an elliptic operator
\cite[Chap. 3, Theorem 2.13]{HL82}. Here we establish the following theorem.

\bthm\label{SymbolThm}
Let $k\in C^2([0,1]\times D)$
be a solution of DSL (\ref{DSLMainEq}).
Away from the critical points of $\dot k,$ the symbol of the linearization of
\beq
\label{DSLOpEq}
u\mapsto
\Im \left( e^{-\i \th} \det (I_n + \i\nabla^2 u)\right)
\eeq
at $k$ is nonnegative with exactly one zero eigenvalue, and its nullspace is spanned by $\nabla \dot k.$ At the critical points of $\dot k,$ the linearization
is nonnegative with exactly one non-zero eigenvalue.
\ethm

\def\cof{\hbox{\rm cof}\,}
The proof of Theorem~\ref{SymbolThm} is given at the end of this section based on the following discussion.
Denote by
$
\cof B
$
the cofactor matrix associated to $B$.
The linearization $L_k$ of \eqref{DSLOpEq} at $k$
is given by
\beq\label{SymbolEq}
\baeq
L_k\psi:
&=
\frac{d}{ds}\Big|_{s=0}
\Big\{
\Im \big(e^{-\i\th}\det [I_n+\i\nabla^2 (k+s\psi)]\big)
\Big\}
\cr
&=
\tr\Big(
\Re\big(e^{-\i\th}\cof(I_n+\i\nabla^2 k)\big)
\nabla^2\psi
\Big).
\eaeq
\eeq
It remains to understand $\Re\big(e^{-\i\th}\cof(I_n+\i\nabla^2 k)\big).$ More generally, we consider
\[
A~\in~\Symnplusone, \qquad B = I_n +\i A.
\]
As usual, we use notation~\eqref{AplusEq}-\eqref{avecEq}. We first prove two general lemmas.
\blem
\label{RLemma}
If $\vec a_0 \not = 0$, then $B$ is invertible.
\elem
\bpf
This is an immediate consequence of Lemma~\ref{RealPartEVLemma}.
\epf

\blem
\label{DegPosLemma}
Suppose that $\vec a_0\neq0$. Then
$\Re B^{-1}$ is positive semi-definite with exactly one zero eigenvalue. The
nullspace is spanned by the first column of $A$.
\elem

\bpf
Taking the imaginary part of the equation
$ B B^{-1} = I$ gives
\beq
\label{ImPartofEq}
A\Re B^{-1}+I_n\Im B^{-1}=0.
\eeq
In particular, $(a_{00},\vec a_0)\in\ker \Re B^{-1}$.

We claim that in fact
$\ker\Re B^{-1}=\RR(a_{00},\vec a_0)$.
To that end, suppose that $v\in \ker\Re B^{-1}$.
Then Equation \eqref{ImPartofEq} gives
$$
0=I_n\Im B^{-1} v=I_n B^{-1} v.
$$
Since $B^{-1}$ is invertible, the kernel of $I_n B^{-1}$
is one-dimensional. This proves the claim
since $(a_{00},\vec a_0)\neq0$ by assumption.

Finally, we prove $\Re B^{-1}$
is positive semi-definite. Recall notation~\eqref{eq:Ine}.
Note that the matrices
 $I_n^{1/p^2}+\i A$ limit, as $p$ tends to infinity,
to $B=I_n+\i A$, and similarly for
the corresponding inverse matrices.
Now, for each $p>0$
$$
\Re
\big(
(I_n^{1/p^2}+\i A)^{-1}
\big)
=
I_n^p
\Re
\big(
(I+\i I_n^pAI_n^p)^{-1}
\big)
I_n^p
$$
is positive definite according to
Lemma \ref{HLPosLemma}. The lemma follows by taking $p$ to infinity.

\epf

For the next lemmas, we assume the following matrix version of DSL~\eqref{DSLMainEq},
\beq
\label{MatrixDSLEq}
\Im\big(e^{-\i\th}\det B\big)=0,
\qq \Re\big(e^{-\i\th}\det B^+\big)>0,
\eeq
which allows us to complete our analysis of $\Re\big(e^{-\i\th}\cof B\big)$.

We separate the discussion into two cases. The first case, treated in the following lemma, applies when $(t,x)$ is
a critical point of $\dot k$, i.e., $\nabla\dot k(t,x)=0$.
\blem
\label{CritPtCaseLemma}
Suppose \eqref{MatrixDSLEq} holds and that
$A=\diag(0,A^+)$.
Then $\Re\big(e^{-\i\th}\cof B\big)$ is nonnegative
with exactly one positive eigenvalue.
\elem

\begin{proof} We have
$B=\diag(0,B^+)$. Thus,
$\cof B = \diag(\det B^+,0,\ldots,0)$.
The lemma follows from~\eqref{MatrixDSLEq}.
\end{proof}

The second case, treated in the next several lemmas, applies when $(t,x)$ is not
a critical point of $\dot k$, i.e.,
$\nabla\dot k(t,x)\not=0$.

\blem
\label{FirstrowcolumnLemma}
Suppose \eqref{MatrixDSLEq} holds and that
$A\not=\diag(0,A^+)$. Then
$\vec a_0\not=0$.
\elem

\bpf
Suppose to the contrary that $\vec a_0=0$. Then
$$
\baeq
\Im \big(e^{-\i\th}\det B\big)
&=
\Im \Big(
e^{-\i\th}
\big(
\i a_{00} \det B^+
\big)
\Big)
\cr
&=
a_{00}
\Re
\big(
e^{-\i\th}
\det B^+
\big).
\eaeq
$$
Since by~\eqref{MatrixDSLEq} the left hand side vanishes while
$\Re
\big(
e^{-\i\th}
\det B^+
\big)>0$, we conclude that $a_{00}=0$. Thus we obtain a contradiction to the hypothesis $A\not=\diag(0,A^+)$.
\epf

\blem
\label{signeddetLemma}
Suppose \eqref{MatrixDSLEq} holds and that
$\vec a_0\neq0$.
Then $\Re\big(e^{-\i\th}\det B\big)>0$.
\elem

\bpf
By formula~\eqref{DetFormulaEq} and Lemma~\ref{HLPosLemma},
the equation
$\Im\big(e^{-\i\th}\det B\big)=0$ becomes
\beq
\label{ImfulldetEq}
\baeq
0
&=
a_{00}\Re\big(e^{-\i\th}\det B^+\big)
+\langle \vec a_0, \Re\big((B^+)^{-1}\big)\vec a_0\rangle
\Im\big(e^{-\i\th}\det B^+\big)
\cr
&\qq+\langle \vec a_0, \Im\big((B^+)^{-1}\big)\vec a_0\rangle
\Re\big(e^{-\i\th}\det B^+\big),
\eaeq
\eeq
while
\beq
\label{RefulldetEq}
\baeq
\Re\big(e^{-\i\th}\det B\big)
&=
-a_{00}\Im\big(e^{-\i\th}\det B^+\big)
+\langle \vec a_0, \Re\big((B^+)^{-1}\big)\vec a_0\rangle
\Re\big(e^{-\i\th}\det B^+\big)
\cr
&\qq -\langle \vec a_0, \Im\big((B^+)^{-1}\big)\vec a_0\rangle
\Im\big(e^{-\i\th}\det B^+\big).
\eaeq
\eeq
Since $\Re\big(e^{-\i\th}\det B^+\big)>0$, we may
solve for $a_{00}$ in \eqref{ImfulldetEq}. Substituting
this expression into \eqref{RefulldetEq} then yields
$$
\Re\big(e^{-\i\th}\det B\big)
=
\frac{
\big|\det B^+\big|^2
}
{
\Re\big(e^{-\i\th}\det B^+\big)
}
\langle \vec a_0, \Re\big((B^+)^{-1}\big)\vec a_0\rangle
$$
which is positive by Lemma \ref{HLPosLemma}
and \eqref{MatrixDSLEq}.
\epf

\begin{lemma}\label{lm:notCritPtcase}
Suppose \eqref{MatrixDSLEq} holds and that
$\vec a_0\neq0$. Then
$\Re\big(e^{-\i\th}\cof(B)\big)$ is positive semi-definite with exactly one zero eigenvalue. The
nullspace is spanned by the first column of $A$.
\end{lemma}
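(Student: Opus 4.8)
The plan is to deduce the statement from Lemmas~\ref{RLemma}, \ref{DegPosLemma}, and~\ref{signeddetLemma}, the point being that hypothesis~\eqref{MatrixDSLEq} forces $e^{-\i\th}\det B$ to be a \emph{positive real} scalar, so that $\Re\big(e^{-\i\th}\cof B\big)$ is merely a positive multiple of $\Re B^{-1}$, to which Lemma~\ref{DegPosLemma} directly applies.

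First I would record that since $\vec a_0\neq0$, Lemma~\ref{RLemma} makes $B=I_n+\i A$ invertible, and since $I_n$ and $A$ are symmetric, so are $B$ and $B^{-1}$; hence the adjugate identity $B\cdot(\cof B)^T=(\det B)I$ simplifies to $\cof B=(\det B)\,B^{-1}$.

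Next, by~\eqref{MatrixDSLEq} the number $e^{-\i\th}\det B$ has vanishing imaginary part, hence is real, and it is nonzero by invertibility of $B$; Lemma~\ref{signeddetLemma}, whose hypotheses are precisely~\eqref{MatrixDSLEq} together with $\vec a_0\neq0$, then shows $\rho:=e^{-\i\th}\det B>0$. Combining with the previous step,
\[
\Re\big(e^{-\i\th}\cof B\big)\;=\;\Re\big(\rho\,B^{-1}\big)\;=\;\rho\,\Re B^{-1}.
\]

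Finally, Lemma~\ref{DegPosLemma}, again applicable because $\vec a_0\neq0$, asserts that $\Re B^{-1}$ is positive semi-definite with exactly one zero eigenvalue and with nullspace spanned by the first column of $A$; multiplying by the positive scalar $\rho$ preserves each of these three assertions, which gives the claim. I do not anticipate a genuine obstacle: the only step needing a word of justification is the identity $\cof B=(\det B)\,B^{-1}$, which relies on both the invertibility of $B$ (Lemma~\ref{RLemma}) and its symmetry, and everything else is assembly of results already established in this section.
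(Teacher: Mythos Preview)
Your proof is correct and follows essentially the same route as the paper: both reduce $\Re\big(e^{-\i\th}\cof B\big)$ to a positive scalar multiple of $\Re B^{-1}$ via the identity $\cof B=(\det B)\,B^{-1}$ and the reality of $e^{-\i\th}\det B$, then invoke Lemmas~\ref{DegPosLemma} and~\ref{signeddetLemma}. You are slightly more explicit in justifying the cofactor identity (invoking invertibility from Lemma~\ref{RLemma} and symmetry of $B$ to dispense with the transpose), which the paper leaves implicit.
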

\bpf
By~\eqref{MatrixDSLEq}, we have
\[
e^{-\i\th}\det B=\Re\big(e^{-\i\th}\det B\big).
\]
So,
\[
\Re\big(e^{-\i\th}\cof(B)\big) = \Re\big(e^{-\i\th}\det B \,B^{-1}\big) =  \Re\left(e^{-\i\th}\det B\right) \Re B^{-1}.
\]
The claim follows from Lemmas~\ref{DegPosLemma} and~\ref{signeddetLemma}.
\epf

\begin{proof}[Proof of Theorem~\ref{SymbolThm}]
The theorem follows from equation~\eqref{SymbolEq}, and Lemmas~\ref{CritPtCaseLemma},~\ref{FirstrowcolumnLemma} and~\ref{lm:notCritPtcase}.
\end{proof}

\section{The subequation}
\label{DSetSec}
\subsection{Construction}
In this section we associate a subequation to the DSL.

Denote by $\Sym(\R^m)$ the set of all symmetric
$m$-by-$m$ matrices, and by $\P$ the subset of nonnegative matrices.
Following \HL \cite{HL}, a proper nonempty closed subset $F$ of $\Sym(\R^m)$ is a {\it subequation} (or a {\it Dirichlet set}) if
\eqref{DirichletSetPropEq} holds.
Denote by $\Int S$ the interior of a set $S$, and
by $S^c$ its complement.
The {\it dual set} to $F$, denoted by $\widetilde F$, is 
$$
\tF:=(-\Int F)^c.
$$
As befits a notion of duality, $\tF$ is again a subequation,
and $\wt{\wt F}=F$ \cite[p. 408]{HL}.


\smallskip
Recall the definition of the lifted
space-time Lagrangian angle $\wtTh$ from
Theorem \ref{USCThm}.
For $c\in\R$, define $\calF_c \subset \Sym(\R^{n+1})$ by
\beq
\label{calFcEq}
\calF_c
:=\big\{A \in \Sym(\R^{n+1})\,:\,
\wtTh(A)\geq c
\big\}.
\eeq
\HL introduced
the set
\begin{equation}\label{eq:Fc}
F_c
:=\big\{A \in \Sym(\R^{n+1})\,:\,
\tr\tan^{-1}(A) \ge c\big\},
\end{equation}
in conjunction with the special Lagrangian equation.
When $|c|<(n+1)\pi/2$, the set $F_c$ is non-empty. Thus, $F_c$ is a subequation
because adding a positive semi-definite
matrix to $A$ does not decrease its eigenvalues,
and $\tan^{-1}$ is a monotonically increasing function.
\HL also show that \cite[Proposition 10.4]{HL}
\beq\label{FcDualEq}
\widetilde F_c=F_{-c}.
\eeq
We introduce $\calF_c$ to study the DSL. Building
on our work in the preceding sections,
we prove the following.

\begin{theorem}
\lb{calFcThm}
If $|c|<(n+1)\pi/2$, then $\calF_c$ is a subequation.
Its
dual is $\tcalF_c=\calF_{-c}$.
\end{theorem}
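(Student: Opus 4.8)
The plan is to verify the two defining properties of a subequation for $\calF_c$ and then compute the dual. For the closedness and the positivity property \eqref{DirichletSetPropEq}, the key fact is that $\wtTh$ is upper semi-continuous (Theorem \ref{USCThm}) and monotone under addition of nonnegative matrices. Upper semi-continuity of $\wtTh$ gives immediately that $\calF_c = \{A : \wtTh(A) \geq c\}$ is closed. For monotonicity, I would first establish it on the dense open set $\Symnplusone \sm \calS$: there $\wtTh(A) = \widehat\Theta(A) = \tr\arg(I_n + \i A)$, and adding $P \in \P$ should not decrease this trace. The cleanest way is to differentiate: for $A_s = A + sP$ with $A_s \notin \calS$, the derivative $\frac{d}{ds}\widehat\Theta(A_s)$ equals $\tr\big(\Re(e^{\i \cdot 0}\cof \text{-type expression})\cdot P\big)$, and by the argument underlying Lemma \ref{DegPosLemma}/Lemma \ref{HLPosLemma} this matrix factor is positive semi-definite, so the derivative is $\geq 0$. (Alternatively one can cite that $\widehat\Theta$ is essentially $\tilde\theta$ of the $(n+1)$-variable special Lagrangian angle evaluated with a degenerate weight, which is already known to be monotone; but the direct computation via Lemma \ref{HLPosLemma} is self-contained.) Then I extend monotonicity across $\calS$ by a limiting argument: if $A \in \calS$ and $A + P \notin \calS$, approximate $A$ by $A + \eps \diag(1,0,\dots,0) \notin \calS$, use monotonicity off $\calS$, and pass to the limit using the usc extension formula $\wtTh(A) = \pi/2 + \tr\arg(I + \i A^+)$ together with Lemma \ref{lm:tThth}; the case $A, A+P$ both in $\calS$ is trivial since then $P \in \calS$ and the formula reduces to monotonicity of $\tilde\theta$ on $\Symn$. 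This gives $F + \P \subset F$. Nonemptiness for $|c| < (n+1)\pi/2$ follows since $\wtTh$ takes all values in $(-(n+1)\pi/2, (n+1)\pi/2)$, e.g. $\wtTh(\diag(0, sI)) = \pi/2 + n\tan^{-1}(s)$ sweeps out $(\pi/2 - n\pi/2, \pi/2 + n\pi/2)$ and scaling/reflecting covers the rest; and it is a proper subset since large negative-definite $A$ have $\wtTh(A) < c$.

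For the dual, I want to show $\tcalF_c = (-\Int \calF_c)^c = \calF_{-c}$. The natural route is via the lower semi-continuous companion $\underline\wtTh$ from Theorem \ref{USCThm}, which satisfies $\underline\wtTh(-A) = -\wtTh(A)$: indeed off $\calS$ the eigenvalues of $I_n + \i(-A)$ are complex-conjugates (in the relevant half-plane) of those of $I_n - \i A = \overline{I_n + \i A}$ reflected, so $\widehat\Theta(-A) = -\widehat\Theta(A)$, and on $\calS$ one checks $\underline\wtTh(-A) = -\pi/2 + \tr\arg(I - \i A^+) = -(\pi/2 + \tr\arg(I + \i A^+)) = -\wtTh(A)$ using $\tr\arg(I - \i A^+) = -\tr\arg(I+\i A^+)$. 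Next I would identify $\Int \calF_c$: since $\wtTh$ is usc, $\{\wtTh > c\} \subseteq \Int\{\wtTh \geq c\}$, and for the reverse one uses that the usc function $\wtTh$ cannot be locally constant at its level set from above except where it coincides with the continuous/lsc behavior — more precisely, I claim $\Int \calF_c = \{A : \underline\wtTh(A) > c\}$, which follows because near any $A$ the values of $\wtTh$ dip down to $\underline\wtTh(A)$ (this is exactly the content of $\underline\wtTh$ being the maximal lsc minorant, realized along approximating sequences off $\calS$). Granting this, $(-\Int\calF_c)^c = \{A : \underline\wtTh(-A) \leq -c\}^c$... let me restate: $-\Int\calF_c = \{B : \underline\wtTh(-B) > c\} = \{B : -\wtTh(B) > c\} = \{B : \wtTh(B) < -c\}$, whose complement is $\{B : \wtTh(B) \geq -c\} = \calF_{-c}$, as desired.

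The main obstacle I expect is the precise identification of $\Int \calF_c$ with $\{\underline\wtTh > c\}$ — equivalently, controlling how small $\wtTh$ can be in a neighborhood of a point of $\calS$. This requires a quantitative version of the upper semi-continuity argument in Subsection \ref{USCSubSec}: not merely that $\limsup \wtTh(A_i) \leq \wtTh(A)$ along $A_i \to A$, but that one can choose $A_i \to A$ with $\wtTh(A_i) \to \underline\wtTh(A)$ (the "bad" eigenvalue $\arg\delta_i$ can be steered toward $-\pi/2$ rather than $+\pi/2$ by choosing the first-row perturbation appropriately). Once that realizability is in hand, the interior computation and hence the duality formula fall out; the monotonicity and closedness pieces are comparatively routine given Theorem \ref{USCThm} and Lemma \ref{HLPosLemma}.
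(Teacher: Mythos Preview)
Your proposal is correct and follows essentially the same route as the paper: closedness from upper semi-continuity of $\wtTh$, monotonicity by differentiating $\widehat\Theta$ along a path and invoking the positive semi-definiteness of $\Re(I_n+\i A)^{-1}$ (the paper's Lemma~\ref{AlmostDSetLemma}), the reflection identity $\wtTh(-A)=-\underline\wtTh(A)$ (the paper's Lemma~\ref{lm:-wtTh}), and the identification $\Int\calF_c=\{\underline\wtTh>c\}$ (the paper's Lemma~\ref{lm:Fcd}). The only point you leave implicit that the paper makes explicit is that the straight-line path $s\mapsto A+sP$ may cross $\calS$; the paper handles this by a $C^1$-small perturbation of the path using that $\calS$ has codimension at least $2$, which you should mention when filling in the details.
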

The proof is given in the following series of lemmas.
\begin{lemma}\label{lm:calFcc}
$\calF_c$ is closed and non-empty.
\end{lemma}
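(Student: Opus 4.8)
The plan is to prove Lemma~\ref{lm:calFcc} directly from the properties of $\wtTh$ established in Theorem~\ref{USCThm}. For closedness, the key fact is that $\wtTh$ is upper semi-continuous on $\Symnplusone$ (indeed it is, by Theorem~\ref{USCThm}, the \emph{minimal} usc extension of $\widehat\Theta$). Since superlevel sets of a usc function are automatically closed, $\calF_c = \{A : \wtTh(A) \ge c\}$ is closed; I would simply state this, perhaps recalling the elementary argument that if $A_i \to A$ with $\wtTh(A_i) \ge c$, then $\wtTh(A) \ge \limsup_i \wtTh(A_i) \ge c$.

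For non-emptiness, the natural approach is to exhibit an explicit matrix in $\calF_c$. The cleanest choice is a scalar matrix of the form $A = \diag(0, \lambda I_n)$ for an appropriate $\lambda \in \R$, which lies in $\calS$; by the definition of $\wtTh$ on $\calS$ we get $\wtTh(A) = \pi/2 + \tr\arg(I + \i\lambda I_n) = \pi/2 + n\tan^{-1}(\lambda)$. As $\lambda$ ranges over $\R$, this expression ranges over the open interval $(\pi/2 - n\pi/2,\, \pi/2 + n\pi/2) = (-(n-1)\pi/2,\, (n+1)\pi/2)$. This covers all $c < (n+1)\pi/2$ provided $c > -(n-1)\pi/2$. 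To handle the remaining range $c \le -(n-1)\pi/2$ (which still satisfies $|c| < (n+1)\pi/2$), I would instead use a matrix \emph{not} in $\calS$ — for instance $A = \diag(\mu, \lambda I_n)$ with $\mu \ne 0$, where $\wtTh(A) = \widehat\Theta(A) = \tan^{-1}(\mu) + n\tan^{-1}(\lambda)$, whose range over $(\mu,\lambda) \in (\R\setminus\{0\}) \times \R$ is all of $(-(n+1)\pi/2, (n+1)\pi/2)$. So in fact a single construction using $\diag(\mu,\lambda I_n)$ with $\mu,\lambda$ chosen appropriately (allowing $\mu = 0$ via the usc value on $\calS$ when convenient) shows $\wtTh$ takes every value in $(-(n+1)\pi/2,(n+1)\pi/2)$, and hence $\calF_c \ne \emptyset$ whenever $|c| < (n+1)\pi/2$.

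I do not expect any serious obstacle here: the lemma is essentially a bookkeeping consequence of Theorem~\ref{USCThm}. The only mild subtlety is making sure the explicit test matrix realizes the full target range of $c$-values, which is why I would use a diagonal matrix with one eigenvalue of the ambient complexified matrix free to move (the "$\mu$" slot) rather than restricting to $\calS$; this guarantees coverage of the whole interval $(-(n+1)\pi/2,(n+1)\pi/2)$ symmetrically. I would present the closedness argument in one sentence and then give the one-line computation of $\wtTh$ on the chosen diagonal family to conclude non-emptiness.
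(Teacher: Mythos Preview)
Your proposal is correct and essentially matches the paper's proof: closedness via upper semi-continuity of $\wtTh$, and non-emptiness via an explicit diagonal test matrix. The paper simply takes $A = pI$ and observes $\wtTh(pI) = \pi/2 + n\tan^{-1}p \to (n+1)\pi/2$ as $p \to \infty$, so $pI \in \calF_c$ for $p \gg 1$.

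One remark: your case distinction is unnecessary. You only need some $A$ with $\wtTh(A) \ge c$, not $\wtTh(A) = c$; since your first family $\diag(0,\lambda I_n)$ already yields values approaching $(n+1)\pi/2$ from below, it handles every $c < (n+1)\pi/2$ in one stroke, and the detour through $\diag(\mu,\lambda I_n)$ for small $c$ can be dropped.
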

\bpf
By Theorem \ref{USCThm}, $\calF_c$ is a superlevel set of
a usc function. Hence, $\calF_c$ is closed.
It is nonempty since
$\wtTh(pI)=\pi/2+n\tan^{-1}p$ tends to $(n+1)\pi/2$
as $p$ tends to infinity, while $c<(n+1)\pi/2$;
thus $pI\in\calF_c$ for all $p\gg 1$.
\epf

\blem
\label{AlmostDSetLemma}
Suppose that $A\in \calF_c$. Then
$A+P\in\calF_c$ for each $P\in \P$.
\elem
\bpf
Since $\calF_c$ is closed by Lemma~\ref{lm:calFcc}, it suffices to prove $A + P \in \calF_c$ for $P$ positive definite and such that $A+P \neq \diag(0,A^++P^+).$ Suppose first that $A\neq\diag(0,A^+)$. Let $\{P_t\}_{t \in [0,1]}$ be a smooth path of matrices with $P_0 = 0, \,P_1 = P,$ such that $\dot P_t$ is positive definite for all $t,$ and such that $A + P_t \neq \diag(0,A^++P_t^+)$ for all $t.$ Indeed, the path $P_t$ can be constructed by starting with the linear path $t \mapsto tP$ and making a $C^1$ small perturbation to avoid the set of matrices $M$ satisfying $M = \diag(0,M^+),$ which has codimension at least $2.$
Then Theorem \ref{USCThm} implies
that $\wtTh(A + P_t)$ is differentiable for all $t.$
Using $\wtTh(A)=\tr\arg(I_n+\i A)=\tr\,\Im\!
\det\log(I_n+\i A)$, we calculate
$$
\frac{d}{dt}
\wtTh(A+P_t)=
\tr\Big(
\Re\big((I_n+\i (A+P_t))^{-1}\big)\dot P_t
\Big).
$$
This is nonnegative thanks to Lemma \ref{DegPosLemma}. Integrating from $t=0$
to $t=1$ yields
\beq
\label{DirichletSetDesiredEq}
\wtTh(A+P)\ge \wtTh(A)\ge c.
\eeq
Now, suppose $A = \diag(0,A^+).$ Choose $\epsilon > 0$ such that $\hat P = P - \epsilon I$ is positive definite and set $\hat A = A + \epsilon I$. Then
\[
\wtTh(\hat A) = \frac{\pi}2 + \tr\arg(I + \i (A^++\epsilon I)) > \tr\arg(I + \i A^+) = \wtTh(A).
\]
On the other hand, $\hat A \neq \diag(0,\hat A^+),$ so by the case of the lemma already proved, we conclude
\[
\wtTh(A + P) = \wtTh(\hat A + \hat P) \geq \wtTh(\hat A).
\]
Combining the preceding two equations, we again obtain inequality~\eqref{DirichletSetDesiredEq} as desired.
\epf

\begin{lemma}\label{lm:-wtTh}
For $A \in \Symnplusone$ we have
\[
\wtTh(-A) = -\underline\wtTh(A).
\]
\end{lemma}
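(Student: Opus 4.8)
The plan is to verify the identity $\wtTh(-A) = -\underline\wtTh(A)$ separately on $\Symnplusone \sm \calS$ and on $\calS$, using the characterizations of $\wtTh$ and $\underline\wtTh$ provided in Theorem~\ref{USCThm} and, on the regular part, the explicit formula~\eqref{eq:whTh} for $\widehat\Theta$. First I would observe that the set $\calS$ is invariant under $A \mapsto -A$: indeed $A = \diag(0,A^+)$ if and only if $-A = \diag(0,-A^+)$. So the two cases are preserved under negation.

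On the regular part, suppose $A \in \Symnplusone\sm\calS$, so that $-A \in \Symnplusone\sm\calS$ as well, and $\wtTh(-A) = \widehat\Theta(-A)$ while $-\underline\wtTh(A) = -\widehat\Theta(A)$. By Corollary~\ref{cor:argB} and the remark following it, $\widehat\Theta(A) = \tr\arg(I_n + \i A)$, where $\arg(I_n+\i A)$ is defined by the Dunford--Taylor integral over a contour in $\CC\sm\RR_{\leq 0}$ enclosing the eigenvalues of $I_n + \i A$. The key computation is that the eigenvalues of $I_n + \i(-A) = I_n - \i A$ are the complex conjugates of the eigenvalues of $I_n + \i A$ (since $I_n + \i A$ is obtained from a real matrix-valued expression, $\overline{I_n + \i A} = I_n - \i A$, so $\det(\zeta I - (I_n - \i A)) = \overline{\det(\bar\zeta I - (I_n+\i A))}$, giving the conjugate spectrum with matching multiplicities). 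Since for the branch of $\arg$ with values in $(-\pi,\pi]$ and $\mu \notin \RR_{\leq 0}$ we have $\arg(\bar\mu) = -\arg(\mu)$, summing over eigenvalues with multiplicity yields $\widehat\Theta(-A) = \sum_\lambda m(\lambda)\arg(\bar\lambda) = -\sum_\lambda m(\lambda)\arg(\lambda) = -\widehat\Theta(A)$, which is the desired identity on $\Symnplusone\sm\calS$.

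On $\calS$, suppose $A = \diag(0,A^+)$, so $-A = \diag(0,-A^+) \in \calS$. By the definitions in Theorem~\ref{USCThm},
\[
\wtTh(-A) = \pi/2 + \tr\arg(I + \i(-A^+)) = \pi/2 - \tr\arg(I + \i A^+),
\]
where in the last step I use that $\tan^{-1}$ is odd, i.e. $\arg(1 + \i\la_j(-A^+)) = \arg(1 - \i\la_j(A^+)) = -\arg(1+\i\la_j(A^+))$, so $\tr\arg(I+\i(-A^+)) = -\tr\arg(I+\i A^+)$. On the other hand,
\[
-\underline\wtTh(A) = -\bigl(-\pi/2 + \tr\arg(I+\i A^+)\bigr) = \pi/2 - \tr\arg(I+\i A^+),
\]
so the two expressions agree. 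I do not anticipate a serious obstacle here; the only point requiring care is the conjugation-of-spectrum argument on the regular part — in particular, checking that the branch cut $\RR_{\leq 0}$ is avoided by the spectrum of both $I_n + \i A$ and $I_n - \i A$ (which follows from Lemma~\ref{RealPartEVLemma} and Corollary~\ref{cor:argB} applied to both $A$ and $-A$, using that $\vec a_0 \neq 0$ or $a_{00}\neq 0$ is equivalent to the corresponding statement for $-A$), and that eigenvalue multiplicities are preserved under conjugation, which is immediate from the conjugated characteristic polynomial identity above.
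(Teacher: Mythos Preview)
Your proof is correct and follows essentially the same route as the paper: split into the cases $A\notin\calS$ and $A\in\calS$, use the conjugate-spectrum observation $\spec(I_n-\i A)=\overline{\spec(I_n+\i A)}$ with matching multiplicities to get $\widehat\Theta(-A)=-\widehat\Theta(A)$ on the regular part, and check the singular part directly from the definitions. The paper's version is terser (it phrases the spectral fact as $\det[I_n+\i A-\delta I]=0\iff\det[I_n-\i A-\bar\delta I]=0$) but the content is identical.
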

\bpf
If $A\neq\diag(0,A^+)$, then
\[
\widehat \Theta(-A)=-\widehat \Theta(A).
\]
Indeed,
$\det[I_n+\i A-\delta I]=0$
iff $\det[I_n-\i A-\bar \delta I]=0$, and the multiplicities
of the eigenvalues
 are the same.
Therefore, $\wtTh(-A) = \widehat\Theta(-A) = -\widehat\Theta(A) = - \underline \wtTh(A).$
On the other hand, if $A = \diag(0,A^+)$ then
\[
\wtTh(-A) = \pi/2 + \tr\arg(I - \i A^+) = \pi/2 - \tr\arg(I + \i A^+) = - \underline\wtTh(A).
\]
\epf
\begin{lemma}\label{lm:Fcd}
We have $\widetilde \calF_c = \calF_{-c}.$
\end{lemma}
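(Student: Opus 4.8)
The plan is to identify $\Int\calF_c$ explicitly and then read off the dual using Lemma~\ref{lm:-wtTh}. For any set $S$ one has $\Int S=\big(\overline{S^c}\big)^c$, so with $S=\calF_c$ (whose complement is $\{A:\wtTh(A)<c\}$, open since $\wtTh$ is upper semi-continuous by Theorem~\ref{USCThm}) it suffices to prove the identity
\[
\overline{\{A\in\Symnplusone:\wtTh(A)<c\}}=\{A\in\Symnplusone:\underline\wtTh(A)\le c\}\qquad(\ast),
\]
for then $\Int\calF_c=\{A:\underline\wtTh(A)>c\}$. Granting $(\ast)$, the lemma follows in one line: replacing $A$ by $-A$ in Lemma~\ref{lm:-wtTh} gives $\underline\wtTh(-A)=-\wtTh(A)$, whence $-\Int\calF_c=\{A:\underline\wtTh(-A)>c\}=\{A:\wtTh(A)<-c\}$, and therefore $\widetilde\calF_c=(-\Int\calF_c)^c=\{A:\wtTh(A)\ge -c\}=\calF_{-c}$.

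The inclusion $\subseteq$ in $(\ast)$ is soft: directly from the definitions $\wtTh$ and $\underline\wtTh$ agree off $\calS$ and differ by $\pi$ on $\calS$, so $\underline\wtTh\le\wtTh$ everywhere, giving $\{\wtTh<c\}\subseteq\{\underline\wtTh\le c\}$; and $\{\underline\wtTh\le c\}$ is closed because $\underline\wtTh$ is lower semi-continuous (Theorem~\ref{USCThm}). Passing to closures gives $\subseteq$.

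For the reverse inclusion I must produce, for each $A$ with $\underline\wtTh(A)\le c$, a sequence $A_j\to A$ with $\wtTh(A_j)<c$. If $A\notin\calS$ then $\wtTh(A)=\underline\wtTh(A)=\widehat\Theta(A)\le c$; when this is strict, $A$ itself lies in $\{\wtTh<c\}$, and when $\widehat\Theta(A)=c$ I perturb downward. By Lemma~\ref{DegPosLemma} (and, in the block-diagonal case $\vec a_0=0$, by Lemma~\ref{HLPosLemma}) $\Re(I_n+\i A)^{-1}$ is positive semi-definite and nonzero; choosing a unit vector $v$ with $\langle v,\Re(I_n+\i A)^{-1}v\rangle>0$ and using the derivative formula from the proof of Lemma~\ref{AlmostDSetLemma}, one gets $\frac{d}{dt}\widehat\Theta(A-tvv^T)\big|_{t=0}=-\langle v,\Re(I_n+\i A)^{-1}v\rangle<0$, so $\wtTh(A-tvv^T)=\widehat\Theta(A-tvv^T)<c$ for small $t>0$ (these matrices stay off the closed set $\calS$). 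If instead $A=\diag(0,A^+)\in\calS$, so $\underline\wtTh(A)=-\pi/2+\tr\tan^{-1}(A^+)$, I take the explicit sequence
\[
A_j=\begin{pmatrix}-1/j & j^{-3}e_1^T\\ j^{-3}e_1 & A^+-j^{-1}I\end{pmatrix}\notin\calS .
\]
By Lemma~\ref{RealPartEVLemma} all eigenvalues of $I_n+\i A_j$ lie in the open right half-plane, so $\widehat\Theta(A_j)$ is well-defined (Corollary~\ref{cor:argB}). Standard eigenvalue perturbation theory (\cite{Kato}) shows that $n$ of those eigenvalues equal $1+\i(\mu_k-1/j)+O(j^{-5})$, where $\mu_1,\dots,\mu_n$ are the eigenvalues of $A^+$, contributing $\tr\tan^{-1}(A^+-j^{-1}I)+O(j^{-5})$ to $\widehat\Theta(A_j)$, while the remaining eigenvalue equals $-\i/j+j^{-6}\langle e_1,(I+\i(A^+-j^{-1}I))^{-1}e_1\rangle+\cdots$, whose real part is positive of order $j^{-6}$ (Lemma~\ref{HLPosLemma}) and hence whose argument is $-\pi/2+O(j^{-5})$. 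Consequently
\[
\widehat\Theta(A_j)=-\frac{\pi}{2}+\tr\tan^{-1}(A^+)-\frac1j\tr\big((I+(A^+)^2)^{-1}\big)+O(j^{-2})=\underline\wtTh(A)-\frac1j\tr\big((I+(A^+)^2)^{-1}\big)+O(j^{-2}),
\]
which is $<c$ for all large $j$ since $\underline\wtTh(A)\le c$ and $\tr\big((I+(A^+)^2)^{-1}\big)>0$. As $\wtTh(A_j)=\widehat\Theta(A_j)$ and $A_j\to A$, this yields $A\in\overline{\{\wtTh<c\}}$, completing $(\ast)$ and the lemma.

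I expect the borderline sub-case $A\in\calS$ with $\underline\wtTh(A)=c$ to be the crux. Approaching such an $A$ through $\Symnplusone\sm\calS$ with $A^+$ held fixed makes $\widehat\Theta$ tend to $\underline\wtTh(A)=c$ only in the limit, and Lemma~\ref{lm:tThth} bounds $\widehat\Theta$ below near $A$ by $\tr\tan^{-1}((\cdot)^+)-\pi/2\approx c$; so one genuinely must combine the off-$\calS$ degeneration of the extra eigenvalue (pushing its argument to $-\pi/2$) with the downward shift $A^+\mapsto A^+-j^{-1}I$, and keep track of the sign of the resulting $O(1/j)$ correction. Once that term is controlled, the remainder rests only on the monotonicity and differentiability properties of $\wtTh$ already established.
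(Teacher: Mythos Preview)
Your proof is correct and follows the same overall strategy as the paper: identify $\Int\calF_c=\{\underline\wtTh>c\}$ and then apply Lemma~\ref{lm:-wtTh}. The paper simply asserts the first identification as an immediate consequence of Theorem~\ref{USCThm}, implicitly relying on the strict monotonicity of $\widehat\Theta$ in the $I$-direction already established in the proof of Lemma~\ref{AlmostDSetLemma}; you instead spell out the full case analysis via the equivalent statement~$(\ast)$. Your treatment of the borderline case $A\in\calS$ is, however, far more elaborate than needed: the block-diagonal sequence $B_\eps=\diag(-\eps,A^+-\eps I)\notin\calS$ already gives
\[
\widehat\Theta(B_\eps)=\arg(-\i\eps)+\tr\tan^{-1}(A^+-\eps I)=-\tfrac{\pi}{2}+\tr\tan^{-1}(A^+-\eps I)<\underline\wtTh(A)\le c,
\]
which settles that case in one line without any off-diagonal coupling or eigenvalue perturbation theory. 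Similarly, in the case $A\notin\calS$ with $\widehat\Theta(A)=c$, perturbing by $-\eps I$ (rather than a rank-one $-tvv^T$) and invoking the derivative formula from Lemma~\ref{AlmostDSetLemma} together with $\tr\Re(I_n+\i A)^{-1}>0$ is slightly cleaner.
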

\bpf
Recall the assertion of Theorem~\ref{USCThm} that $\wtTh$ (resp. $\underline\wtTh$) is the minimal usc extension (resp. maximal lsc extension) of $\widehat\Theta$. It follows that
\[
\Int \calF_c = \left\{ A \in \Symnplusone\,:\,\underline\wtTh(A) > c\right\}.
\]
By Lemma~\ref{lm:-wtTh}, we obtain
\[
-\Int \calF_c = \left\{ A \in \Symnplusone\,:\, - \wtTh(A) > c\right\}.
\]
Therefore, $\tcalF_c=\calF_{-c}$ as claimed.
\epf
\begin{proof}[Proof of Theorem~\ref{calFcThm}]
The Theorem follows from the Lemmas~\ref{lm:calFcc},~\ref{AlmostDSetLemma}, and~\ref{lm:Fcd}.
\end{proof}

\subsection{Reformulation of the DSL}

The following result relates our efforts in this section with the DSL equation. It reformulates
the Dirichlet problem for $C^2$ solutions
of DSL \eqref{DSLMainEq} in terms of the
subequations $\calF_c,F_{c-\pi/2},$ and their duals.

\bcor
\label{CtwoDSLCor}
Let $\th\in(-\pi,\pi]$, let
$D$ be a domain in $\RR^n$,
and let $k\in C^2([0,1]\times D)$.
Then $k$ is a solution of the DSL \eqref{DSLMainEq} if and only if for each $(t,x)\in[0,1]\times D$,
\begin{align}
\nabla^2k(t,x) &\in
\calF_c\cap(-\calF_{-c})=\del\calF_c, \label{eq:pase}\\
\nabla^2_xk(t,x)
&\in
\Int \big(F_{c-\pi/2}\cap (- F_{-c-\pi/2}) \big), \label{eq:nxse}
\end{align}
for a fixed $c\in(-(n+1)\pi/2,(n+1)\pi/2)$ satisfying
$c=\th + 2\pi k$ with $k \in \Z$.
\ecor

\begin{proof}
Let $\th_{k(t)}:D\ra S^1$ denote
the Lagrangian angle associated to the Lagrangian
graph of $u = k(t)$ by formula~\eqref{eq:LA}.
Observe that
the condition
$$
\Re\big(e^{-\i\th}\det[I+\i \nabla^2_x k(t,x)]\big)>0
$$
is equivalent to $\th_{k(t)}(x)-\th \in (-\pi/2,\pi/2) \subset S^1$, which is equivalent to
\begin{equation}\label{eq:thetabd}
-\pi/2 < \tr\tan^{-1}(\nabla_x^2k(t,x))-c < \pi/2
\end{equation}
for an appropriate choice of $c = \theta \mod 2\pi.$ The preceding inequality is equivalent to condition~\eqref{eq:nxse}. For future reference, we rewrite equation~\eqref{eq:thetabd} using notation~\eqref{eq:thetat} as
\begin{equation}\label{eq:thetabdm}
|\tilde\theta(\nabla_x^2k(t,x)) - c| < \pi/2.
\end{equation}

We divide the remainder of the proof into two cases. First, consider the case $\nabla^2k(t,x)\not=\diag(0,\nabla_x^2k(t,x))$.
By Lemma~\ref{lm:-wtTh} condition~\eqref{eq:pase} is equivalent to
\begin{equation}\label{eq:whThk=c}
\widehat\Theta(\nabla_x^2k(t,x))=c,
\end{equation}
which implies
\begin{equation}\label{eq:imdet=0}
\Im\big(e^{-\i\th}\det[I_n+\i \nabla^2k(t,x)]\big)=0.
\end{equation}
Conversely, equation~\eqref{eq:imdet=0} implies equation~\eqref{eq:whThk=c} modulo $2\pi.$ So, Lemma~\ref{lm:tThth} and inequality~\eqref{eq:thetabdm} imply equation~\eqref{eq:whThk=c} holds exactly.

Second, consider the case
$\nabla^2k(t,x)=\diag(0,\nabla_x^2k(t,x)).$
Then $\det[I_n+\i \nabla^2k(t,x)]=0$, so the DSL is satisfied.
It remains to check that the DSL implies
condition~\eqref{eq:pase}.
Indeed, we have already shown the DSL implies condition~\eqref{eq:nxse}. But
$\nabla_x^2k(t,x)\in \Int F_{c-\pi/2}$
implies that
$\wtThk(t,x)=\pi/2+\tr\tan^{-1}(\nabla_x^2k(t,x))>c$,
which implies $\nabla^2k(t,x)\in\calF_c$.
Similarly,
$-\nabla_x^2k(t,x)\in \Int F_{-c-\pi/2}$
implies that $\wtTh(-\nabla^2k(t,x)) = \pi/2 + \tr\tan^{-1}(-\nabla_x^2k(t,x))>-c$,
which implies $-\nabla^2k(t,x)\in\calF_{-c}$.
\end{proof}

Motivated by Corollary \ref{CtwoDSLCor}, in the next two sections,
we define weak solutions for the DSL in terms of subequations.

\section{Dirichlet duality theory }
\label{DDTheoreySec}


\HL \cite{HL} develop a systematic way to solve, in a viscosity sense, the Dirichlet problem for possibly degenerate elliptic equations
involving only the Hessian by reformulating the problem in terms of a subequation $F$.
In this section we recall their main result and definitions (see also \cite{Nirenberg} for an exposition).

\subsection{Subequations and their associated functions}
\lb{DirichletSetsSubS}

In the rest of the article, $F$ will always stand for a subequation. Let $X$ denote an open connected subset of $\R^n$. A function $u \in \USC(X)$ is {\it subaffine}, denoted $u\in\SA(X)$, if for all affine functions $a$ and $K\subset X$ compact, $u\le a$ on $\del K$ implies $u\le a $ on $K$. \HL~\cite[Prop. 2.3]{HL} prove that
\begin{equation}\label{eq:mp}
u \in \SA(X) \qquad \Rightarrow \qquad \sup_X u \leq \sup_{\partial X} u.
\end{equation}
A function $u\in \USC(X)$ is of {\it type $F$}, denoted $u\in F(X)$, if $u+v\in\SA(X)$
for all $v\in C^2(X)$ satisfying $\n^2 v(x)\in \tF$, for all $x\in X$. From now on, unless stated otherwise, we assume that $X$ is bounded.

The definition of a type $F$ function is natural in the sense that
$u\in F(X)\cap C^2(X)$ iff $u \in C^2(X)$ and $\nabla^2u(x)\in F$ for all $x\in X$. To see this,
it suffices to note that on the level
of matrices, $A\in F$ iff $A+\wt F\subset \wt \P$,
because, by duality, the latter is equivalent to  $\P\subset \wt{A+\wt F}
=F-A$, but $A+\P\subset F$ is equivalent to $A\in F$.
\HL prove~\cite[Theorem 6.5]{HL} that actually
\beq\label{SAThm}
F(X)+\tF(X)\subset \SA(X).
\eeq
(The original definition only implies
this inclusion if one of the sets on the left is intersected with
$C^2(X)$) .

Note that $\P(X)\cap C^2(X)$ consists of the $C^2$ convex functions,
and
\[
\SA(X)\cap C^2(X)=\{u\in C^2(X)\,:\, \nabla^2 u \h{\ is nowhere negative}\}.
\]
\HL show~\cite[Theorem 4.5]{HL} that
$\P(X)$ consists of the convex functions, while $\wt \P(X)=\SA(X)$.




\subsection{Boundary convexity}
\label{BoundaryConvSubSec}

Let $\R_{>0}:=\{x\in\R\,:\, x>0\}$.
Let $F$ be a subequation.
Define the {\it ray set associated to $F$} by
$$
\vF:=\cl{\{A\in\Sym(\R^n)\,:\,\R_{>0} A\cap F\not=\emptyset\}}.
$$
Then $\vF$ is a subequation satisfying
$A\in \vF$ iff $tA\in\vF$ for all $t\ge0$ \cite[Proposition 5.11]{HL}.

Suppose $\del X$ is smooth. Recall that $\II$, the second fundamental form of $\del X$ with respect to
 the inward pointing unit normal $N$, is a map
$\II_x: T_x\del X\ra T_x\del X$ defined by $dN_x(V)=\II_x(V) \mod N_x$,
for any $V\in T_x\del X$.
A domain $X\subset \R^n$ with smooth boundary is called {\it strictly $\vF$ convex} or simply {\it strictly $\vF$} if
the second fundamental form of $\partial X$ with respect to the inward pointing
unit normal satisfies
\beq
\label{IIvFEq}
\II_x = B|_{T_x\del X} \quad \text{for some} \quad B \in \Int\vF.
\eeq

Boundary convexity can also be formulated in terms of defining functions.
A smooth function $\rho\in C^\infty(\overline{ X})$ is
called a {\it defining function (DF)} if $ X=\{\rho<0\}$ and $\nabla\rho$
is nowhere zero on $\del X$.
Then, according to \cite[Corollary 5.4]{HL},
$ X$ is strictly $\vF$ iff there exists a smooth
DF $\rho$ such that for each $x\in \del X$,
\[
\nt\rho(x)|_{T_x\del X} = B|_{T_x\del X}\quad \h{for some\ } B\in \Int\vF.
\]
One checks that if such a DF exists, then
any DF has this property \cite[Lemma 5.2]{HL}.

\subsection{The \texorpdfstring{$F$}{F}-Dirichlet problem}
\label{FDPSubSec}

Let $F$ be a subequation, let $X\subset \RR^n$ be a bounded domain,
and let $\vp\in C^0(\del X)$.
The {\it $F$-Dirichlet problem for $( X,\vp)$} is the problem of finding a function $u\in C^0(\overline{X})$ solving
\beq
\label{FDPEq}
u\in F( X), \quad -u\in\tF( X), \quad u|_{\del X}=\vp.
\eeq
A function $u$ on $\overline{X}$ is called a \emph{subsolution} if $u \in F(X)\cap \USC(\overline{X})$ and $u|_{\del X}\leq\vp$. It is called a \emph{supersolution} if $-u\in\tF( X)\cap \USC(\overline{X})$ and $u|_{\del X}\geq \vp.$

To see how the $F$-Dirichlet problem relates to the usual Dirichlet problem
in a particular example,
consider the Laplace equation
$\tr \nabla^2 u = 0.$ The set $F = \{A \in \Symn\,:\,\tr A \geq 0 \}$ is a subequation,
in fact equal to $\tF$, and for $u \in C^2(X)$ the notions of sub/supersolution associated to $F$ coincide with the usual notion of a sub/supersolution for the Laplace equation.

The main existence  result of \HL is as follows \cite[Theorem 6.2]{HL}.
\begin{theorem}
\label{HLThm}
Let $ X\subset \R^n$ be a bounded domain with $\del X$ smooth
and both strictly $\vF$ and $\vtF$ convex. Then the \FDP \eqref{FDPEq}
admits a unique solution in $C^0(\overline{X})$.
\end{theorem}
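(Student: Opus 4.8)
This is \HLno's existence theorem, so the plan is simply to reproduce the structure of their Perron-method argument using the machinery already recalled above; I would organize it as follows, with uniqueness being nearly immediate and existence requiring some work.

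\emph{Uniqueness.} If $u_1$ solves the $F$-Dirichlet problem for $(X,\vp)$ and $u_2$ is any supersolution, then $u_1\in F(X)$ and $-u_2\in\tF(X)$, so $u_1-u_2\in\SA(X)$ by \eqref{SAThm}; since $u_1-u_2\le 0$ on $\del X$, the maximum principle \eqref{eq:mp} gives $u_1\le u_2$ on $X$. A solution is both a subsolution and a supersolution, so applying this twice with the roles exchanged shows any two solutions with boundary value $\vp$ coincide.

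\emph{Existence.} Run the Perron method: set
$$
u(x)=\sup\{\,w(x)\,:\,w\in F(X)\cap\USC(\overline X),\ w|_{\del X}\le\vp\,\},
$$
and pass to the upper semicontinuous regularization $u^*$. Four points must be checked. (1) For each $\zeta\in\del X$ one builds a lower barrier $\beta^-_\zeta\in F(X)\cap C^0(\overline X)$ and an upper barrier $\beta^+_\zeta$ with $-\beta^+_\zeta\in\tF(X)\cap C^0(\overline X)$, both equal to $\vp(\zeta)$ at $\zeta$ and satisfying $\beta^-_\zeta\le\vp\le\beta^+_\zeta$ on $\del X$; this is exactly where strict $\vF$ and $\vtF$ convexity of $\del X$ enters, the barriers being constructed from a defining function $\rho$ by adding a large negative (resp.\ positive) multiple of $|x-\zeta|^2$ and a large multiple of $\rho$ and checking via \eqref{IIvFEq} that the Hessian lies in $\Int\vF$ (resp.\ $\Int\vtF$) near $\zeta$, then globalizing by a max/min with a constant. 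The barriers make the Perron family nonempty and, by comparison, bounded above. (2) $u^*\in F(X)$, since type-$F$ functions are stable under the upper semicontinuous regularization of a locally bounded supremum, an elementary property of subequations in \cite{HL}. (3) $-u^*\in\tF(X)$: this is the delicate ``bump'' step — were $-u^*$ to fail type $\tF$ near an interior point, one could push $u^*$ upward on a small ball while staying in $F(X)$ and below $\vp$ on $\del X$, contradicting maximality; the comparison principle and \eqref{SAThm} are used again here. (4) The barriers squeeze $u^*$ at the boundary, giving $u^*\in C^0(\overline X)$ with $u^*|_{\del X}=\vp$, so $u^*$ solves \eqref{FDPEq}.

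\emph{Main obstacle.} The two substantive steps are the barrier construction, which is precisely where both strict $\vF$ and strict $\vtF$ convexity of $\del X$ are indispensable and which \S\ref{DDweakSec} will later have to adapt to domains with corners such as $\calD$, and the supersolution property $-u^*\in\tF(X)$, the standard but technically delicate Perron perturbation argument. Both are carried out in \cite{HL}, so here one would simply cite \cite[Theorem 6.2]{HL}.
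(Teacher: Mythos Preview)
Your proposal is correct and matches the paper's treatment: Theorem~\ref{HLThm} is stated as \HLno's result and the paper simply cites \cite[Theorem 6.2]{HL} without giving a proof, exactly as you conclude. The sketch you provide is an accurate outline of the \HL argument (Perron envelope, barrier construction via strict $\vF$/$\vtF$ convexity, closure under usc-sup, bump lemma for the supersolution property, and comparison for uniqueness), so nothing further is needed here.
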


\subsection{Properties of functions of type \texorpdfstring{$F$}{F}}
We recall several results used by \HL in the proof of Theorem~\ref{HLThm} that will be important in the arguments presented below. Let $X \subset \R^n$ be a bounded domain and let $F$ be a Dirichlet set. The upper semi-continuous regularization of a function $u$ will be denoted by
$$
\usc u(x):=\lim_{\delta\ra0}\sup_{y\in X\atop |y-x|<\delta}u(y).
$$

\HL show~\cite[(4), p.406]{HL} that if $B \in \Sym(\R^n)$, then there exists $t_0 \in \R$ such that $B + tI \in F$ iff $t \geq t_0.$ As a consequence we obtain two properties of functions on $X$ of type $F$:
\begin{enumerate}[label = (S\arabic*)]
\item\label{it:quad}
There exists a constant $C > 0$ depending only on $F$ such that the function $C|x|^2$ belongs to $F(X).$
\item\label{it:ubd}
There exists a constant $C > 0$ depending only on $F$ such that for all $u \in F(X),$ we have $u + C|x|^2 \in \SA(X).$
\end{enumerate}
Property~\ref{it:ubd} follows from property~\ref{it:quad} applied to $\tF$ and the definition of $F(X)$~\cite[Lemma~6.6]{HL}.
The following properties of functions on $X$ of type $F$ are due to \HL~\cite[p. 410]{HL}.
\begin{enumerate}[label = (S\arabic*),resume]
\item\label{it:aff}
If $u \in F(X)$ and $a$ is affine, then $u + a \in F(X).$
\item\label{it:dl}
If $u_j \in F(X)$ satisfy $u_j \geq u_{j+1},$ then $\lim_{j \to \infty} u_j \in F(X).$
\item\label{it:uniflimit}
If $u_j \in F(X)$ converge in $C^0$ on compact sets
to $u$, then $u \in F(X).$
\item\label{it:ue}
Suppose $\calE \subset F(X)$ is locally uniformly bounded above. Let $u$ be defined by
\[
u(x) = \sup_{f \in \calE} f(x).
\]
Then $\usc u \in F(X).$
\item\label{it:hess}
If $u\in F(X)$ is twice differentiable at $x \in X$, then $\nabla^2 u(x) \in F.$
\end{enumerate}
The uniqueness part of Theorem~\ref{HLThm} is actually a special case of the following result~\cite[Theorem 6.3]{HL}, which does not require any conditions on $\del X.$
\begin{theorem}\label{tm:un}
If $u,v,$ are two solutions of the $F$-Dirichlet problem for $(X,\vp),$ then $u=v.$
\end{theorem}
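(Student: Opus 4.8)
The plan is to prove the comparison statement $u\le v$; running the same argument with the roles of $u$ and $v$ reversed then gives $u=v$. Everything rests on two facts already available: the inclusion \eqref{SAThm}, which asserts $F(X)+\tF(X)\subset\SA(X)$ with no $C^2$ hypothesis whatsoever, and the maximum principle \eqref{eq:mp} for subaffine functions.

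Concretely, I would set $w:=u-v$. Since $u$ is a solution of the $F$-Dirichlet problem for $(X,\vp)$ we have $u\in F(X)$, and since $v$ is a solution we have $-v\in\tF(X)$; hence $w=u+(-v)\in\SA(X)$ by \eqref{SAThm}. Because $u,v\in C^0(\overline X)$, the difference $w$ is continuous on $\overline X$, and on $\del X$ it vanishes identically, $w=\vp-\vp\equiv 0$. Now \eqref{eq:mp} gives $\sup_X w\le\sup_{\del X}w=0$, so $u\le v$ on $X$ and hence on $\overline X$ by continuity. Interchanging $u$ and $v$ yields $v\le u$, and therefore $u=v$. Note that no regularity or convexity assumption on $\del X$ is used, consistent with the statement.

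The only subtle point — and the reason the argument is not completely automatic — is that the bare definition of a function of type $F$ only guarantees $u+\psi\in\SA(X)$ for $\psi\in C^2(X)$ with $\nabla^2\psi\in\tF$, while here neither $u$ nor $v$ is assumed twice differentiable. The strengthening \eqref{SAThm}, i.e.\ \cite[Theorem 6.5]{HL}, is exactly what is needed so that subaffineness can be applied directly to the difference of two merely continuous solutions; with that in hand the comparison, and hence uniqueness, is immediate. I do not expect any genuine obstacle beyond correctly invoking this upgraded inclusion.
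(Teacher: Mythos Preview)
Your argument is correct and is exactly the proof the paper has in mind: it says explicitly that Theorem~\ref{tm:un} ``is immediate from inclusion~\eqref{SAThm} and the maximum principle~\eqref{eq:mp},'' which is precisely the comparison $u+(-v)\in\SA(X)$ followed by $\sup_X(u-v)\le\sup_{\del X}(u-v)=0$ and the symmetric inequality. Your remark that the full strength of~\eqref{SAThm} (no $C^2$ assumption on either summand) is essential here is also spot-on.
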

The proof of Theorem~\ref{tm:un} is immediate from inclusion~\eqref{SAThm} and the maximum principle~\eqref{eq:mp}.

We will also need the following theorem, due to \HL~\cite[Corollary 7.5]{HL}, which is a by-product of the proof inclusion~\eqref{SAThm}. A function $u$ on $X \subset \R^n$ is called \emph{$\lambda$-quasi-convex} if $v = u + \frac{1}{2}\lambda|x|^2$ is convex. A fundamental theorem of Alexandrov says that the second derivative of a quasi-convex function exists almost everywhere.
\begin{theorem}\label{tm:ae}
Suppose $u$ is locally quasi-convex on $X$ and $\nabla^2 u(x) \in F$ for almost every $x.$ Then $u \in F(X).$
\end{theorem}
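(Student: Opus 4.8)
The plan is to show that $u$ satisfies the pointwise \emph{viscosity} $F$-subsolution condition — that $\nabla^2\phi(x_0)\in F$ whenever $\phi\in C^2$ and $u-\phi$ has a local maximum at $x_0$ — and then to invoke the equivalence, due to Harvey--Lawson \cite{HL}, between this condition and membership in $F(X)$ in the sense of the subaffine definition recalled above (property~\ref{it:hess} is one half of it). The obvious alternative, mollifying $u$ and appealing to property~\ref{it:uniflimit}, is not available here: since $F$ is not assumed convex, a convex combination of matrices in $F$ need not lie in $F$, and the Hessian of a mollification $u*\rho_\eps$ is exactly such an average of the values $\nabla^2u(y)$ (plus the convolution of the singular part of $\nabla^2u$, which, $u$ being quasi-convex, is nonnegative and so does land in $\P$). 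Since the conditions in play are local, we may assume $u$ is quasi-convex near the point under consideration; by the theorem of Alexandrov quoted above, $u$ is twice differentiable on a full-measure set, and by hypothesis $\nabla^2u(x)\in F$ for $x$ in a further full-measure set $N$.

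The heart of the matter — and the device that replaces averaging — is Jensen's lemma. Given $\phi\in C^2$ with $u-\phi$ attaining a local maximum at $x_0$, first replace $\phi$ by $\phi+\delta|x-x_0|^2$ so that this maximum becomes strict; this perturbs $\nabla^2\phi(x_0)$ only by $2\delta I$. The function $w:=u-\phi-\delta|x-x_0|^2$ is again quasi-convex (quasi-convexity is stable under adding a $C^2$ function, after enlarging the constant) and has a strict local maximum at $x_0$. Jensen's lemma then provides, for each small radius $r>0$, a set of positive Lebesgue measure of points $y\in B_r(x_0)$ at which some affine perturbation $w+\langle p,\,\cdot\,\rangle$, with $|p|$ arbitrarily small, has a local maximum; at almost every such $y$ the function $w$, hence $u$, is twice differentiable and $\nabla^2w(y)\le 0$, i.e.\ $\nabla^2u(y)\le\nabla^2\phi(y)+2\delta I$. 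Choosing $y$ also in $N$, which is possible since the Jensen set has positive measure, we get $\nabla^2\phi(y)+2\delta I=\nabla^2u(y)+P$ for some $P\in\P$, whence $\nabla^2\phi(y)+2\delta I\in F+\P\subset F$ because $\nabla^2u(y)\in F$.

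It remains only to pass to the limit: taking $r=r_j\downarrow 0$ and $\delta=\delta_j\downarrow 0$ along a sequence, the resulting points $y_j\in B_{r_j}(x_0)$ converge to $x_0$, and since $\phi\in C^2$ we have $\nabla^2\phi(y_j)+2\delta_jI\to\nabla^2\phi(x_0)$; as $F$ is closed, $\nabla^2\phi(x_0)\in F$. Thus $u$ is a viscosity $F$-subsolution, and therefore $u\in F(X)$. The same Jensen-type mechanism, applied to sup- and inf-convolutions of a type-$F$ function and a type-$\tF$ function, is what underlies inclusion~\eqref{SAThm}, which is why this statement appears in \cite{HL} as a by-product of that proof. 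The main obstacle, to reiterate, is the absence of convexity of $F$: one cannot average, only use the order relation $F+\P\subset F$ together with closedness of $F$, and the touching point $x_0$ in the viscosity test may well lie in the null set where the hypothesis $\nabla^2u\in F$ says nothing; Jensen's lemma is precisely the tool that transfers the almost-everywhere information to the test function at $x_0$ through a limit.
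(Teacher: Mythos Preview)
Your argument is correct and is essentially the standard Jensen's lemma approach. Note, however, that the paper itself does not prove this theorem: it simply quotes it as \cite[Corollary~7.5]{HL}, remarking that it is a by-product of the proof of inclusion~\eqref{SAThm}. Your proof sketch is in fact a faithful reconstruction of the Harvey--Lawson argument behind that corollary --- the viscosity test function, the strict-maximum perturbation, Jensen's lemma to produce a positive-measure set of ``good'' touching points, and the passage to the limit via $F+\P\subset F$ and closedness of $F$ --- so there is no discrepancy in method, only in presentation: you have unpacked what the paper leaves as a citation. Your closing remark correctly identifies why the result arises as a by-product of~\eqref{SAThm}.
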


\section{Dirichlet duality theory with weak boundary assumptions}
\label{DDweakSec}

Unfortunately, Theorem \ref{HLThm} does not apply in our setting
since the domain $\calD=(0,1)\times D$ (recall \eqref{calDEq})
is not strictly $\vec\calF_c$ convex.
The purpose of the present section is to generalize the work of \HL
described in \S\ref{DDSec}
to allow for weaker boundary assumptions.
This section can be read independently
of the rest of the article since it is applicable to arbitrary subequations.

In \S\ref{CvxMultSubSec} we extend the notion of strict convexity
to domains with corners, and construct corresponding
boundary defining functions.
In \S\ref{DirichletWeakBndrySubSec} we prove a result (Theorem \ref{tm:subs})
concerning the $F$-Dirichlet problem that
generalizes Theorem~\ref{HLThm} by replacing strict boundary convexity with assumptions on the boundary values.

\subsection{Boundary convexity for nonsmooth boundary}
\label{CvxMultSubSec}
In the following, we use several definitions concerning manifolds with corners. We follow the conventions of Joyce's article~\cite{Jo09}, to which we refer the reader for further details. Recall that the boundary $\partial X$ of a manifold with corners $X$ is itself a manifold with corners, equipped with a map
\[
i_X : \partial X\to X,
\]
which may not be injective. For example, think of $X=[0,1]\times[0,1]$
for which $\del X$ consists of four copies of $[0,1]$, so
the inverse image of $(0,0)\in X$ consists of two points in $\del X$. We say that $X$ is a manifold with embedded corners if $\partial X$ can be written as the disjoint union of a finite number of open and closed subsets on each of which $i_X$ is injective. For example, a teardrop shape is a manifold with corners, but not a manifold with embedded corners.
A function $\vp$ on $\del X$ is called \emph{consistent} if it
is constant on fibers of $\iota_X$.
Given a function $u : X \to \R$, we define its restriction to $\del X$ by
$u|_{\partial X}:= u \circ i_X.$

Let $ X \subset \R^n$ be a domain such that $\overline  X$ is a compact manifold with embedded corners. We denote by $\del X$ the boundary of $\overline{X}$ considered as a manifold with corners. In particular, each component of $\partial X$ is an embedded submanifold with corners of $\R^n.$ Let $\partial  X_i$ denote a connected component of $\partial  X.$

\begin{df}
\label{GlobalDFDef}
The boundary component $\partial  X_i$ is called strictly $\vF$ convex if
$$
\II|_{T_x\del X_i} = B|_{T_x\del X_i}\quad \h{for some\ } B\in \Int\vF,
$$
holds at each $x\in \partial  X_i$.
\end{df}
A smooth function $\rho$ defined near a point $x\in \del X_i$ is
said to be a local defining function for $\del X_i$ near $x$ if
on some neighborhood $U$ of $x$ we have $X\cap U=\{\rho <0\}$
and $\nabla \rho\not=0$. By the proof of \cite[Lemma 5.3]{HL}, the boundary component $\del X_i$ is strictly $\vF$ convex iff
there exists a local defining function for $\del X_i$ satisfying
\[
\nt\rho(x)|_{T_x\del X_i} = B|_{T_x\del X_i}\quad \h{for some\ } B\in \Int\vF,
\]
near each $x\in \del X_i$.
One checks that if such a DF exists, then
any DF has this property \cite[Lemma 5.2]{HL}.

\begin{df}
A function $\rho \in C^\infty(\overline X)$ is called a {global defining function} for $\partial X_i$ if
\beq\lb{PartialDFEq}
\rho|_{\overline{X}\setminus \partial X_i} < 0,
 \qquad \rho|_{\partial  X_i} = 0,\qquad \n \rho|_{\partial  X_i} \neq 0.
\eeq
\end{df}

Note that
\beq\label{rhonegativeEq}
\rho|_{\overline X} \le 0,
\eeq
but $\rho$ only vanishes on $\partial X_i.$

The next result shows that Definition \ref{GlobalDFDef} can be interpreted
in terms of strictly $\vF$ convex
global defining functions, just as in the setting of a smooth boundary described in
\S\ref{BoundaryConvSubSec}. Moreover, it shows that
an analogue of \cite[Theorem 5.12]{HL} concerning the existence
of uniformly $\vF$ convex defining functions holds in our setting.

\begin{prop}
\lb{StrictFPartialProp}
If the boundary component $\partial X_i$ is strictly $\vF$ convex, then there exists a global defining function $\rho \in C^\infty(\overline  X)$ for $\partial X_i$ that is stricty type $\vF$.
Moreover, there exists $\eps, R>0$ such that 
\beq\lb{StrictFPartialEq}
C\left(\rho - \epsilon |x|^2\right) \in F(\overline{X}) \text{ for all } C \geq R.
\eeq
\end{prop}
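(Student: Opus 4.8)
The plan is to construct $\rho$ locally near $\partial X_i$ using strict $\vec F$ convexity, extend it globally, and then exploit properties \ref{it:quad} and the structure of $\vec F$ to absorb the quadratic correction. First I would use the local defining function characterization of strict $\vec F$ convexity: near each $x \in \partial X_i$ there is a local defining function $\rho_x$ with $\nabla^2\rho_x(x)|_{T_x\partial X_i} = B|_{T_x\partial X_i}$ for some $B \in \Int\vec F$. By adding a large multiple of $\rho_x^2$ (which does not change the first-order data but adds a positive multiple of $\nabla\rho_x \otimes \nabla\rho_x$ to the Hessian on $\partial X_i$) and shrinking neighborhoods, one arranges that $\nabla^2\rho_x \in \Int\vec F$ on all of $T_x\R^n$, not just on the tangent space to the boundary; this is exactly the trick in the proof of \cite[Theorem 5.12]{HL}. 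Then I would patch these local defining functions together with a partition of unity subordinate to a finite cover of the compact set $\partial X_i$, being careful that $\vec F$ is a convex cone (since $A \in \vec F$ iff $tA \in \vec F$ for all $t \ge 0$, and $\vec F + \P \subset \vec F$), so convex combinations and positive scalings of elements of $\Int\vec F$ stay in $\Int\vec F$. Away from $\partial X_i$ one simply takes $\rho$ to be a suitable negative smooth function, glued in so that \eqref{PartialDFEq} and \eqref{rhonegativeEq} hold.

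Next I would prove \eqref{StrictFPartialEq}. The point is that on a neighborhood $U$ of $\partial X_i$ we have $\nabla^2\rho \in \Int\vec F$, so by openness there is $\epsilon_0>0$ with $\nabla^2\rho - \epsilon_0 I \in \vec F$ on $\overline U$, hence (using that $\vec F$ is a cone and $\nabla^2(\rho - \tfrac{\epsilon_0}{2}|x|^2) = \nabla^2\rho - \epsilon_0 I$) the function $\rho - \tfrac{\epsilon_0}{2}|x|^2$ has Hessian in $\vec F \subset F$... wait, one must be slightly careful that $\vec F \subset F$ need not hold; rather the relevant fact is that $\vec F$ being the ray set means membership of the Hessian in $\Int \vec F$ gives strict positivity of the relevant symbol, and that for $C$ large $C(\nabla^2\rho - \epsilon I)$ can be pushed into $F$ itself using property \ref{it:quad}: since $C|x|^2 \in F(\overline X)$ for $C$ large and $F$ is closed under adding $\P$, and since on the compact complement $\overline X \setminus U$ the Hessian of $\rho$ is bounded, one chooses $C \ge R$ so that $C\nabla^2\rho + C'I \succeq$ a fixed large positive multiple of $I$ there, placing $C(\rho - \epsilon|x|^2) \in F$ pointwise, hence in $F(\overline X)$ by property \ref{it:hess} combined with the $C^2$ characterization of $F(X)$.

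The main obstacle I anticipate is the gluing step and the bookkeeping on the region where $\rho$ transitions from the boundary-adapted local model to the "bulk" negative function: one needs the Hessian of the glued $\rho$ to remain controlled (in particular, bounded below, and inside $\Int\vec F$ near the boundary) while simultaneously enforcing the sign conditions $\rho < 0$ off $\partial X_i$, $\rho = 0$ and $\nabla\rho \neq 0$ on $\partial X_i$. This is routine in spirit — it is the manifold-with-corners analogue of \cite[Theorem 5.12]{HL} applied componentwise to the embedded component $\partial X_i$ — but the corners mean the local models near a corner point of $\partial X_i$ must be chosen compatibly. The saving grace is that $\partial X_i$ is assumed to be an \emph{embedded} submanifold with corners of $\R^n$, so the tubular-neighborhood/collar constructions go through, and since we only need strict $\vec F$ convexity of the single component $\partial X_i$ (not the whole boundary simultaneously), there is no interaction to worry about between $\partial X_i$ and the other boundary components. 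Once the defining function is in hand, \eqref{StrictFPartialEq} is a compactness-plus-conicity argument as sketched, with property \ref{it:quad} doing the essential work of absorbing the error on $\overline X \setminus U$.
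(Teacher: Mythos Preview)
Your patching argument has a genuine gap: the ray set $\vec F$ is \emph{not} a convex cone in general. It is a cone and satisfies $\vec F + \P \subset \vec F$, but that does not force $\vec F + \vec F \subset \vec F$. For a concrete counterexample take $F = \widetilde\P$; then $\vec F = \widetilde\P$, and $\diag(1,-10),\,\diag(-10,1)\in\widetilde\P$ while their midpoint $\diag(-4.5,-4.5)$ is negative definite and hence not in $\widetilde\P$. So once you have arranged $\nabla^2\rho_x \in \Int\vec F$ locally, the convex combination coming from the partition of unity need not lie in $\Int\vec F$ (and there are additional cross terms $2\nabla\alpha_j\otimes\nabla\rho_j$ to worry about). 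The paper avoids this entirely by reversing the order of operations: first patch \emph{arbitrary} local defining functions $f_j = -x_l\circ\psi_j^{-1}$ with a partition of unity to obtain one global defining function $\rho$ for $\partial X_i$ (no convexity needed --- only that a convex combination of defining functions is again a defining function), and only then apply the $\rho\mapsto\rho + C\rho^2$ trick to this \emph{single} global $\rho$. Since any defining function for a strictly $\vec F$ convex boundary has Hessian restricting to some $B|_{T_x\partial X_i}$ with $B\in\Int\vec F$ (\cite[Lemma~5.2]{HL}), this makes $\rho + C\rho^2$ strictly $\vec F$ near $\partial X_i$ without any convexity hypothesis.

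Your treatment of the region away from $\partial X_i$ is also not quite right. You say ``choose $C\ge R$ so that $C\nabla^2\rho + C'I$ dominates a large multiple of $I$ there,'' but if $\nabla^2\rho$ has a negative eigenvalue in that region (and nothing prevents this), scaling by $C$ makes things worse. The paper instead replaces $\rho$ by $\hat\rho = \max\{\rho,\,\delta|x|^2 - r\}$ with $r,\delta$ chosen so that $\hat\rho = \rho$ near $\partial X_i$ and $\hat\rho = \delta|x|^2 - r$ on the complement of a neighborhood where $\rho$ is strictly $\vec F$; after smoothing the max, $\hat\rho\in\Int\vec F(\overline X)$ globally, and then \eqref{StrictFPartialEq} follows as in \cite[Theorem~5.12]{HL}.
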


\bpf
We start by constructing a global boundary defining function for $\partial X_i.$
Let $\RR_+:=\{x\in\RR\,:\, x\ge 0\}$.
As a manifold with corners, $\overline X$ comes equipped with coordinate charts
the domain of each of which is an open set in $\RR_+^n$ \cite{Jo09}.
Let $U_j\subset \RR^n_+$
and let
\beq\label{psiiEq}
\psi_j:U_j\ra \overline X,
\eeq
be a
collection of charts
such that $i_X(\del X_i)\subset \cup \psi_j(U_j)$.
Let
$$
W_j:=\psi_j(U_j).
$$

We start by constructing smooth local DFs for $\del X_i$ defined on each $W_j$.
Since $\overline X$ is a manifold with \emph{embedded} corners, there exists $l\in\{1,\ldots,n\}$ such that
$$
\psi_j^{-1}(\del X_i\cap W_j)=
\{x \in U_j\,|\,
x_{l}=0\}.
$$
Define a smooth function on $W_j$ by (recall \eqref{psiiEq})
$$
f_j:= - x_l\circ \psi_j^{-1}.
$$
Note that $\nabla f_j(p) \not = 0$ for $p\in \del X_i\cap W_j$ and $f_j(p) < 0$ for $p \in W_j \cap (\overline X\setminus \del X_i).$

Let $U =\overline{X}\setminus \del X_i.$
Then $\{U, \{W_j'\}_j\}$ is a covering of $\overline X$ by open sets.
Consider a smooth partition of unity
$\alpha_U, \{\alpha_j\}_{j\in\N}$
subordinate to $\{U,\{W_j'\}_j\}$.
Then set
$$
\rho:=\sum\alpha_j f_j -\alpha_U\in C^\infty(\overline X).
$$
By construction,
(i) $\rho$ vanishes on $\del X_i$, (ii) $\rho<0$ on $\overline X \setminus \partial X_i$,
(iii) $\nabla\rho\not=0$ on $\del X_i$.

We now construct a
strictly $\vec F$ global boundary DF by following the
argument of
\cite[Theorem~5.12]{HL}.
First, the argument of \cite{HL} shows that $\tilde \rho  = \rho+C\rho^2$ is a
strictly $\vec F$ local DF on a neighborhood
 of $\del X_i$ for all $C\gg 1$.
Since $\tilde \rho$ is negative in a neighborhood of $\del X_i,$ using a partition of unity argument, we can modify $\tilde \rho$ so it is negative on all of $\overline{X}\setminus \partial X_i$ and still strictly $\vec F$ near $\del X_i.$ Thus, we may replace $\rho$ by a global DF for $\del X_i$, still denoted by $\rho$,
that is strictly $\vec F$ on a neighborhood $W$ of $\del X_i$.
Choose $r>0$ small enough so $\{\rho > -r\}\subset W.$ By compactness of $\overline X,$ choose $\delta>0$ small enough that
$\delta|x|^2-r< 0$ on $\overline X$.
Define
$$
\hat \rho:=\max\{\rho,\delta|x|^2-r\}.
$$
So, $\hat \rho = \rho$ in a neighborhood of $\partial X_i$ where $\rho$ is strictly $\vec F$ and $\hat \rho = \delta |x|^2 - r$ on $\overline X \setminus W.$

We now smooth $\hat \rho$ just as in the proof of~\cite[Theorem 5.12]{HL} to obtain a new $\rho.$
The same arguments as there then prove that
this new $\rho$ has the following properties: (i) it is a global DF for $\partial X_i,$
(ii) $\rho\in C^\infty(\overline {X})\cap \Int \vF(\overline {X})$,
(iii) $\rho$ satisfies \eqref{StrictFPartialEq}.
\epf

\begin{ex}
The reason for considering each boundary component separately is the following example. Take $F = \P$ so $F$-convexity is convexity in the usual sense. Let $f : \R \to \R$ be given by $f(t) = t(t-1)$. Consider $ X = [0,1]^2$. The function $g :  X \to \R$ given by $g(s,t) = f(s)f(t)$ has a saddle point at each corner of $[0,1]^2.$ But any function $\rho : \overline X \to \R$ with $\rho|_X < 0, \rho|_{\partial X} = 0$ and  $\n\rho|_{\Int \partial  X} \neq 0,$ must be approximately $g$ up to rescaling near the corners of $ X.$ Of course, the example generalizes.
\end{ex}

\subsection{The \texorpdfstring{$F$}{F}-Dirichlet problem with weak assumptions on the boundary}
\lb{DirichletWeakBndrySubSec}

A typical result in the theory of degenerate
real/complex \MA equations is
that existence of a convex/psh solution is implied by
existence of a convex/psh subsolution
to the Dirichlet problem that attains the boundary values (see, e.g.,
the discussion in \cite{Nirenberg} for some references).
Theorem \ref{tm:subs} below, based on Harvey--Lawson's theory,
can be considered as a result of this flavor
in the more general setting of subequations.
Thus, for example, Theorem~\ref{tm:subs}
furnishes solutions of the homogeneous
real/complex \MA equation  in all branches.

Let $ X\subset \R^n$ be a bounded domain such that $ \overline  X$ is a manifold with embedded corners.

\begin{df}
Let $\vp \in C^0(\partial X)$ be consistent.
Recall that a subsolution of the
$F$-Dirichlet problem for $( X,\vp)$ is a function $u \in F( X)\cap \USC(\overline{X})$
such that $u|_{\partial  X} \leq \vp$.
A subsolution $u$ for $( X,\vp)$ is {\it $\delta$-maximal at $p\in \partial  X$}
if $u(p)\ge \vp(p)-\delta$, and {\it maximal at $p$} if  $u(p)= \vp(p)$.
\end{df}

\begin{df}
\lb{FvpConvexDef}
We say $\partial  X$ is strictly $(F,\vp)$-convex if we can decompose $\partial X$ as
the disjoint union $A \cup B$ where $A$ and $B$ are unions of components and satisfy the following:
\begin{enumerate}
\item\label{it:A} For each $p \in A$ and $\delta>0$ there exists a $C^0(\overline{X})$ subsolution of the
\FDP for $( X,\vp)$ that is $\delta$-maximal at $p.$
\item\label{it:B}
$B$ is strictly $\vF$ convex.
\end{enumerate}
\end{df}

\begin{rem}
\lb{AutomaticConvexityRemark}
Suppose $F \subset \P$. Then $\tP \subset \tF$, thus $\tP\subset \vtF$.
It follows that any hypersurface
is strictly $\vtF$ convex. Indeed, whatever $\II_x$ may be,
for any $\eps>0$, $\diag(\eps,\II_x)\in\Int\tP$
by definition.
\end{rem}

The main result of this section is the following natural generalization of
Theorem \ref{HLThm} allowing $\overline{X}$ to be a manifold with embedded corners that is not necessarily strictly convex.

\begin{theorem}\label{tm:subs}
Let $F$ be a subequation in $\Sym(\R^n)$, and let $X$ be a bounded domain in $\R^n$
such that $\overline{X}$ is a manifold with embedded corners.
Let $\vp$ be a
consistent continuous function on $\del  X$.
Assume $\partial X$ is strictly $(F,\vp)$-convex and strictly $(\tF,-\vp)$-convex.
Then the \FDP for $( X,\vp)$ admits a unique solution in $C^0(\overline X)$.
\end{theorem}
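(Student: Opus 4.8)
The plan is to follow the classical Perron method as in Harvey--Lawson, adapting their proof of Theorem~\ref{HLThm} to the present weak-boundary setting. Write $\partial X = A \cup B$ for the $(F,\vp)$-convex decomposition, and similarly $\partial X = A' \cup B'$ for the $(\tF,-\vp)$-convex decomposition. Define the Perron family
\[
\calE := \{ u \in F(X) \cap \USC(\overline X) \,:\, u|_{\partial X} \leq \vp \},
\]
i.e. the set of subsolutions, and set $U(x) := \sup_{u \in \calE} u(x)$. The family $\calE$ is nonempty: by Proposition~\ref{StrictFPartialProp} applied to each component of $B$ we obtain strictly type-$\vF$ global defining functions, and by Definition~\ref{FvpConvexDef}\ref{it:A} each point of $A$ admits $\delta$-maximal subsolutions; combining these with property~\ref{it:aff} (adding affine functions) and taking a maximum of finitely many such functions, together with a large negative multiple of $C|x|^2$ from property~\ref{it:quad} shifted down, produces an honest subsolution lying below $\vp$ on all of $\partial X$. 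Uniform boundedness of $\calE$ from above follows from property~\ref{it:ubd} and the maximum principle~\eqref{eq:mp}: each $u \in \calE$ satisfies $u + C|x|^2 \in \SA(X)$, hence $u \leq \sup_{\partial X}(u + C|x|^2) - C|x|^2_{\min} \leq \sup_{\partial X}\vp + \text{const}$.

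Next I would show $\usc U \in F(X)$ by property~\ref{it:ue}, and then argue $U$ itself is upper semicontinuous up to the boundary with $U|_{\partial X} = \vp$. For the boundary values, there are two mechanisms: on $B$ (and on $A'$, $B'$ respectively for the dual problem) one uses barriers built from the strictly $\vF$ defining functions of Proposition~\ref{StrictFPartialProp} exactly as in Harvey--Lawson to force $\limsup_{x \to p} U(x) \leq \vp(p)$ and, from a supersolution barrier, $\liminf_{x\to p} U(x) \geq \vp(p)$; on $A$ the $\delta$-maximality hypothesis directly gives, for every $\delta > 0$, a subsolution in $\calE$ with value $\geq \vp(p) - \delta$ at $p$, whence $U(p) \geq \vp(p)$, while the upper bound $U(p) \leq \vp(p)$ comes from the dual barrier since $p$ lies in some component of $A' \cup B'$ — here one must use the hypothesis that $\partial X$ is \emph{also} strictly $(\tF,-\vp)$-convex, so every boundary point is covered by a barrier from at least one of the two structures. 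I would then upgrade $\usc U = U$ using these one-sided bounds, so $U \in C^0(\overline X)$ with $U|_{\partial X} = \vp$.

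The remaining and most substantial step is to prove $U$ is a \emph{solution}, i.e. that also $-U \in \tF(X)$. This is the "bump" argument of Harvey--Lawson: if $-U \notin \tF(X)$ at some interior point, one can locally perturb $U$ upward by a small $C^2$ bump that keeps it in $\calE$ but strictly exceeds $U$ somewhere, contradicting maximality. Making this work requires the local solvability of the $F$-Dirichlet problem on small balls (where strict $\vF$ and $\vtF$ convexity of the sphere is automatic, or at least holds after rescaling — one invokes Theorem~\ref{HLThm} on a small ball, which is strictly convex in every sense needed), plus the gluing properties of type-$F$ functions, namely that the pointwise max of two type-$F$ functions is type $F$ and that property~\ref{it:dl}/\ref{it:uniflimit} allow passing to limits. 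I expect this bump/Perron-maximality step to be the main obstacle, since it is where the subequation structure~\eqref{DirichletSetPropEq}, the duality $\wt{\wt F} = F$, and inclusion~\eqref{SAThm} all get used in concert; however, none of it interacts with the corners, so it goes through essentially verbatim from \cite{HL}. Uniqueness is immediate from Theorem~\ref{tm:un}, which requires no boundary hypotheses at all.
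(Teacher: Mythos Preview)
Your proposal is correct and follows essentially the same Perron-method approach as the paper. Two minor simplifications worth noting: nonemptiness of $\calE$ is immediate from property~\ref{it:quad} alone (take $C_1|x|^2 - C_2$), without combining defining functions and $\delta$-maximal subsolutions; and the paper streamlines the boundary analysis by first proving a lemma (Lemma~\ref{lm:dss}) that any strictly $\vF$-convex boundary component admits $\delta$-maximal $C^0$ subsolutions, so that both cases of Definition~\ref{FvpConvexDef} are handled uniformly via $\delta$-maximal subsolutions rather than by separate barrier constructions.
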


Before proving Theorem~\ref{tm:subs}, we prove the following lemma, which builds on an idea in the proof of~\cite[Lemma 6.8]{HL}.

\begin{lemma}\label{lm:dss}
Let $\vp$ be a consistent continuous function on $\del X.$ Let $x_0$ be a point of a boundary component $\del X_i \subset \del X$ that is strictly $\vF$ convex. Then there exists a $C^0(\overline X)$ subsolution $w$ of the \FDP for $(X,\vp)$ that is $\delta$-maximal at $p.$
\end{lemma}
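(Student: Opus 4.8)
The plan is to produce the subsolution $w$ as a translate of a strictly $\vF$ global defining function for $\del X_i$, whose existence is guaranteed by Proposition~\ref{StrictFPartialProp}. First I would fix $\delta > 0$ and the point $p = x_0 \in \del X_i$. Since $\vp$ is continuous on $\del X$, I can find a smooth (or at least continuous) function $\vp_0 \in C^\infty(\overline X)$ — e.g. a smooth extension of $\vp$, or just an affine function — with $\vp_0(p) = \vp(p)$ and $\vp_0|_{\del X} \le \vp + \delta/2$ near $p$; more carefully, by uniform continuity of $\vp$ and compactness of $\del X$, choose $\vp_0$ affine with $\vp_0(p) \ge \vp(p) - \delta/2$ and then arrange (shrinking a neighborhood) control of $\vp_0$ against $\vp$. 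The point of $\vp_0$ being affine is that affine functions lie in $F(X)$ by property~\ref{it:aff} (indeed $\nabla^2$ of an affine function is $0 \in \del F \subset F$, so $\vp_0 \in F(\overline X)$), and translating/adding them preserves type $F$.

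Next, let $\rho \in C^\infty(\overline X)$ be the strictly $\vF$ global defining function for $\del X_i$ furnished by Proposition~\ref{StrictFPartialProp}, so $\rho|_{\del X_i} = 0$, $\rho < 0$ on $\overline X \setminus \del X_i$, and $C(\rho - \epsilon|x|^2) \in F(\overline X)$ for all $C \ge R$. I would then set
\[
w := \vp_0 + C\bigl(\rho - \epsilon |x|^2\bigr) + C\epsilon\,(\diam \overline X)^2
\]
for a suitable large constant $C \ge R$, so that the harmless nonnegative shift $C\epsilon(\diam \overline X)^2$ makes $\rho - \epsilon|x|^2 + \epsilon(\diam \overline X)^2 \ge 0$ with equality only where $\rho = 0$ and $|x|$ is maximal — in general this is too crude, so instead I would absorb the $-\epsilon|x|^2$ term differently: write $w = \vp_0 - \epsilon C(|x|^2 - |p|^2) + C\rho$ and note $C(\rho - \epsilon|x|^2) \in F$ means $w$ differs from an element of $F(\overline X)$ by an affine function, hence $w \in F(\overline X)$ by~\ref{it:aff}. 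On $\del X_i$ we have $\rho = 0$, so $w|_{\del X_i} = \vp_0 - \epsilon C(|x|^2 - |p|^2)$, which at $x = p$ equals $\vp_0(p) = \vp(p)$; and on the rest of $\del X$, since $\rho < 0$, by taking $C$ large the term $C\rho$ drives $w$ far below $\vp$, while near $p$ on $\del X_i$ the term $-\epsilon C(|x|^2 - |p|^2)$ only helps (it is $\le 0$ for $|x| \ge |p|$; for $|x| < |p|$ it is positive but $O(\epsilon C)$-small over a small neighborhood, controlled by first shrinking the neighborhood and then choosing $C$, which is the delicate ordering of quantifiers).

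The main obstacle, then, is the bookkeeping of constants and neighborhoods: one must choose, in order, (i) a neighborhood $V$ of $p$ in $\del X_i$ on which $\vp$ varies by less than $\delta/2$ and on which $|x|^2 - |p|^2$ is small, (ii) then $C$ large enough that $\vp_0 + C\rho \le \vp - \delta$ on $\del X \setminus V$ (using $\rho \le -c < 0$ there by compactness) and large enough that $C \ge R$, and (iii) verify $w(p) = \vp(p) \ge \vp(p) - \delta$ and $w|_{\del X} \le \vp$ everywhere so $w$ is a genuine subsolution that is $\delta$-maximal at $p$. The only subtlety beyond this is ensuring $w \in \USC(\overline X)$, which is automatic since $w$ is continuous (indeed smooth), and $w \in F(X)$, which follows from $C(\rho - \epsilon|x|^2) \in F(\overline X)$ together with stability of type-$F$ under addition of affine functions~\ref{it:aff} and the observation that restricting to the open domain $X$ preserves membership. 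I expect this to follow the model of~\cite[Lemma 6.8]{HL} closely, with Proposition~\ref{StrictFPartialProp} doing the geometric heavy lifting that replaces the smooth-boundary construction there.
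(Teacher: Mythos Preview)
Your proposal has a genuine gap. You write that on $\del X \setminus V$ (with $V$ a small neighborhood of $p$ in $\del X_i$) one has ``$\rho \le -c < 0$ by compactness,'' and you plan to use the term $C\rho$ to drive $w$ below $\vp$ there. But this is false: the global defining function $\rho$ vanishes on \emph{all} of $\del X_i$, not just near $p$. So on $\del X_i \setminus V$ you have $\rho = 0$ and $w|_{\del X_i} = \vp_0 - \eps C(|x|^2 - |p|^2)$. At points of $\del X_i \setminus V$ with $|x| < |p|$, this expression blows up to $+\infty$ as $C \to \infty$, so enlarging $C$ makes the situation worse, not better. No ordering of the quantifiers ``first $V$, then $C$'' rescues this.

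The fix is a one-line trick you almost stumbled onto: center the quadratic at $x_0$. Since $|x-x_0|^2$ differs from $|x|^2$ by an affine function, property~\ref{it:aff} gives $C(\rho - \eps|x-x_0|^2) \in F(X)$ for $C \ge R$ as well. Now take
\[
w(x) = C\big(\rho(x) - \eps|x-x_0|^2\big) + \vp(x_0) - \delta.
\]
On all of $\del X$ we have $\rho \le 0$, so $w(x) \le -C\eps|x-x_0|^2 + \vp(x_0) - \delta$; by uniform continuity of $\vp$ on the compact set $\del X$, one can choose $C$ large (depending only on $\delta$ and the modulus of continuity of $\vp$) so that $-C\eps|x-x_0|^2 + \vp(x_0) \le \vp(x) + \delta$ for every $x \in \del X$. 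Thus $w|_{\del X} \le \vp$, while $w(x_0) = \vp(x_0) - \delta$ since $\rho(x_0) = 0$. This is exactly the paper's argument; the affine auxiliary $\vp_0$ you introduce is unnecessary.
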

\begin{proof}
Choose a boundary defining function $\rho$ for $\del X_i$ and constants $\eps,R,$ as in Proposition~\ref{StrictFPartialProp}, so
\[
C(\rho-\eps|x|^2)\in
F(X)
\]
when $C\ge R$. Adding an affine function, also
$C(\rho-\eps|x-x_0|^2)\in
F(X)
$ by property~\ref{it:aff}.
Thus, using \eqref{rhonegativeEq},
given $\delta>0$, there exists $C\ge R$ sufficiently large such that
for any $x\in\del X$,
\beq\label{vpCInEq}
-\vp(x)+C(\rho(x)-\eps|x-x_0|^2) \leq -\vp(x) -C\eps|x-x_0|^2 \le -\vp(x_0)+\delta.
\eeq
Take
\[
w = C(\rho(x)-\eps|x-x_0|^2) + \vp(x_0) - \delta.
\]
Then inequality \eqref{vpCInEq} implies that $w|_{\partial X} \leq \vp$, and the vanishing of $\rho$ on $\del X_i$ implies that $w$ is $\delta$-maximal at $x_0.$
\end{proof}

\begin{proof}[Proof of Theorem~\ref{tm:subs}]
The set
$$
\calE_\vp:=\{v \h{ a subsolution to \FDP for $( X,\vp)$}\}
$$
is non-empty: For sufficiently large $C_1>0$ the function $C_1|x|^2$ belongs to $F(X)$ by property~\ref{it:quad}. Thus $C_1|x|^2-C_2\in\calE_\vp$ for
a sufficiently large constant $C_2$ depending only on $C_1$ and $||\vp||_{C^0}$.
Moreover, by property~\ref{it:ubd} there exists a uniform constant $C>0$ depending only on the Dirichlet data
and $X$
such that $u\le C$ for any $u\in\calE_\vp$.
For each $x\in\overline{X}$, set
$$
u_\vp(x):=\sup\{v(x)\,:\, v\in\calE_\vp\}.
$$
By property~\ref{it:ue}, we have $\usc u_\vp\in F( X)$.

{\it Step 1.}
We claim that $\usc u_\vp\in\calE_\vp$.
This implies that $\usc u_\vp \leq u_\vp$ by definition of $u_\vp,$ and thus $\usc u_\vp = u_\vp$ by definition of upper-semi-continuous regularization. Consequently, $u_\vp\in\calE_\vp$.

To prove $\usc u_\vp\in\calE_\vp$, let $x_0 \in \partial X$ and choose $\delta > 0.$ Depending on which case of Definition \ref{FvpConvexDef} we are in, either by assumption or by Lemma~\ref{lm:dss},
there
exists a $C^0(\overline{X})$ subsolution $w$ of the $\tF$-Dirichlet problem for $( X,-\vp)$
that is $\delta$-maximal at $x_0$. For any $v\in\calE_\vp$, we have $v+w\le 0$ on $\del X$.
Inclusion~\eqref{SAThm} implies that $v+w\in\SA(X)$. The maximum principle~\eqref{eq:mp} then gives $v\le -w$. So $u_\vp\le -w$, and since $w$ is continuous,
also $\usc u_\vp\le -w$. In particular, $\usc u_\vp(x_0)\le -w(x_0)\le \vp(x_0)+\delta$,
proving the claim, since $\delta>0$ is arbitrary.

{\it Step 2.} For every $x_0\in\del X$, $\liminf_{x\ra x_0}u_\vp(x)\ge \vp(x_0)$.

Choose $\delta > 0.$ Either by assumption or by Lemma~\ref{lm:dss}, there exists
a $C^0(\overline{X})$ subsolution $w$ of the \FDP problem for $(X,\vp)$
that is $\delta$-maximal at $x_0$. Since $u_\vp$ is the supremum of all subsolutions, we have $u_\vp \ge w.$ So, by the continuity of $w,$ we have
\[
\liminf_{x\ra x_0}u_\vp(x) \geq w(x_0) \geq  \vp(x_0)-\delta,
\]
and the claim follows since $\delta>0$ is arbitrary.

{\it Step 3.} $u_\vp\in C^0(\overline{X})$.

The proof of this property in \cite[Proposition 6.11]{HL} does not make any assumption on $\del X$
beyond it being a compact set, and so carries over to our setting.

{\it Step 4.} $u_\vp\in -\tF(X)$.

The proof of this is identical to that of \cite[Lemma 6.12]{HL}.

{\it Step 5.} Uniqueness.

This is a special case of Theorem~\ref{tm:un}.
%
\end{proof}

\section{Solution of the Dirichlet problem for the DSL}
\label{MainThmSec}

Finally, we are in a position to prove the existence and uniqueness of continuous solutions
to all branches of the Dirichlet problem for the DSL.

Let $\D \subset \R^n$ be a bounded domain with $\partial \D$ smooth.
Consider $\calD:= (0,1) \times \D,$ so $\overline \calD$ is a manifold with embedded corners.

\begin{theorem}\label{tm:cgeq}
Suppose $\partial D$ is strictly $\vF_{c-\pi/2},
\vtF_{c+\pi/2},$ convex.
Let $\vp \in C^{0}(\partial\calD)$ be consistent and
affine in $t$ when restricted to $[0,1]\times\del D$.
Consider the following hypotheses:
\begin{enumerate}
\item\label{it:one}
$c > -\frac{\pi}{2}$ and for each $i\in\{0,1\}$,
\beq\lb{BndryAssump1Eq}
\vp_i:= \vp|_{\{i\}\times D} \in C^{0}(D) \cap F_{c-\pi/2}(D).
\eeq
\item\label{it:two}
For each $i\in\{0,1\}$,
\beq\lb{BndryAssump2Eq}
\vp_i \in C^0(D)\cap F_{c-\pi/2}(D) \cap (-F_{-c-\pi/2}(D)).
\eeq
\end{enumerate}
If either~\ref{it:one} or~\ref{it:two} holds, there exists a unique solution in $C^0(\overline{\calD})$
to the $\calF_c$-Dirichlet problem for $(\calD,\vp)$.
\end{theorem}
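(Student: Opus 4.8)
The plan is to deduce Theorem~\ref{tm:cgeq} from the general existence result Theorem~\ref{tm:subs} applied to the subequation $F = \calF_c$ on the domain $X = \calD = (0,1)\times D$. By Theorem~\ref{calFcThm}, $\calF_c$ is a subequation with $\widetilde{\calF_c} = \calF_{-c}$ (using $|c| < (n+1)\pi/2$, which follows from the constraint on $c$ together with the hypotheses on $\vp$ and Lemma~\ref{lm:tThth}). Since $\overline{\calD}$ is a manifold with embedded corners, the only thing to check is that $\partial\calD$ is strictly $(\calF_c,\vp)$-convex and strictly $(\calF_{-c},-\vp)$-convex in the sense of Definition~\ref{FvpConvexDef}; uniqueness is then immediate from Theorem~\ref{tm:un}.

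The boundary decomposes naturally: $\partial\calD = A \cup B$ with $A = \{0,1\}\times D$ (the two ``time slices'') and $B = [0,1]\times\partial D$ (the ``lateral'' boundary). For the $B$-part I would verify condition~\ref{it:B} of Definition~\ref{FvpConvexDef}, i.e. that each lateral component is strictly $\vec{\calF_c}$ convex (and $\vec{\calF_{-c}}$ convex for the dual problem). Here one uses the hypothesis that $\partial D$ is strictly $\vF_{c-\pi/2}$ and $\vtF_{c+\pi/2}$ convex together with the relation between $\calF_c$ and $F_{c-\pi/2}$: on matrices of the block form relevant to a product domain, $\wtTh(A) = \pi/2 + \tilde\theta(A^+)$ when the first row/column vanishes, so $\vec{\calF_c}$ restricted to directions tangent to a lateral slice reduces to $\vec{F_{c-\pi/2}}$ on $\partial D$, and the extra $t$-direction contributes a $\calS$-type matrix whose presence only helps. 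Since the second fundamental form of $[0,1]\times\partial D$ at a lateral point is $\diag(0,\II^{\partial D}_x)$, strict $\vF_{c-\pi/2}$ convexity of $\partial D$ yields strict $\vec{\calF_c}$ convexity of $B$ (and symmetrically for the dual).

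For the $A$-part I would verify condition~\ref{it:A}: for each $p \in \{i\}\times D$ and each $\delta > 0$, produce a $C^0(\overline{\calD})$ subsolution of the $\calF_c$-Dirichlet problem that is $\delta$-maximal at $p$. The natural candidate is built from the boundary data $\vp_i$ itself, extended to be affine in $t$: set $w(t,x) = \vp_i(x) + \ell(t,x)$ where $\ell$ is affine in $t$, chosen so that $w$ matches $\vp$ along $\partial D$ in the affine-in-$t$ prescription and $w \le \vp$ on $\partial\calD$; adding an affine function preserves membership in $F(\calD)$ by property~\ref{it:aff}. One checks $w \in \calF_c(\calD)$: since $\vp_i \in F_{c-\pi/2}(D)$ by hypothesis~\ref{it:one} (or \ref{it:two}), its Hessian in the $x$-variables lies in $F_{c-\pi/2}$ almost everywhere, and $\nabla^2 w = \diag(0,\nabla^2_x\vp_i)$ pointwise (the $t$-direction is affine), so $\wtTh(\nabla^2 w) = \pi/2 + \tilde\theta(\nabla^2_x\vp_i) \ge \pi/2 + (c - \pi/2) = c$; combined with Theorem~\ref{tm:ae} (quasi-convexity coming from the lower eigenvalue bound implicit in $F_{c-\pi/2}$) this gives $w \in \calF_c(\calD)$. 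Then $w$ is maximal — hence $\delta$-maximal — at every point of $\{i\}\times D$. For the dual problem, hypothesis~\ref{it:two} (or, in case~\ref{it:one}, Remark~\ref{AutomaticConvexityRemark} applied to $\calF_c \subset \P$-type arguments together with the observation that $c > -\pi/2$ forces the needed one-sided bound) supplies the corresponding supersolutions.

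The main obstacle, I expect, is not the convexity bookkeeping but the interplay between the two hypotheses~\ref{it:one} and~\ref{it:two}: in case~\ref{it:one} one only assumes the \emph{lower} barrier $\vp_i \in F_{c-\pi/2}(D)$, so strict $(\calF_{-c},-\vp)$-convexity of the $A$-part must be extracted for free from the structure of $\calF_c$ (this is where $c > -\pi/2$ enters, via an automatic-convexity argument in the spirit of Remark~\ref{AutomaticConvexityRemark} — the relevant inclusion is that $-\calF_{-c}$ contains enough because one eigenvalue direction, the $t$-direction, is unconstrained). Making this asymmetry precise, and checking that the affine-in-$t$ extension of $\vp_i$ is genuinely a subsolution (not merely in $C^2$, but in the type-$F$ sense via Theorem~\ref{tm:ae}), are the steps that need care. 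Once both strict $(F,\vp)$- and $(\tF,-\vp)$-convexity are established, Theorem~\ref{tm:subs} delivers the unique $C^0(\overline{\calD})$ solution directly.
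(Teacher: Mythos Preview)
Your overall architecture---reduce to Theorem~\ref{tm:subs} by verifying strict $(\calF_c,\vp)$- and $(\calF_{-c},-\vp)$-convexity of $\partial\calD$---is exactly the paper's. But your allocation of the boundary components to the two cases of Definition~\ref{FvpConvexDef} differs from the paper's, and in one place it fails.

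\textbf{The lateral boundary.} You propose to put $[0,1]\times\partial D$ in the $B$-category by arguing that $\II_{(t_0,x_0)} = \diag(0,\II^{\partial D}_{x_0})$ is the restriction of some $B\in\Int\vcalF_c$. This does not work in general. Since the $t$-direction is tangent to the lateral boundary and flat, any such $B$ must have $B_{00}=0$. But if $B_{00}=0$, then for the nearby matrix $B' = B - \epsilon\, e_0 e_0^T$ one has $(tB')_{00}<0$ for every $t>0$, and Theorem~\ref{uscargFormulaThm} gives $\wtTh(tB')=-\pi/2+\sum_j\tan^{-1}\mu_j(tB')<(n-1)\pi/2$. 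Hence for $c>(n-1)\pi/2$ (in particular for the outermost branch), no matrix with vanishing $(0,0)$-entry lies in $\Int\vcalF_c$, and $[0,1]\times\partial D$ is \emph{not} strictly $\vcalF_c$ convex. The paper instead places the lateral boundary in the $A$-category: Lemma~\ref{lm:tconv} builds an explicit $\delta$-maximal subsolution $w(t,x)=C(\rho(x)-\epsilon|x-x_0|^2)+\vp(t,x_0)-\delta$, using the affine-in-$t$ hypothesis on $\vp|_{[0,1]\times\partial D}$ to keep $w\le\vp$ on all of $\partial\calD$. This is where that hypothesis is essential.

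\textbf{The time slices under hypothesis~\ref{it:one}.} Your invocation of Remark~\ref{AutomaticConvexityRemark} is misplaced: that remark applies when $F\subset\P$, which $\calF_c$ does not satisfy except in the outermost branch. The paper's argument is direct and different: it places $\{i\}\times D$ in the $B$-category for the \emph{dual} problem by exhibiting the defining function $f(t,x)=t(t-1)/2$, whose Hessian is $\diag(1,0,\ldots,0)$, so $\wtTh(\nabla^2 f)=\pi/2$, and $c>-\pi/2$ is exactly the condition $\pi/2>-c$ needed for $f$ to be strictly $\vcalF_{-c}$.

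\textbf{A minor point.} Your subsolution $w$ on the time slices cannot simultaneously match $\vp$ on the lateral boundary and have Hessian $\diag(0,\nabla_x^2\vp_i)$ unless the $t$-coefficient is independent of $x$. The paper uses $v_i(t,x)=\vp_i(x)-C(t-i)$ with $C$ large (Lemma~\ref{lm:subs}); this does not attain $\vp$ on the lateral boundary, only lies below it, which is all that is required. The passage from $C^2$ to general $\vp_i$ goes through sup-convolution (giving quasi-convexity by construction) rather than an eigenvalue bound implicit in $F_{c-\pi/2}$.
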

This section is dedicated to the proof of Theorem \ref{tm:cgeq}.

\begin{remark}\label{rm:convexity}
For example, the boundary convexity assumptions hold for any $D$ with strongly convex smooth boundary.  For any $c,$ a solution of the $\calF_c$-Dirichlet problem is Lipschitz in $t$ by Lemma~\ref{tm:Lip}. If $c \in [n\pi/2,(n+1)\pi/2)$ (resp. $c\in (-(n+1)\pi/2,-n\pi/2]$), then a solution of the $\calF_c$-Dirichlet problem is convex (resp. concave) in $x$ by Lemmas~\ref{lm:rest} and~\ref{lm:contP} below. It follows that such a solution is Lipschitz in $x$ and $t.$
\end{remark}

First, we construct a subsolution to the DSL that is maximal
along certain components of the boundary. For $t_0 < t_1 \in \R,$ write $\calD_{t_0,t_1} = (t_0,t_1)\times D.$ Given $\vp_i \in C^0(D),$ define $v_i\in C^0(\overline \calD_{t_0,t_1}), \ i = 0,1,$ by
\beq\lb{viEq}
v_0(t,x) = \vp_0(x) - C(t-t_0), \qquad v_1(t,x) = \vp_1(x) - C(t_1-t).
\eeq

\begin{lemma}\label{lm:subs}
Suppose $\vp_i \in C^0(D) \cap F_{c-\pi/2}(D).$ For each $i\in\{0,1\}$,
the function $v_i$ defined in~\eqref{viEq} is of type $\calF_c.$
\end{lemma}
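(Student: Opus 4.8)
The goal is to show that $v_0$ (and symmetrically $v_1$) is of type $\calF_c$ on $\calD_{t_0,t_1}$, i.e. $v_0 \in \calF_c(\calD_{t_0,t_1})$ in the sense of Section~\ref{DirichletSetsSubS}. Since $v_0(t,x) = \vp_0(x) - C(t-t_0)$ is affine in $t$ plus a function of $x$ alone, it is natural to first reduce to a pointwise Hessian computation: wherever $\vp_0$ is twice differentiable, $v_0$ is twice differentiable and
\[
\nabla^2 v_0(t,x) = \diag\big(0,\nabla^2\vp_0(x)\big) \in \calS.
\]
By the definition of $\wtTh$ on $\calS$, we get $\wtTh(\nabla^2 v_0(t,x)) = \pi/2 + \tr\arg(I + \i\nabla^2\vp_0(x)) = \pi/2 + \tilde\theta(\nabla^2\vp_0(x))$. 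The hypothesis $\vp_0 \in F_{c-\pi/2}(D)$ will, at points of twice-differentiability, give $\tr\tan^{-1}(\nabla^2\vp_0(x)) \geq c - \pi/2$ by property~\ref{it:hess} applied to the subequation $F_{c-\pi/2}$, hence $\wtTh(\nabla^2 v_0(t,x)) \geq c$, i.e. $\nabla^2 v_0(t,x) \in \calF_c$ pointwise a.e.

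The second step is to promote this almost-everywhere Hessian condition to the statement $v_0 \in \calF_c(\calD_{t_0,t_1})$. The tool is Theorem~\ref{tm:ae}: if $v_0$ is locally quasi-convex on $\calD_{t_0,t_1}$ and $\nabla^2 v_0(t,x) \in \calF_c$ for almost every $(t,x)$, then $v_0 \in \calF_c(\calD_{t_0,t_1})$. Local quasi-convexity of $v_0$ follows from local quasi-convexity of $\vp_0$: since $\vp_0 \in F_{c-\pi/2}(D)$ and $|c - \pi/2| < (n+1)\pi/2$ so $F_{c-\pi/2} \subset \Sym(\RR^n)$ is a genuine subequation, property~\ref{it:ubd} gives $\vp_0 + C'|x|^2 \in \SA(D)$ for some $C'$, but in fact the standard fact (from \cite[Theorem 4.5]{HL} and property~\ref{it:hess}) is that functions of type $F$ are automatically locally quasi-convex; adding the affine-in-$t$ term $-C(t-t_0)$ and the constant preserves quasi-convexity. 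Almost-everywhere twice differentiability of $v_0$ is then Alexandrov's theorem applied to the quasi-convex function $v_0$, and at such points Step~1 applies. This gives the full Fubini-type statement that $\nabla^2 v_0 \in \calS \cap \calF_c$ a.e.

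The main obstacle I anticipate is the quasi-convexity bookkeeping: one must be slightly careful that "$\vp_0 \in F_{c-\pi/2}(D)$" — which is an a priori merely upper-semicontinuous notion — actually forces $\vp_0$ to be locally quasi-convex, so that Alexandrov applies and the pointwise Hessian computation is meaningful at a.e.\ point. This is exactly the content built into \HLno's development (their functions of type $F$ are locally quasi-convex because $F \supset$ a translate of $\P$ in the relevant sense), and I would cite \cite[Section 4]{HL} for it. A secondary subtlety is that the constant $C$ in~\eqref{viEq} is genuinely irrelevant for this lemma — it only affects the affine-in-$t$ part, which lies in the kernel of the Hessian — so the statement holds for every $C$, matching how the lemma will be used later (the size of $C$ matters only for arranging the correct boundary values $v_i|_{\partial\calD}$, handled elsewhere). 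Once Step~1 and Step~2 are in place, the conclusion $v_i \in \calF_c(\calD_{t_0,t_1})$, i.e.\ $v_i$ is of type $\calF_c$, is immediate.
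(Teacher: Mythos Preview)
There is a genuine gap in your argument: the claim that functions of type $F$ are ``automatically locally quasi-convex'' is false in general. Property~\ref{it:ubd} only gives $\vp_0 + C'|x|^2 \in \SA(D)$, and subaffine is strictly weaker than convex in dimension $n \geq 2$; \cite[Theorem 4.5]{HL} identifies $\tilde\P(X) = \SA(X)$, not $\P(X) = \SA(X)$. For a concrete counterexample in $\RR^2$, take $\vp_0(x_1,x_2) = -|x_1|$. This function is of type $\tilde\P$ (any $C^2$ test function touching from above at a point of the ridge $\{x_1=0\}$ must have $\phi_{x_2 x_2} \geq 0$, so $\nabla^2\phi \in \tilde\P$), and $\tilde\P \subset F_{c-\pi/2}$ whenever $c - \pi/2 \leq -(n-1)\pi/2$. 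But $-|x_1| + \frac{\lambda}{2}|x|^2$ has a downward kink in $x_1$ at $x_1=0$ for every $\lambda$, so $\vp_0$ is not locally quasi-convex. Consequently Alexandrov's theorem does not apply, you cannot assert $\nabla^2\vp_0$ exists a.e., and Theorem~\ref{tm:ae} is not available for $v_0$ directly.

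The paper's proof deals with exactly this obstacle by passing through the sup-convolution regularizations $\vp_i^\eps(x) = \sup_{y}\big[\vp_i(y) - \eps^{-1}|x-y|^2\big]$. By \cite[Theorem 8.2]{HL} these are $\frac{1}{\eps}$-quasi-convex, remain in $F_{c-\pi/2}$ on a slightly smaller domain $D_{\delta(\eps)}$, and decrease to $\vp_i$ as $\eps \to 0$. One then carries out your pointwise Hessian computation for the quasi-convex $v_i^\eps$ (where Alexandrov and Theorem~\ref{tm:ae} legitimately apply), obtaining $v_i^\eps \in \calF_c((t_0,t_1)\times D_\delta)$, and finally passes to the decreasing limit via property~\ref{it:dl} to conclude $v_i \in \calF_c(\calD_{t_0,t_1})$. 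Your Step~1 is the right core computation; what is missing is this regularization layer that makes Step~2 valid.
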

\begin{proof}
First, suppose
$\vp_i \in C^2(D).$
Then, for each $(t,x) \in \calD$,
$$
\wtTh(\nabla^2v_i(t,x))=\pi/2+\tr\tan^{-1}(\nabla_x^2\vp_i(x))
\ge c.
$$

\def\dist{\hbox{dist}}

Next, we treat the general case.
Let
$$
\vp_i^\eps(x):=
\sup_{y\in D}
\big[
\vp_i(y)-\eps^{-1}|x-y|^2
\big],
$$
and define $v_i^\eps$ by replacing
$\vp_i$ by $\vp_{i}^\eps$
in
the definition \eqref{viEq} of $v_i$.
Let $D_\delta:=\{x\in D\,:\,
\dist(x,\del D)>\delta\}$.
Then, we have the following \cite[Theorem 8.2]{HL}.

\begin{enumerate}
\item
$\vp_i^\eps\in F_{c - \pi/2}(D_\delta)$ for $\delta(\eps)=C\sqrt{\eps}$
for $C$ depending only on $||\vp_i||_{C^0(D)}$.
\item\label{it:qc}
$\vp_i^\eps$ is $\frac1\eps$-quasiconvex.
\item\label{it:d}
$\vp_i^\eps$ decreases to $\vp_i$ as $\eps\ra0$.
\end{enumerate}
Properties \ref{it:qc} and \ref{it:d} carry over to $v_i^\eps$.
Quasiconvexity implies that the Hessian of $\vp_i^\eps$ exists a.e., so
$\nabla^2\vp_i^\eps(x)\in F_{c - \pi/2}$ for a.e. $x\in D_\delta$ by property~\ref{it:hess}.
Thus, the computation of the previous paragraph shows that
$$
\nabla^2v_i^\eps(t,x)\in
\calF_c \q\h{ for a.e. $(t,x)\in (t_0,t_1)\times D_\delta$}.
$$
Thus, $v_i^\eps\in\calF_c((t_0,t_1)\times D_\delta)$
for every $\eps>0$ by Theorem~\ref{tm:ae}. This implies
$v_i\in \calF_c(\calD_{t_0,t_1})$.
Indeed,
by definition (recall \S\ref{DirichletSetsSubS}),
we must check that $v_i+f\in\SA(\calD_{t_0,t_1})$ for any
$f\in C^2(\calD_{t_0,t_1})\cap \tcalF_c(\calD_{t_0,t_1})$.
But, $v_i^\eps+f\in\SA((t_0,t_1)\times D_\delta)$, and
since $v_i^\eps$ decreases to $v$
and $\lim_{\eps\ra0}\delta(\eps)=0$,
we have $v_i+f\in\SA(\calD_{t_0,t_1})$ by property~\ref{it:dl} as desired.
\end{proof}

\blemma\label{lm:tconv}
Let $\vp \in C^{0}(\partial\calD)$ be consistent and
affine in $t$ when restricted to $[0,1]\times\del D$.
Let $\delta>0$ and let $(t_0,x_0)\in [0,1]\times \del D$.
If $\del D$ is $\vF_{c-\pi/2}$ strictly convex, then there exists a subsolution to the
 $\calF_c$ Dirichlet problem for $(X,\vp)$ that is $\delta$-maximal at $(t_0,x_0)$.
\elemma
\bpf
Choose a boundary defining function $\rho$ for $\del D$ and constants $\eps,R,$ as in Proposition~\ref{StrictFPartialProp}, so
\begin{equation}\label{eq:r-}
C(\rho-\eps|x|^2)\in
F_{c-\pi/2}(D)
\end{equation}
when $C\ge R$. Adding an affine function, also
$C(\rho-\eps|x-x_0|^2)\in
F_{c-\pi/2}(D)
$ by property~\ref{it:aff}. Write $\vp_i = \vp|_{\{i\}\times D}$ for $i = 0,1.$
Using~\eqref{rhonegativeEq},
for any $\delta>0$, there exists $C\ge R$ sufficiently large that
for all $x\in D$ and $i = 0,1,$
\[
-\vp_i(x)
+C(\rho(x)-\eps|x-x_0|^2) \le -\vp_i(x_0)+\delta.
\]
Since by assumption $\vp|_{\del\calD}(t,x)=
(1-t)\vp_0(x)+t\vp_1(x)$, it follows that for each
$(t,x)\in\del\calD$,
\[
-\vp(t,x)
+C(\rho(x)-\eps|x-x_0|^2) \le -\vp(t,x_0)+\delta.
\]
So, $w(t,x):=C(\rho(x)-\eps|x-x_0|^2)+\vp(t,x_0)-\delta$
lies below $\vp(t,x)$
on $\del\calD$, and satisfies $w(t,x_0)= \vp_0(t,x_0)-\delta$
for each $t\in[0,1]$.
Since
\[
\nabla^2 w = \diag(0,\nabla_x^2 C(\rho-\epsilon|x-x_0|^2)),
\]
by condition~\eqref{eq:r-} we have
\[
\wtTh(\nabla^2w)=\pi/2+\tr\tan^{-1}(\nabla_x^2C(\rho-\epsilon|x-x_0|^2))
\ge c.
\]
Thus, $w\in \calF_c(\calD)$.
%
\epf

\blemma
\lb{ConvAssumpLemma}
Let $\calD$ and $\vp$ be as in Theorem~\ref{tm:cgeq}. Then $\del\calD$ is $(\calF_c,\vp)$ strictly convex and $(\tcalF_c,-\vp)$ strictly convex.
\elemma

\bpf
We consider boundary components and the conditions they satisfy one by one. We take $\vp_i = \vp|_{\{i\}\times D}$ for $i = 0,1.$
\begin{enumerate}[label=(\alph*)]
\item\label{it:BB}
Since $\partial D$ is $\vF_{c-\pi/2}$ strictly convex and $\vp$ is affine on $[0,1]\times \partial D,$ Lemma~\ref{lm:tconv} implies that for each
$\delta>0$ and each
$(t_0,x_0)\in[0,1] \times \del D$
there exists a
subsolution to the $\calF_c$ Dirichlet problem for $(\calD,\vp)$ $\delta$-maximal
at $(t_0,x_0)$.
Similarly, since
$\partial D$ is $\vtF_{c+\pi/2} = \vF_{-c-\pi/2}$ strictly convex,
there exists a
subsolution to the $\calF_{-c}$ Dirichlet problem for $(\calD,-\vp)$
that is $\delta$-maximal
at $(t_0,x_0)$ for each
$\delta>0$.

%
\item\label{it:AA+}
Take $t_0=0, t_1 = 1,$ in~\eqref{viEq}, and choose the constant $C$ large enough that $v_i |_{\partial \calD} \leq \vp.$ This is possible because $\vp$ is affine on $[0,1]\times\del D$. Then Lemma~\ref{lm:subs} shows that $v_i$ is a subsolution to the $\calF_c$ Dirichlet problem for $(\calD,\vp)$ maximal along $\{i\} \times D$ for $i = 0,1.$
\item\label{it:AA-2}
Under hypothesis~\ref{it:two} of Theorem~\ref{tm:cgeq}, we know $-\vp_i \in C^0(D) \cap F_{-c-\pi/2}.$ So, we apply Lemma~\ref{lm:subs} to $-\vp_i$. Again using the assumption that $\vp|_{[0,1]\times \del D}$ is affine and choosing the constant $C$ of~\eqref{viEq} large enough, we see that $v_i$ is a subsolution to the $\calF_{-c} = \tcalF_{c}$ Dirichlet problem for $(\calD,-\vp)$ that is maximal along $\{i\}\times D$ for $i = 0,1.$
\item\label{it:AA-1}
Under hypothesis~\ref{it:one} of Theorem~\ref{tm:cgeq}, the boundary components $\{i\}\times D$ are $\vtcalF_c$ strictly convex. Indeed, this amounts to finding a strictly $\vcalF_{-c}$ defining function for each of these components
of the boundary. Consider the function $f(t,x) = t(t-1)/2$, which is a DF for both components simultaneously.
Then $\wtTh(\nabla^2f(t,x)) = \pi/2,$ and since $-c < \pi/{2}$, we conclude that $f$ is strictly $\vcalF_{-c}$ as desired.
\end{enumerate}
Under hypothesis~\ref{it:one} of Theorem~\ref{tm:cgeq}, the Lemma follows from~\ref{it:BB},~\ref{it:AA+} and~\ref{it:AA-1}. Under hypothesis~\ref{it:two} of Theorem~\ref{tm:cgeq} it follows from~\ref{it:BB},~\ref{it:AA+} and~\ref{it:AA-2}.
\epf

\begin{proof}[Proof of Theorem~\ref{tm:cgeq}]
Combine Lemma \ref{ConvAssumpLemma} and Theorem~\ref{tm:subs}.
\end{proof}

\section{Regularity properties of solutions}
\label{RegSec}

In this section we prove that solutions to the DSL have some additional
regularity properties beyond continuity up to the boundary.
Corollary \ref{calFcConvCor} shows that the solution is
$F_{c-\pi/2}\cap (-F_{-c-\pi/2})$
convex
on each time slice. In Lemma \ref{tm:Lip} we prove the solution is Lipschitz
continuous in time together with an a priori estimate.

\subsection{\texorpdfstring{$F_{c-\pi/2}$}{F}-convexity along time slices}
\label{ConvFibSubSec}

Recall that $\calD = [0,1]\times D.$ Write $u_t = u|_{\{t\} \times D}.$
\begin{lemma}\label{lm:rest}
Suppose $u \in \calF_c(\calD)$. Then
$u_{t_0} \in F_{c-\pi/2}(D)$ for each $t_0 \in (0,1)$.
\end{lemma}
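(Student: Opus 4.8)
The plan is to reduce the assertion to a pointwise (matrix-level) inclusion, and then to handle the non-smoothness of $u$ by the sup-convolution scheme already used in the proof of Lemma~\ref{lm:subs}. We may assume $F_{c-\pi/2}$ is a proper subequation of $\Symn$; otherwise $F_{c-\pi/2}=\Symn$ and the statement is vacuous. The pointwise input is Lemma~\ref{lm:tThth}, which gives $\tilde\theta(A^+)\ge\wtTh(A)-\pi/2$ for every $A\in\Symnplusone$; since $\tilde\theta(A^+)=\tr\tan^{-1}(A^+)$ and $A^+=\nabla_x^2 u(t,x)$ when $A=\nabla^2 u(t,x)$, this says that any twice-differentiable $u$ with $\nabla^2 u(t,x)\in\calF_c$ automatically satisfies $\nabla_x^2 u(t,x)\in F_{c-\pi/2}$. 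The obstacle is that $u\in\calF_c(\calD)$ is merely a subsolution and that a single time-slice $\{t_0\}\times D$ is Lebesgue-null in $\calD$, so almost-everywhere information on $\calD$ does not descend to it directly.

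So I would introduce the sup-convolution $u^\eps(t,x):=\sup\{u(s,y)-\eps^{-1}(|t-s|^2+|x-y|^2)\}$. Exactly as in Lemma~\ref{lm:subs} (using \cite[Theorem~8.2]{HL}), $u^\eps$ is $\tfrac1\eps$-quasiconvex, belongs to $\calF_c(\calD_{\delta(\eps)})$ for $\delta(\eps)=C\sqrt\eps$, and decreases to $u$ as $\eps\to0$. By Alexandrov's theorem $u^\eps$ is twice differentiable almost everywhere on $\calD_{\delta(\eps)}$; at such a point property~\ref{it:hess} gives $\nabla^2 u^\eps\in\calF_c$, hence $\nabla_x^2 u^\eps\in F_{c-\pi/2}$ by the previous paragraph. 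Applying Fubini's theorem to the full-measure set on which $u^\eps$ is twice differentiable, for almost every $t$ the section $(u^\eps)_t$ is $\tfrac1\eps$-quasiconvex on $D_{\delta(\eps)}$ and satisfies $\nabla^2(u^\eps)_t(x)=\nabla_x^2 u^\eps(t,x)\in F_{c-\pi/2}$ for almost every $x$; Theorem~\ref{tm:ae} then yields $(u^\eps)_t\in F_{c-\pi/2}(D_{\delta(\eps)})$ for almost every $t$.

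It remains to upgrade this to the particular value $t_0$ and then to remove the regularization. Since $u^\eps$ is quasiconvex it is continuous, so $(u^\eps)_t\to(u^\eps)_{t_0}$ in $C^0$ on compact subsets of $D_{\delta(\eps)}$ as $t\to t_0$; picking a sequence $t_j\to t_0$ inside the co-null set of times for which the previous step applies, property~\ref{it:uniflimit} gives $(u^\eps)_{t_0}\in F_{c-\pi/2}(D_{\delta(\eps)})$. Now fix $\delta>0$: for $\eps$ small enough one has $D_\delta\subset D_{\delta(\eps)}$ and $t_0\in(\delta(\eps),1-\delta(\eps))$, so $(u^\eps)_{t_0}|_{D_\delta}\in F_{c-\pi/2}(D_\delta)$, and these functions decrease to $u_{t_0}|_{D_\delta}$ as $\eps\to0$; property~\ref{it:dl} gives $u_{t_0}\in F_{c-\pi/2}(D_\delta)$ for every $\delta>0$, and hence $u_{t_0}\in F_{c-\pi/2}(D)$ because being of type $F_{c-\pi/2}$ is a local property (each compact subset of $D$ lies in some $D_\delta$).

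The hard part is precisely this passage from almost-every section to the section $\{t_0\}\times D$: property~\ref{it:hess} on $\calD$ has to be combined with continuity in $t$ (through property~\ref{it:uniflimit}), and cannot be invoked naively. An alternative route, bypassing sup-convolutions, is a direct viscosity argument based on the test-function characterization of functions of type $F$ from \cite{HL}: given $\phi\in C^2$ near $x_0\in D$ such that $u_{t_0}-\phi$ has a strict local maximum at $x_0$ (strictness arranged by adding $|x-x_0|^4$, which leaves $\nabla^2\phi(x_0)$ unchanged), test $u$ against $\psi_\lambda(t,x):=\phi(x)+\lambda(t-t_0)^2$ on a small closed box around $(t_0,x_0)$ and track the maximizer $(t_\lambda,x_\lambda)$ of $u-\psi_\lambda$ as $\lambda\to\infty$; the penalty forces $t_\lambda\to t_0$ and strictness forces $x_\lambda\to x_0$ and interiority of the maximizer, so $u\in\calF_c(\calD)$ forces $\nabla^2\psi_\lambda(t_\lambda,x_\lambda)\in\calF_c$, and since $(\nabla^2\psi_\lambda)^+=\nabla^2\phi(x_\lambda)$ the inclusion of the first paragraph together with the closedness of $F_{c-\pi/2}$ gives $\nabla^2\phi(x_0)\in F_{c-\pi/2}$.
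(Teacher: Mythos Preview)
Your proof is correct, and both routes you sketch work. The paper's argument is different and more direct: to show $u_{t_0}\in F_{c-\pi/2}(D)$ it tests subaffinity of $v+u_{t_0}$ for $v\in C^2(D)\cap \tF_{c-\pi/2}(D)=C^2(D)\cap F_{\pi/2-c}(D)$ by lifting $v$ to $v_C(t,x):=v(x)-C(t-t_0)^2$, computing $\wtTh(\nabla^2 v_C)=-\pi/2+\tr\tan^{-1}(\nabla_x^2 v)\ge -c$ so that $v_C\in\calF_{-c}(\calD)=\tcalF_c(\calD)$, and then applying $u+v_C\in\SA(\calD)$ on a thin slab $[t_0-\delta,t_0+\delta]\times K$ with $C$ large (to suppress the top and bottom faces) and $\delta$ small (so the lateral boundary is close to $\{t_0\}\times\del K$). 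This is essentially the dual of your second route: both insert a quadratic penalty in $t$ to localize to the slice, but the paper stays inside the subaffine definition (lifting the dual test function $v\in F_{\pi/2-c}$ up to $v_C\in\calF_{-c}$), whereas you use the viscosity test-function characterization (restricting $\nabla^2\psi_\lambda\in\calF_c$ down to $\nabla^2\phi\in F_{c-\pi/2}$ via Lemma~\ref{lm:tThth}). Your first route via sup-convolution, Fubini, and Theorem~\ref{tm:ae} is genuinely different and heavier on machinery; the paper's approach avoids regularization and almost-everywhere reasoning entirely, at the cost of the slightly ad hoc slab argument.
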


\begin{proof}
By definition (recall \S\ref{DirichletSetsSubS}) it suffices to show that
$v + u_{t_0}$ is subaffine for any
$v\in C^2(D) \cap \tF_{c-\pi/2}(D) = C^2(D) \cap F_{\pi/2-c}(D)$
(recall \eqref{FcDualEq}).

Fix $t_0 \in (0,1)$ and a compact set $K \subset D$.
Let $K_\delta:= [t_0-\delta,t_0+\delta] \times K\subset\calD$.
Let $a$ be an affine function on $D$ and
let $v \in C^2(D)  \cap F_{\pi/2-c}(D)$.
Let $\pi_2:(0,1)\times D\ra D$ denote the natural projection.
Given $\epsilon > 0,$ choose $\delta > 0$ small enough so that
\begin{equation}\label{eq:ed}
\max_{[t_0-\delta,t_0+\delta]\times\partial K}
(v\circ\pi_2 + u+ a\circ\pi_2) \leq \max_{\partial K} (v + u_{t_0} + a) + \epsilon.
\end{equation}
Let $C>0$ and define $v_C: \calD \to \R$ by
\[
v_C = v\circ\pi_2-C{(t-t_0)^2}.
\]
Then
\[
\wtTh(\nabla^2v_C(t,x))=-\pi/2+\tr\tan^{-1}(\nabla_x^2 v(x))
\ge -c,
\]
so $v_C \in \calF_{-c}(\calD).$ By Lemma~\ref{lm:Fcd}, it follows that $v_C \in \tcalF_c(\calD).$ Choose $C$ large enough so that
\begin{equation}\label{eq:Kd}
\max_{\partial K_\delta} (v_C + u+a\circ\pi_2) \leq \max_{\partial K} (v + u_{t_0}+a)+\epsilon.
\end{equation}
This is indeed possible: Inequality~\eqref{eq:ed} together with the fact
that $C>0$ takes care of the subset $[t_0-\delta,t_0+\delta]\times\partial K\subset \del K_\delta$, while choosing
$C$ large enough takes care of $\{t_0\pm \delta\}\times K \subset \del K_\delta$.
Since $u \in \calF_c(\calD)$ and $v_C\in \tcalF_{c}(\calD)$, we have $v_C + u \in \SA(\calD).$ Therefore,
using the fact that
\[
(v_C + u + a \circ \pi_2)|_{t = t_0} = v + u_{t_0} + a,
\]
as well as inequalities~\eqref{eq:mp} and~\eqref{eq:Kd}, we obtain
\begin{align*}
\max_K(v + u_{t_0} + a) &\leq \max_{K_\delta} (v_C + u + a\circ\pi_2) \\
&\leq \max_{\partial K_\delta} (v_C + u + a\circ\pi_2) \\
&\leq \max_{\partial K} (v + u_{t_0}+a)+\epsilon.
\end{align*}
Since $\epsilon$ was arbitrary, it follows that $v + u_{t_0}$ is subaffine as desired.
\end{proof}

The preceeding lemma implies that our viscosity solutions of the DSL actually preserve the relevant notion of convexity on slices $\{t_0\} \times D \subset \calD.$

\bcor\label{calFcConvCor}
Suppose $u$ is
the solution
of the $\calF_c$-Dirichlet problem for $(\calD,\vp)$
provided by Theorem~\ref{tm:cgeq}.
Then
$u_{t_0} \in
F_{c-\pi/2}(D)
\cap
(-F_{-c-\pi/2}(D))$ for each $t_0 \in (0,1)$.
\ecor

\begin{rem}
Neither Lemma~\ref{lm:rest} nor Corollary \ref{calFcConvCor} extend to
$t_0 \in \{0,1\}$.
For example, in the setting of Theorem~\ref{tm:cgeq}(i),
choose $\vp_i\in C^0(D)\cap F_{c-\pi/2}$ such that
$\vp_i\not\in -F_{-c-\pi/2}$.
Theorem~\ref{tm:cgeq}(i) then furnishes a function $u\in\calF_c(\calD)
\cap (-\calF_{-c}(\calD))$ satisfying $u|_{\{i\}\times D} =\vp_i$.
Thus, while $-u$ belongs to $\calF_{-c}(\calD)$,
the restriction $-u|_{\{i\}\times D}=-\vp_i$ does not belong to
$F_{-c-\pi/2}$.
This stems from the fact that $-u$ belongs to $\calF_{-c}$
only in the interior of $\calD$. Indeed, the
proof of \cite[Lemma 6.12]{HL} only applies to the interior of $\calD$.
\end{rem}

\subsection{A priori \texorpdfstring{$C^0$}{C0} estimate on time slices}
\begin{lemma}\label{lm:C0ts}
Suppose $u$ solves the $\calF_c$-Dirichlet problem for $(\calD,\vp)$ and
\[
\vp|_{[0,1] \times\partial D} \in C^{0,1}, \qquad \vp|_{\{i\}\times D} \in
F_{c-\pi/2}\cap \left(-F_{-c-\pi/2}\right), \quad i = 0,1.
\]
There exists a constant $C = C(\|\vp_1-\vp_0\|_{C^0(D)},\|\vp\|_{C^{0,1}([0,1]\times\partial D)})$ such that
\[
|\vp_1(x) - u(t,x)| \leq C(1-t), \qquad |u(t,x) - \vp_0(x)| \leq Ct, \qquad (t,x) \in \calD.
\]
\end{lemma}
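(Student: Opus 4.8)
The plan is to bracket $u$ between four explicit barriers that are affine in $t$ and conclude via the subaffine maximum principle. Let $L$ denote the Lipschitz constant of $\vp|_{[0,1]\times\partial D}$ in the $t$ variable (so $L \le \|\vp\|_{C^{0,1}([0,1]\times\partial D)}$) and set $C := \max\{\|\vp_1-\vp_0\|_{C^0(D)},\, L\}$. First I would establish the lower bounds. Apply~\eqref{viEq} with $t_0 = 0$, $t_1 = 1$ to obtain $v_0(t,x) = \vp_0(x) - Ct$ and $v_1(t,x) = \vp_1(x) - C(1-t)$; since $\vp_i \in F_{c-\pi/2}(D)$, Lemma~\ref{lm:subs} gives $v_i \in \calF_c(\calD)$. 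The choice of $C$ forces $v_i \le \vp$ on all of $\partial\calD$: on the slices $\{0\}\times D$ and $\{1\}\times D$ this is exactly $C \ge \|\vp_1-\vp_0\|_{C^0(D)}$, while on the lateral face $[0,1]\times\partial D$ it follows from $\vp(t,x)\ge \vp(j,x) - L|t-j|$ for $j \in \{0,1\}$ together with $C \ge L$ (using consistency of $\vp$ to identify $\vp(j,x)$ with $\vp_j(x)$ at the corners). Since $u$ solves the $\calF_c$-Dirichlet problem, $-u \in \widetilde\calF_c(\calD)$, so $v_i - u \in \SA(\calD)$ by~\eqref{SAThm}; as $v_i - u$ is continuous on $\overline\calD$ and nonpositive on $\partial\calD$, the maximum principle~\eqref{eq:mp} gives $v_i \le u$ on $\overline\calD$. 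Taking $i = 1$ and $i = 0$ yields $\vp_1(x) - u(t,x) \le C(1-t)$ and $\vp_0(x) - u(t,x) \le Ct$.

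For the upper bounds I would repeat the argument with $u$, $\vp$ replaced by $-u$, $-\vp$. Since $\vp_i \in -F_{-c-\pi/2}(D)$, i.e. $-\vp_i \in F_{-c-\pi/2}(D)$, applying Lemma~\ref{lm:subs} with the pair $(c,\vp_i)$ replaced by $(-c,-\vp_i)$ shows that $w_0(t,x) = -\vp_0(x) - Ct$ and $w_1(t,x) = -\vp_1(x) - C(1-t)$ are of type $\calF_{-c}$, hence of type $\widetilde\calF_c$ by Lemma~\ref{lm:Fcd}. The same numerology as above, now with $\vp$ replaced by $-\vp$, shows $w_i \le -\vp$ on $\partial\calD$ for the same constant $C$. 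Because $u \in \calF_c(\calD)$, inclusion~\eqref{SAThm} gives $u + w_i \in \SA(\calD)$, and since $u + w_i \le 0$ on $\partial\calD$ the maximum principle yields $u + w_i \le 0$ on $\overline\calD$, that is $u(t,x) - \vp_1(x) \le C(1-t)$ and $u(t,x) - \vp_0(x) \le Ct$. Combining the two pairs of inequalities gives the stated estimate with $C$ depending only on $\|\vp_1-\vp_0\|_{C^0(D)}$ and $\|\vp\|_{C^{0,1}([0,1]\times\partial D)}$.

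The argument uses no analytic input beyond Lemmas~\ref{lm:subs} and~\ref{lm:Fcd}, the duality inclusion~\eqref{SAThm}, and the subaffine maximum principle~\eqref{eq:mp}. The only point requiring care is the bookkeeping that shows a single $C$ with the claimed dependence makes all four barriers sit below the boundary data simultaneously; the contribution of the lateral boundary $[0,1]\times\partial D$ is precisely where the Lipschitz hypothesis on $\vp$ enters, and the consistency of $\vp$ is what lets one evaluate $\vp_i$ on $\partial D$ and match it to the lateral values at the corners.
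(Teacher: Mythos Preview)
Your proof is correct and follows essentially the same approach as the paper: build the barriers $v_i$ from~\eqref{viEq} using Lemma~\ref{lm:subs}, check they lie below $\vp$ on $\partial\calD$ by the choice of $C$, apply~\eqref{SAThm} and the maximum principle~\eqref{eq:mp} to get the lower bounds, and then repeat with $(-c,-\vp)$ for the upper bounds. You are simply a bit more explicit about the constant $C$ and about invoking Lemma~\ref{lm:Fcd} for the identification $\calF_{-c}=\widetilde\calF_c$, but the argument is the same.
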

\begin{proof}
Take $t_0=0, t_1=1,$ in equation~\eqref{viEq}, and choose $C$ large enough that $v_i|_{\partial\calD} \leq \vp.$ The choice of $C$ depends only on $\|\vp_1-\vp_0\|_{C^0(D)},\|\vp\|_{C^{0,1}([0,1]\times\partial D)}$. Then Lemma~\ref{lm:subs} shows $v_i$ is a subsolution of the $\calF_c$-Dirichlet problem for $(\calD,\vp).$ By inclusion~\eqref{SAThm} and the maximum principle~\eqref{eq:mp}, we deduce that $u \geq v_i$ on $\cal D.$ From the definition of $v_i,$ it follows that
\[
u(t,x) - \vp_0(x) \geq -Ct,  \qquad u(t,x) - \vp_1(x) \geq -C(1-t), \qquad (t,x) \in \calD.
\]
This proves the desired lower bounds on $u(t,x).$ To obtain the analogous upper bounds, we apply Lemma~\ref{lm:subs} to $-\vp_i$ to obtain subsolutions $v_i$
(different from the $v_i$ of the previous paragraph) to the $(\calF_{-c},-\vp)$ Dirichlet problem. The bounds on $C$ are the same as before. Again using inclusion~\eqref{SAThm} and maximum principle~\eqref{eq:mp}, we obtain $-u \geq v_i$ on $\calD.$ So, from the definition of $v_i,$ we conclude that
\[
u(t,x) - \vp_0(x) \leq Ct, \qquad u(t,x) - \vp_1(x) \leq C(1-t), \qquad (t,x) \in \calD,
\]
as desired.
\end{proof}
\subsection{Partial Lipschitz estimate}
\label{LipschitzSubSec}

\begin{lemma}\label{tm:Lip}
Suppose $u$ solves the $\calF_c$-Dirichlet problem for $(\calD,\vp)$ and
\[
\vp|_{[0,1] \times\partial D} \in C^{0,1}, \qquad \vp|_{\{i\}\times D} \in
F_{c-\pi/2}\cap \left(-F_{-c-\pi/2}\right), \quad i = 0,1.
\]
Then for each $x\in D$,  $u(\,\cdot\,,x)\in C^{0,1}([0,1])$. Moreover,
\[
\sup_{x\in D}||u(\,\cdot\,,x)||_{C^{0,1}([0,1])}\le C
=
C(||\vp_0||_{C^{0}},||\vp_1||_{C^{0}},||\vp||_{C^{0,1}([0,1]\times\del D)},D).
\]
\end{lemma}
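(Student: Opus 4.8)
The plan is to exploit the invariance of the subequation $\calF_c$ under translation in the $t$-variable, combined with the a priori estimate on time slices from Lemma~\ref{lm:C0ts}, via a Harvey--Lawson comparison argument. First I would fix the relevant constants: by Lemma~\ref{lm:C0ts} there is $C_1 = C_1(\|\vp_1-\vp_0\|_{C^0(D)},\|\vp\|_{C^{0,1}([0,1]\times\partial D)})$ with $|u(t,x)-\vp_0(x)|\le C_1 t$ and $|u(t,x)-\vp_1(x)|\le C_1(1-t)$ on $\calD$, and I set $C := \max\{C_1,\ \mathrm{Lip}_t(\vp|_{[0,1]\times\partial D})\}$, which depends only on the stated data. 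In particular the sandwich between the sub/supersolutions of Lemma~\ref{lm:subs} (or Lemma~\ref{lm:C0ts} itself) gives $\sup_{\overline\calD}|u|\le\max\{\|\vp_0\|_{C^0},\|\vp_1\|_{C^0}\}+C_1$.

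Next, for $s\in(0,1)$ set $\calD_s := (0,1-s)\times D$ and define $w_s\in C^0(\overline{\calD_s})$ by $w_s(t,x):=u(t+s,x)$. Since membership in $\calF_c(\,\cdot\,)$ is defined by testing against $C^2$ functions of dual type and is manifestly preserved both under translation of the $t$-coordinate and under restriction to subdomains, $w_s\in\calF_c(\calD_s)$ and $-w_s\in\tcalF_c(\calD_s)$ (here $\tcalF_c=\calF_{-c}$ by Theorem~\ref{calFcThm}), while also $u\in\calF_c(\calD_s)$ and $-u\in\tcalF_c(\calD_s)$ by restriction. Now I would bound $|u-w_s|$ on $\partial\calD_s$ piece by piece: on $\{0\}\times D$ it equals $|\vp_0(x)-u(s,x)|\le C_1 s$ and on $\{1-s\}\times D$ it equals $|u(1-s,x)-\vp_1(x)|\le C_1 s$, both by Lemma~\ref{lm:C0ts}; on $[0,1-s]\times\partial D$ it equals $|\vp(t,x)-\vp(t+s,x)|\le C s$ by the Lipschitz bound on the side boundary data. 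Hence $|u-w_s|\le Cs$ on $\partial\calD_s$.

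Then I apply the subaffine machinery. Adding the constant $Cs$ (property~\ref{it:aff}) keeps $w_s+Cs\in\calF_c(\calD_s)$ and $-(w_s+Cs)\in\tcalF_c(\calD_s)$, so inclusion~\eqref{SAThm} gives $u-(w_s+Cs)\in\SA(\calD_s)$; since this is $\le 0$ on $\partial\calD_s$, the maximum principle~\eqref{eq:mp} yields $u(t,x)\le u(t+s,x)+Cs$ on $\calD_s$. The symmetric argument, with $(w_s-Cs)+(-u)\in\SA(\calD_s)$, gives $u(t+s,x)\le u(t,x)+Cs$. Thus $|u(t+s,x)-u(t,x)|\le Cs$ for every $(t,x)\in\calD_s$ and every $s\in(0,1)$. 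Letting $s$ vary and using continuity of $u$ on $\overline\calD$, for each fixed $x\in D$ and all $t,t'\in[0,1]$ one gets $|u(t,x)-u(t',x)|\le C|t-t'|$, so $u(\,\cdot\,,x)\in C^{0,1}([0,1])$ with Lipschitz constant $\le C$ uniformly in $x$; combined with the sup bound above this gives the asserted uniform bound on $\sup_{x\in D}\|u(\,\cdot\,,x)\|_{C^{0,1}([0,1])}$.

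There is no deep obstacle here; the two points requiring care are (i) verifying that the $t$-translate $w_s$ really is of type $\calF_c$ on the shifted domain, i.e. translation invariance of the subequation plus stability of type-$F$ under passing to subdomains, and (ii) controlling the two ``end-cap'' boundary pieces at $t=0$ and $t=1-s$ — which is precisely where Lemma~\ref{lm:C0ts} is needed, since without it one would only know $u$ is continuous (not Lipschitz) up to $\{0,1\}\times D$.
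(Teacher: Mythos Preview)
Your proof is correct, and it takes a genuinely different route from the paper's. The paper fixes $t_0\in(0,1)$, builds the barrier $v(t,x)=u(t_0,x)-C(t-t_0)$, invokes Lemma~\ref{lm:rest} to see that the slice $u_{t_0}\in F_{c-\pi/2}$, and then Lemma~\ref{lm:subs} to conclude $v\in\calF_c([t_0,1]\times D)$; comparison with $u$ via~\eqref{SAThm} and~\eqref{eq:mp} gives the one-sided bound, and the other side follows by passing to $-u$ and $\calF_{-c}$. Your argument instead compares $u$ directly with its $t$-translate $w_s(t,x)=u(t+s,x)$ and uses only that $\calF_c$ is a constant-coefficient subequation (hence translation-invariant and stable under restriction to subdomains), together with Lemma~\ref{lm:C0ts} to control the end-caps of $\partial\calD_s$. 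This sidesteps both Lemma~\ref{lm:rest} and the explicit barrier construction of Lemma~\ref{lm:subs}, and yields the two-sided Lipschitz bound in a single stroke. The paper's approach, on the other hand, isolates the ingredient (type-$F_{c-\pi/2}$ convexity of time slices) that is of independent interest, and makes transparent why the constant $C$ is uniform in $t_0$.
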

\begin{proof}
We treat the forward Lipschitz bound. The proof of the backward Lipschitz bound is similar.
Fix $t_0 \in (0,1)$ and let
\beq\label{vuCInEq}
v(t,x)=u_{}(t_0,x)-C(t-t_0).
\eeq
By Lemma~\ref{lm:C0ts} and the assumption that $\vp|_{[0,1]\times D} \in C^{0,1}$, we can choose $C$ large enough so that
\beq\label{vuInEq}
 \h{$v\le u$   on    $\del([t_0,1]\times D)$.}
\eeq
Lemma~\ref{lm:rest} implies $u|_{\{t_0\}\times D} \in F_{c-\pi/2}\cap \left(-F_{-c-\pi/2}\right).$ So, Lemma~\ref{lm:subs} implies $v\in \calF_c([t_0,1]\times D),$ and inclusion~\eqref{SAThm} gives $v-u \in \SA([t_0,1]\times D).$ By \eqref{vuInEq} and the maximum principle~\eqref{eq:mp}, it follows that $v \leq u$ on $[t_0,1]\times D.$ Thus, since $v(x,t_0) = u(x,t_0)$ and $v$ is Lipschitz in $t$ by construction, we deduce that $u$ is Lipschitz in $t$ from below at $t_0$ with Lipschitz constant $C.$ Replacing $u$ by $-u$ and $\calF_c$ by $\calF_{-c}$, we get Lipschitz in $t$ from above with the same Lipschitz constant. For $t_0 = 0,$ we use the assumption on $\vp_0 = u_0$ in place of Lemma~\ref{lm:rest}.
\end{proof}

\section{The
calibration measure}
\label{CalibSec}

The goal of this section is to show that the calibration $\Re\O$
has a well-defined restriction to the Lagrangian $\graph(df)\subset \CC^n$, interpreted in a suitable weak sense, for any $f \in F_c$  (resp. $f \in -F_{-c}$) in the top (resp. bottom) branches. Here, by the top (resp. bottom) branches, we mean
\beq
\label{coutermostEq}
c\in I^n_{\topp}:=[(n-1)\pi/2,n\pi/2)
\h{\ \ \  (resp. $c\in I^n_{\bott}:=(-n\pi/2,-(n-1)\pi/2]$).}
\eeq
We call this restriction the {\it calibration measure}.
This is a priori non-trivial since the tangent space to the Lagrangian
need not exist everywhere.
The advantage of working in the outermost branches is that then
$F_c\subset \P$ or $-F_{-c}\subset -\P$. That is, our functions are convex/concave.
Intuitively, say, in the case $c \in I^n_{\topp},$ all but one of the eigenvalues of a matrix in $F_c$ must
be large, while the remaining eigenvalue is positive.
The following basic result is essentially a corollary of
the fundamental work of Rauch--Taylor  \cite{RauchTaylor}.
Let $X\subset \RR^n$ be a domain.
Denote by $M_p(X)$ differential $p$-forms on $X$ whose coefficients
are Borel measures on $X$, and endow $M_p(X)$ with the topology of weak convergence
of measures. Let $\dx:=dx^1\wedge \cdots \w dx^n$.

\begin{theorem}
\label{CalibThm}
\mbox{}
\begin{enumerate}
\item\label{it:ext}
For $\theta \in (-\pi,\pi],$ the map $C_\th: \pm\P(X)\cap C^2(X)\ra M_n(X)$ defined by
$$
C_\th(f): =\Re \big(e^{-\i \th} \det (I + \i\nabla ^2f)\big)\dx,
$$
admits a unique continuous extension to $\pm\P(X)$.
More precisely, if $f_i\in \pm\P(X)\cap C^2(X)$ converges to $f\in \pm\P(X)$ in the $C^0$ topology on compact subsets of $X,$
then $C_\th(f_i)$ converges weakly to $C_\th(f)$.
\item\label{it:calpos}
Let $c\in I^n_{\topp}$ (resp. $c \in I^n_{\bott}$) be such that $c = \theta-\pi/2 +2\pi l$  (resp. $c = \theta + \pi/2 + 2\pi l$) for some $l\in\Z$.
If $f \in F_c(X)$ (resp. $-f \in F_{-c}(X)$), then $C_\th(f)$ is a positive measure.
\end{enumerate}
\end{theorem}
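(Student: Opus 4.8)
The plan is to handle the two parts separately, with part~\eqref{it:ext} resting on the monotone-convergence machinery for Monge--Amp\`ere-type measures and part~\eqref{it:calpos} following from it by a density argument together with the structural description of matrices in $F_c$.

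For part~\eqref{it:ext}, the key observation is that for $f \in \pm\P(X)\cap C^2(X)$ the density $\Re\big(e^{-\i\th}\det(I+\i\nabla^2 f)\big)$ is, up to lower-order terms, a symmetric polynomial in the eigenvalues of $\nabla^2 f$, i.e.\ a linear combination of the elementary symmetric functions $\sigma_k(\nabla^2 f)$ with $k$ of a fixed parity determined by $\Re$ vs.\ $\Im$; indeed $\det(I+\i\nabla^2 f) = \sum_{k=0}^n \i^k \sigma_k(\nabla^2 f)$, so $\Re\big(e^{-\i\th}\det(I+\i\nabla^2 f)\big) = \cos\th\sum_{k\,\text{even}} (-1)^{k/2}\sigma_k + \sin\th\sum_{k\,\text{odd}}(-1)^{(k-1)/2}\sigma_k$. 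Each $\sigma_k(\nabla^2 f)$ for a convex function $f$ is precisely the Hessian measure studied by Rauch--Taylor~\cite{RauchTaylor} (the $k$-Hessian / mixed Monge--Amp\`ere measures), and these are well known to extend continuously from $C^2$ convex functions to all convex functions with respect to weak convergence of measures under $C^0_{\loc}$ convergence of the potentials. First I would quote this: each $f\mapsto \sigma_k(\nabla^2 f)\dx$ extends to a weakly continuous map $\P(X)\to M_n(X)$. Since $C_\th$ is a finite $\RR$-linear combination of these maps (with constant coefficients $\cos\th,\sin\th$ up to signs), it too extends continuously, and the extension is unique because $\P(X)\cap C^2(X)$ is $C^0_{\loc}$-dense in $\P(X)$ (mollification). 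The case $-\P(X)$ is identical after replacing $f$ by $-f$ and noting $\sigma_k(-A)=(-1)^k\sigma_k(A)$.

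For part~\eqref{it:calpos}, I would first treat $f\in F_c(X)\cap C^2(X)$ and then pass to the limit. The hypotheses $c\in I^n_{\topp}$ and $c\equiv\th-\pi/2\pmod{2\pi}$ are chosen precisely so that a matrix $A$ with $\tr\tan^{-1}(A)\ge c$ has a very constrained spectrum: writing $\mu_j=\tan^{-1}\la_j(A)\in(-\pi/2,\pi/2)$, we have $\sum_j\mu_j\ge c\ge (n-1)\pi/2$, which forces at least $n-1$ of the $\mu_j$ to be close to $\pi/2$ (hence the corresponding $\la_j$ large and positive) and the last one to satisfy $\mu_n > c-(n-1)\pi/2 \ge -\pi/2$, so in fact $F_c\subset\P$, all $\la_j\ge 0$. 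Now $e^{-\i\th}\det(I+\i A)$ has argument $\tilde\theta(A)-\th = \sum_j\mu_j - \th$, which lies in $[c-\th,\, n\pi/2-\th) = [-\pi/2+2\pi l,\, (n+1)\pi/2 - (c+\pi/2) - \th \ldots]$; the point is that $\tilde\theta(A)-\th \in [c-\th, n\pi/2-\th)$ and $c-\th = -\pi/2 + 2\pi l$ while the interval has length $< \pi/2 \cdot$(something)$<\pi$... more carefully: $\tilde\theta(A)-\th\in[c-\th,\, n\pi/2-\th)$ has length $n\pi/2-c\le\pi/2$, and its left endpoint is $\equiv -\pi/2\pmod{2\pi}$, so $\Re\big(e^{-\i\th}\det(I+\i A)\big) = |\det(I+\i A)|\cos(\tilde\theta(A)-\th)\ge 0$ since the angle lies in $[-\pi/2,0]\pmod{2\pi}$. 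Hence $C_\th(f)\ge 0$ pointwise for smooth $f\in F_c$. For general $f\in F_c(X)$, I would mollify: $f_\epsilon = f*\chi_\epsilon$ is smooth, convex, still of type $F_c$ (the subequation property is preserved by averaging, since $F_c$ is convex as a set — being a superlevel set of the concave function $\tr\tan^{-1}$ — and translation-invariant, so $\nabla^2 f_\epsilon(x)$ is an average of $\nabla^2 f$ evaluated at nearby points, all in $F_c$), and $f_\epsilon\to f$ in $C^0_{\loc}$. Then $C_\th(f_\epsilon)\ge 0$ for each $\epsilon$ by the smooth case, and $C_\th(f_\epsilon)\to C_\th(f)$ weakly by part~\eqref{it:ext}; a weak limit of nonnegative measures is nonnegative, so $C_\th(f)\ge 0$. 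The $-F_{-c}$ case is the mirror image.

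The main obstacle I anticipate is \emph{not} the positivity computation — that is just bookkeeping with the angle constraint — but rather making the monotone/weak continuity in part~\eqref{it:ext} genuinely rigorous: one must either cite Rauch--Taylor~\cite{RauchTaylor} in exactly the form "$\sigma_k(\nabla^2\cdot)\dx$ is weakly continuous under $C^0_{\loc}$ convergence on convex functions" (and check their hypotheses match, in particular that we only need local uniform convergence, not monotone convergence), or reprove it via the polarization/mixed-discriminant identity and integration by parts, which is the standard route for Hessian measures. A secondary subtlety is the claim that mollification preserves type $F_c$ for a subequation that is a superlevel set of a \emph{concave} function of the Hessian; this uses that $F_c$ is a convex subset of $\Sym(\RR^n)$ together with $\nabla^2(f*\chi_\epsilon) = (\nabla^2 f)*\chi_\epsilon$ in the distributional sense and Jensen's inequality, and one should be slightly careful since $f$ is only known to be of type $F_c$ in the viscosity sense a priori — here one invokes that $F_c\subset\P$ so $f$ is convex, hence twice differentiable a.e.\ with $\nabla^2 f\in F_c$ a.e.\ by property~\ref{it:hess}, which is enough to run the averaging argument.
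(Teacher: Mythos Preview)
Your proposal is correct and follows essentially the same route as the paper: expand $C_\theta$ as a linear combination of the $\sigma_k(\nabla^2 f)$ and invoke Rauch--Taylor~\cite{RauchTaylor} for part~\ref{it:ext}, then for part~\ref{it:calpos} verify positivity pointwise in the $C^2$ case via the angle constraint $\tilde\theta_f-\theta\in[-\pi/2,0)\pmod{2\pi}$ and pass to general $f\in F_c(X)$ by mollification, using that $F_c$ is a convex subset of $\Sym(\RR^n)$. One small correction and one packaging difference: $\tr\tan^{-1}$ is \emph{not} concave on all of $\Sym(\RR^n)$ (since $\tan^{-1}$ is convex on $(-\infty,0]$), so the convexity of $F_c$ genuinely requires first observing $F_c\subset\Int\P$ (the paper's Lemma~\ref{lm:contP}) before applying Davis' theorem; and the paper organizes the mollification step by first proving $F_c(X)$ is convex \emph{as a set of functions} (Alexandrov a.e.\ differentiability plus Theorem~\ref{tm:ae}) and then approximating $f*\chi_\epsilon$ by Riemann sums of translates of $f$, which avoids having to argue separately that the singular part of the distributional Hessian of a convex $f$ does no harm in your pointwise averaging of $\nabla^2 f$.
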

\begin{remark}
The measure $C_\theta(f)$ is the restriction of $\Re\Omega$ to $\Lambda_f := \graph(df) \subset \C^n$ in the following sense. Let $g : \RR^n \to \Lambda_f$ be given by $g(x) = x + \sqrt{-1}df(x).$ Then by formula~\eqref{CndzEq} we have
\[
g^*\Re\left(e^{-\i\th}\Omega\right) = C_\theta(f).
\]
\end{remark}
The proof of Theorem~\ref{CalibThm} is given at the end of this section. It relies on the following result of Rauch--Taylor \cite[Proposition 3.1]{RauchTaylor}.
\bprop\label{pr:RT}
Let $I:=\{i_1,\ldots,i_q\}$ denote a set of integers with
 $1\le i_1<\cdots< i_q\le n$. Then the operator $\calM_I:C^2(X)\cap\P(X)
\ra M_q(X)$ defined by
$$
\calM_I(u) = d\frac{\del u}{\del x_{i_1}}\w\cdots
\w d\frac{\del u}{\del x_{i_q}}
$$
admits a unique continuous extension to an operator
defined on all of $\calP(X)$ with respect to the
$C^0(X)$ topology on $\calP(X)$ and the weak convergence of measures
on $M_q(X)$.
\eprop
We also need the following lemmas.
\begin{lemma}\label{lm:contP}
If $c \in I^n_{\topp},$ then $F_c \subset \Int\P.$
\end{lemma}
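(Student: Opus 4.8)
The plan is to unwind the definitions of $F_c$ and $\Int\P$ and reduce the claim to an elementary inequality on eigenvalues. Recall from~\eqref{eq:Fc} that $F_c = \{A \in \Symn : \tr\tan^{-1}(A) \ge c\}$, where $\tan^{-1}$ acts eigenvalue-by-eigenvalue with values in $(-\pi/2,\pi/2)$; thus for $A$ with eigenvalues $\lambda_1 \le \cdots \le \lambda_n$ one has $\tr\tan^{-1}(A) = \sum_{i=1}^n \tan^{-1}\lambda_i$, each summand lying strictly in $(-\pi/2,\pi/2)$. On the other hand, $\Int\P$ is precisely the set of positive definite matrices. So it suffices to show that $A \in F_c$ together with $c \in I^n_{\topp}$ forces every eigenvalue of $A$ to be strictly positive.

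First I would isolate the smallest eigenvalue. Since each of the remaining $n-1$ summands satisfies $\tan^{-1}\lambda_i < \pi/2$, we obtain $\sum_{i=2}^n \tan^{-1}\lambda_i < (n-1)\pi/2 \le c$, the last inequality because $c \ge (n-1)\pi/2$. Combining this with $\sum_{i=1}^n \tan^{-1}\lambda_i \ge c$ and subtracting yields $\tan^{-1}\lambda_1 > c - (n-1)\pi/2 \ge 0$, hence $\lambda_1 > 0$. As $\lambda_1$ is the least eigenvalue, $A$ is positive definite, that is, $A \in \Int\P$.

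The argument uses no regularity theory, only the monotonicity and boundedness of $\tan^{-1}$, so I do not expect any real obstacle; it is in fact a statement purely about the matrix set $F_c$. The single point to watch is the degenerate case $n = 1$, where there is no further eigenvalue to absorb and the estimate only gives $\lambda_1 \ge \tan c \ge 0$; one then either restricts to $n \ge 2$ or uses that $c$ may be taken positive. For $n \ge 2$ the inclusion holds for every $c \in [(n-1)\pi/2, n\pi/2)$, including the left endpoint, precisely because the bound on the other $n-1$ eigenvalues is strict.
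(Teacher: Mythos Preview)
Your argument is correct and is precisely the detailed version of the paper's one-line proof, which simply asserts that $\tr\tan^{-1}(A)\ge c\ge (n-1)\pi/2$ forces $A$ to be positive definite; you have supplied the eigenvalue inequality the paper leaves implicit. Your caveat about the $n=1$, $c=0$ endpoint is legitimate (indeed $F_0=[0,\infty)\not\subset(0,\infty)$ in that case), though the paper does not address this edge case either and the applications are for $n\ge 2$.
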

\begin{proof}
By definition, $A \in F_c$ iff $\tr\tan^{-1}(A) \geq c.$ In particular, $c \in I^n_{\topp}$ implies that $A$ is positive definite.
\end{proof}
\begin{lemma}\label{Fcconv}
If $c \in I^n_{\topp},$ then $F_c \subset \Symn$ is a convex subset.
\end{lemma}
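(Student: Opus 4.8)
The plan is to deduce the convexity of $F_c$ from the concavity of the matrix trace function $A \mapsto \tr\tan^{-1}(A)$ on the open cone of positive definite matrices. The key point — and the reason the outermost branch is special — is that $\tan^{-1}$ is concave on $[0,\infty)$ but not on all of $\R$.

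First I would invoke Lemma~\ref{lm:contP} to record that $F_c \subset \Int\P$. Hence, given $A_0, A_1 \in F_c$ and $t \in [0,1]$, the segment $A_t := (1-t)A_0 + t A_1$ lies in the open convex cone $\Int\P$, and it suffices to prove
\[
\tr\tan^{-1}(A_t) \ge (1-t)\,\tr\tan^{-1}(A_0) + t\,\tr\tan^{-1}(A_1),
\]
since the right-hand side is at least $c$. In other words, the lemma reduces to the concavity of $A \mapsto \tr\tan^{-1}(A) = \sum_j \tan^{-1}(\lambda_j(A))$ on $\Int\P$.

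For this I would use two elementary facts. First, $h(\lambda):=\tan^{-1}(\lambda)$ is concave on $[0,\infty)$, since $h''(\lambda) = -2\lambda/(1+\lambda^2)^2 \le 0$ there. Second, Schur's majorization theorem: for $M \in \Symn$ and any orthonormal basis $v_1,\dots,v_n$, the vector $(\langle v_i, M v_i\rangle)_i$ is majorized by the eigenvalue vector of $M$. Now write $C := (A_0+A_1)/2 \in \Int\P$ with orthonormal eigenbasis $v_1,\dots,v_n$ and eigenvalues $\mu_i = \langle v_i, Cv_i\rangle = \frac12(\langle v_i,A_0v_i\rangle + \langle v_i,A_1v_i\rangle) \in (0,\infty)$. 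Concavity of $h$ applied coordinatewise (all arguments lie in $(0,\infty)$ since $A_0,A_1$ are positive definite) gives $\tr h(C) = \sum_i h(\mu_i) \ge \frac12\sum_i h(\langle v_i,A_0v_i\rangle) + \frac12\sum_i h(\langle v_i,A_1v_i\rangle)$, while majorization together with concavity of $h$ gives $\sum_i h(\langle v_i,A_kv_i\rangle) \ge \sum_j h(\lambda_j(A_k)) = \tr h(A_k)$ for $k=0,1$. Combining yields midpoint concavity of $\tr\tan^{-1}$ on $\Int\P$, and full concavity follows since $\tr\tan^{-1}$ is continuous. (Alternatively, one may simply quote the standard fact that $A\mapsto \tr h(A)$ is concave whenever $h$ is concave on an interval containing the relevant spectra.)

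The routine steps are the reduction through Lemma~\ref{lm:contP} and the passage from midpoint to full concavity; the only genuine content is the matrix-trace concavity, which rests entirely on the concavity of $\arctan$ on the positive half-line. I expect the main subtlety to be purely conceptual: keeping the whole argument inside $\Int\P$, since the corresponding statement \emph{fails} for the inner branches, where elements of $F_c$ may have negative eigenvalues and $\arctan$ is convex there.
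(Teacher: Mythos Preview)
Your proof is correct and follows essentially the same route as the paper: invoke Lemma~\ref{lm:contP} to place $F_c$ inside $\Int\P$, then use the concavity of $A \mapsto \tr\tan^{-1}(A)$ on positive definite matrices. The only difference is that the paper simply cites Davis~\cite{Davis} for this trace-concavity fact, whereas you supply a direct Schur--majorization argument (and also note the option of quoting the standard result).
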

\begin{proof}
Recall that $\tan^{-1}$ is concave on the positive real axis. So,
it follows from Lemma~\ref{lm:contP} that
if $A,B \in F_c$ and $t \in [0,1],$ then
\cite[Corollary 1]{Davis}
\[
\tr\tan^{-1}(tA + (1-t)B) \geq t \,\tr\tan^{-1}(A) + (1-t) \tr\tan^{-1}(B) \geq t c + (1-t) c  = c.
\]
Thus $tA + (1-t)B \in F_c$ as desired.
\end{proof}

\begin{lemma}\label{lm:FcXconv}
If $c \in I^n_{\topp}$, then $F_c(X)$ is convex.
\end{lemma}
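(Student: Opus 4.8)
The plan is to deduce convexity of $F_c(X)$ from convexity of $F_c$ as a subset of $\Symn$ (Lemma~\ref{Fcconv}), together with the characterization of functions of type $F$ for a \emph{convex} subequation. Recall that $u \in F_c(X)$ means $u + v \in \SA(X)$ for all $v \in C^2(X)$ with $\nabla^2 v \in \widetilde F_c = F_{\pi/2-c}$ everywhere (using \eqref{FcDualEq}). So, given $u_0,u_1 \in F_c(X)$ and $s \in [0,1]$, I must show $su_1 + (1-s)u_0 + v \in \SA(X)$ for every such test function $v$.

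The key step is to reduce to the case where $u_0, u_1$ are smooth, via the standard regularization already used in the proof of Lemma~\ref{lm:subs}: replace $u_i$ by its sup-convolution $u_i^\eps(x) = \sup_{y}[u_i(y) - \eps^{-1}|x-y|^2]$, which (by \cite[Theorem 8.2]{HL}) is $\tfrac1\eps$-quasiconvex, decreases to $u_i$ as $\eps \to 0$, and lies in $F_c(X_\delta)$ on shrinking subdomains $X_\delta$. Since a quasiconvex function is twice differentiable almost everywhere by Alexandrov's theorem, property~\ref{it:hess} gives $\nabla^2 u_i^\eps(x) \in F_c$ for a.e.\ $x \in X_\delta$. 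Then for a.e.\ $x$,
\[
\nabla^2\big(su_1^\eps + (1-s)u_0^\eps\big)(x) = s\,\nabla^2 u_1^\eps(x) + (1-s)\,\nabla^2 u_0^\eps(x) \in F_c
\]
by Lemma~\ref{Fcconv}, the convexity of $F_c$. Moreover $su_1^\eps + (1-s)u_0^\eps$ is again quasiconvex (a convex combination of quasiconvex functions), so Theorem~\ref{tm:ae} yields $su_1^\eps + (1-s)u_0^\eps \in F_c(X_\delta)$. Finally, let $\eps \to 0$: the functions $su_1^\eps + (1-s)u_0^\eps$ decrease to $su_1 + (1-s)u_0$, and $\delta = \delta(\eps) \to 0$, so arguing exactly as at the end of the proof of Lemma~\ref{lm:subs} — checking $su_1 + (1-s)u_0 + f \in \SA(X)$ for $f \in C^2(X) \cap \widetilde F_c(X)$ by combining property~\ref{it:dl} with the fact that $su_1^\eps + (1-s)u_0^\eps + f \in \SA(X_\delta)$ — we conclude $su_1 + (1-s)u_0 \in F_c(X)$.

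I expect the main subtlety, though entirely routine given the earlier material, to be the passage through the regularization: one must be careful that quasiconvexity is preserved under convex combinations (the quasiconvexity constant of $su_1^\eps + (1-s)u_0^\eps$ is at most $\max\{\,\tfrac1\eps,\tfrac1\eps\,\} = \tfrac1\eps$, so this is fine) and that the domain-shrinking bookkeeping with $X_\delta$ matches the SA-checking argument verbatim from Lemma~\ref{lm:subs}. No genuinely new ideas are needed beyond Lemma~\ref{Fcconv} plus Theorem~\ref{tm:ae}.
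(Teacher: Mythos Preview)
Your proof is correct, but it takes an unnecessarily roundabout route compared with the paper. The paper observes first, via Lemma~\ref{lm:contP}, that $F_c \subset \P$ and hence $F_c(X) \subset \P(X)$; thus any $f_0,f_1 \in F_c(X)$ are \emph{already} convex, and Alexandrov's theorem applies directly to them without any regularization. Then property~\ref{it:hess} gives $\nabla^2 f_i(x) \in F_c$ a.e., Lemma~\ref{Fcconv} gives $t\nabla^2 f_0(x) + (1-t)\nabla^2 f_1(x) \in F_c$ a.e., and since $tf_0 + (1-t)f_1$ is convex (hence quasiconvex), Theorem~\ref{tm:ae} finishes the proof in one step.

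Your sup-convolution detour and the ensuing limit argument (shrinking domains $X_\delta$, decreasing limits via property~\ref{it:dl}) all work, but they are superfluous here: you are regularizing functions that are already convex, and the only purpose of the regularization is to produce quasiconvexity so that Alexandrov and Theorem~\ref{tm:ae} apply --- which they already do for the original $u_0,u_1$. The one thing your approach would buy is that it does not explicitly invoke Lemma~\ref{lm:contP}; but you still rely on $c \in I^n_{\topp}$ through Lemma~\ref{Fcconv}, so there is no real gain in generality.
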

\begin{proof}
By Lemma~\ref{lm:contP}, we have $F_c(X) \subset \P(X).$ So, $F_c(X)$ consists of convex functions. Let $f_0,f_1 \in F_c(X).$ By Alexandrov's theorem, $f_i$ is a.e. twice differentiable. By property~\ref{it:hess} we have $\nabla^2f_i(x) \in F_c$ for almost every $x.$ For $t \in [0,1],$ Proposition~\ref{Fcconv} implies
\[
t \nabla^2 f_0(x) + (1-t) \nabla^2 f_1(x) \in F_c\cap \P= F_c
\]
for almost every $x.$ So $t f_0 + (1-t) f_1 \in F_c(X)$ by Theorem~\ref{tm:ae}.
\end{proof}
In the following, for $i \in \Z_{>0}$ we write $X_i = \{x \in X\,:\,\dist(x,D) > i^{-1}\}.$
\begin{lemma}\label{lm:approx}
Let $F \subset \Symn$ be a subequation such that $F(X_i)$ is convex for $i \in \Z_{>0}.$ Then for each $f \in C^0(X) \cap F(X)$ there exists a sequence $f_i \in C^\infty(X_i)\cap F(X_i)$ that converges uniformly to $f$ on every compact subset of $X.$
\end{lemma}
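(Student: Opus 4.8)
The plan is to obtain $f_i$ by mollifying $f$. Fix a standard mollifier: a nonnegative $\chi\in C^\infty(\RR^n)$ with $\supp\chi\subset B_1(0)$ and $\int_{\RR^n}\chi=1$, and for $\eps>0$ set $\chi_\eps(y):=\eps^{-n}\chi(y/\eps)$. Take $\eps_i:=1/(2i)$ and put $f_i:=f*\chi_{\eps_i}$, that is, $f_i(x)=\int_{\RR^n}f(x-y)\,\chi_{\eps_i}(y)\,dy$ for $x\in X_i$. By the definition of $X_i$, every point of $X_i$ lies at distance greater than $1/i$ from $\RR^n\setminus X$, and $\eps_i<1/i$, so the integral is well defined on $X_i$ and $f_i\in C^\infty(X_i)$ by the usual properties of convolution. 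Since $\eps_i\to0$ and $f$, being continuous, is uniformly continuous on compact subsets of $X$, the $f_i$ converge to $f$ uniformly on every compact subset of $X$. It therefore remains to prove $f_i\in F(X_i)$ for each $i$.

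For this I would write $f_i$, on $X_i$, as a locally uniform limit of finite convex combinations of translates of $f$. Since $F\subset\Symn$ does not depend on $x$, being of type $F$ is invariant under translations of the domain, and it is in any case a local property (immediate from the definitions; cf.\ \cite{HL}); hence, as $X_i-y\subset X$ whenever $|y|<1/i$, the function $x\mapsto f(x-y)$ lies in $F(X_i)$ for every such $y$. By the convexity hypothesis on $F(X_i)$ and induction, $F(X_i)$ is closed under finite convex combinations. Partition a cube $Q\supset B_{\eps_i}(0)$ into subcubes $Q_1,\dots,Q_N$ of mesh $h$ with centers $y_k$, and set $S_h:=\sum_k w_k\,f(\,\cdot\,-y_k)$ with $w_k:=\chi_{\eps_i}(y_k)\,|Q_k|\ge0$. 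Since $\chi_{\eps_i}$ is supported in $B_{\eps_i}(0)$ and $\eps_i<1/i$, for $h$ small every center $y_k$ with $w_k\neq0$ satisfies $|y_k|<1/i$, so $f(\,\cdot\,-y_k)\in F(X_i)$ as observed above. As $h\to0$ one has $W_h:=\sum_k w_k\to\int_{\RR^n}\chi_{\eps_i}=1$ and $S_h\to f_i$ uniformly on compact subsets of $X_i$ (by uniform continuity of $f$ and $\chi_{\eps_i}$ on the relevant compacta); hence the normalized Riemann sums $\widetilde S_h:=W_h^{-1}S_h$ converge to $f_i$ uniformly on compact subsets of $X_i$ as well. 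Each $\widetilde S_h$ is a finite convex combination of members of $F(X_i)$, hence lies in $F(X_i)$; taking $h=1/m\to0$ and applying property \ref{it:uniflimit} gives $f_i\in F(X_i)$, completing the proof.

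The one point requiring genuine care is the passage from the convexity of $F(X_i)$ — which a priori yields only \emph{finite} convex combinations of functions of type $F$ — to the continuous average $f*\chi_{\eps_i}$; the Riemann-sum approximation, together with the normalization by $W_h^{-1}$ (needed since the discrete weights only sum to $1$ in the limit) and the $C^0$-stability property \ref{it:uniflimit}, is exactly what bridges this gap. The remaining ingredients — translation- and restriction-invariance of type $F$, the smoothness and local uniform convergence of mollifications, and the uniform-continuity estimates — are routine.
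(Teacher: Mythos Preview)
Your proof is correct and follows essentially the same approach as the paper: mollify $f$ and then show the mollification lies in $F(X_i)$ by approximating the convolution integral with Riemann sums, which are convex combinations of translates of $f$ and hence lie in $F(X_i)$ by the convexity hypothesis, and finally pass to the limit via property~\ref{it:uniflimit}. The only cosmetic difference is that the paper arranges the Riemann-sum weights to sum exactly to $1$ by choosing the sample points appropriately, whereas you normalize by $W_h^{-1}$; both devices achieve the same end.
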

\begin{proof}
Let $\eta : \RR^n \to \RR$ be smooth with support in the unit ball at the origin and $\int \eta = 1.$ For $i \in \Z_{>0},$ let $\eta_i: \RR^n \to \RR$ be given by $\eta_i(x) = i^n \eta(i x).$ Consider $f \in C^0(X) \cap F(X).$ The sequence $f_i \in C^\infty(X_i)$ given by
\[
f_i(x)  = \eta_i * f(x) = \int_{X} \eta_i(y) f(x-y) dy
\]
converges uniformly to $f$ on every compact subset of $X.$ It remains to show that $f_i \in F(X_i).$ Indeed, recall that a partition $P$ of a cell $I = \prod_{i=1}^n [a_i,b_i]$ is a collection of (not necessarily closed) cells such that $I = \coprod_{J \in P} J.$ A collection of midpoints $Y$ for a partition $P$ is a collection of elements $y_J \in J$ for each $J \in P.$ The content of $I$ is denoted by $c(I) = \prod_{i=1}^n (b_i-a_i).$
On any compact subset of $X,$ the function $f$ is uniformly continuous. Moreover, $\eta$ is Lipschitz. So, there exists a sequence of partitions $P_{ij}$ of the hypercube $[-1/i,1/i]^n$ such that for any collections of midpoints $Y_{ij},$ the associated Riemann sums
\[
S(P_{ij},Y_{ij})(x) = \sum_{J \in P_{ij}} \eta(y_J)f(x- y_J) c(J)
\]
approximate $f_i(x)$ uniformly in $x$ on compact subsets of $X_i.$
Moreover, we can choose the sets of midpoints $Y_{ij}$ for $P_{ij}$ such that
\[
\sum_{J \in P_{ij}} \eta_i(y_J)c(J) = 1,
\]
so the Riemann sum $S(P_{ij},Y_{ij})$ is a convex combination of functions in $F(X_i).$ Since $F(X_i)$ is convex by assumption, we deduce that $S(P_{ij},Y_{ij}) \in F(X_i).$ The lemma follows by property~\ref{it:uniflimit}.
\end{proof}

\begin{proof}[Proof of Theorem~\ref{CalibThm}]
Denote by $\sigma_k(A)$ the elementary symmetric  polynomial of degree $k$ in the eigenvalues of $A$.
First, for $f \in C^2(X),$ we have
\begin{align*}
C_\th(f)
&=
\cos\th \sum^{\lfloor n/2\rfloor}_{k=0}(-1)^{k}\sigma_{2k}(\nabla ^2f)\dx
+ \sin\th \sum_{k = 0}^{\lfloor (n-1)/2\rfloor}(-1)^k \sigma_{2k+1}(\nabla^2 f) \dx.
\end{align*}
A detailed explanation of how to prove this identity can be found in~\cite[p. 91]{HL82}. Since
\[
C_{-\theta}(-f) = C_{\theta}(f),
\]
it suffices to prove the theorem in the case $f \in F_c(X)$ with $c \in I^n_{\topp}.$
For each $k=0,\ldots,n$, we claim the map $C^2(X)\ni f\mapsto \sigma _k (\nabla ^2f)\dx\in M_n(X)$ admits a unique
continuous extension to a map from $\P(X)$ to $M_n(X)$.
For that, recall that $\sigma_k(A)$ can be written as a sum of (up to sign) determinants of principal
$k$-by-$k$ sub-matrices of $A$. Thus, the claim, and hence also part~\ref{it:ext} of the theorem, follows from Proposition~\ref{pr:RT}.

It remains to prove part~\ref{it:calpos}. We assume first that $f \in C^2(X) \cap F_c(X)$ and prove $C_\theta(f)$ is a positive measure. Indeed, recall the definition of the Lagrangian angle $\theta_f$ and its lift $\tilde\theta_f$ from~\eqref{eq:LA} and~\eqref{liftedSLangleEq}. Since $f \in F_c(X),$ we have $\tilde \theta_f \geq c.$ On the other hand, $\tilde\theta_f = \tr\tan^{-1}(\nabla^2f) < n\pi/2.$ So, the assumption on $c$ implies that $\theta - \pi/2 + 2\pi l\leq \tilde\theta_f < \theta + 2\pi l.$ In particular, $\theta_f \in [\theta- \pi/2,\theta) \subset S^1.$ Thus
\[
C_\theta(f) =\Re \left(e^{-\i (\th - \theta_f)} \left|\det (I + \i\nabla ^2f)\right|\right)\dx = \cos(\theta - \theta_f)\left|\det (I + \i\nabla ^2f)\right| \geq 0,
\]
as desired.

Finally, consider the case of general $f \in F_c(X).$ By Lemmas~\ref{lm:FcXconv} and~\ref{lm:approx}, there exists a sequence of functions $f_i \in C^2(X_i)\cap F_c(X_i)$ converging to $f$ in the $C^0$ topology on compact subsets of $X.$ By part~\ref{it:ext} of the theorem, $C_\th(f) = \lim_{i\to\infty} C_\th(f_i).$ Being a limit in the weak topology of positive measures, $C_\th(f)$ must be positive.
\end{proof}

\section{The length and calibration functionals}
\label{CalFunctionalSec}
\subsection{Length of weak solutions of
the geodesic equation}\label{ssec:length}

Combining Theorems \ref{tm:cgeq} and \ref{CalibThm},
Corollary \ref{calFcConvCor}
and Lemma \ref{tm:Lip},
we obtain that
the length of the weak geodesics produced
in this article is well-defined in the outermost branches.

\bthm
\label{lengthThm}
Let $c \in I_{\topp}^{n+1}$
(recall (\ref{coutermostEq})) and let $\vp\in
C^0(\del\calD)$
be a consistent function such that
$$
\vp_i = \vp|_{\{i\}\times D}\in F_{c-\pi/2}\cap(-F_{-c-\pi/2})=F_{c-\pi/2}, \q i=0,1,
$$
and $\vp|_{[0,1]\times \del D}$ is affine in $t.$
Let $k$ be the solution of the $\calF_c$-Dirichlet
problem for $(\calD,\vp)$ given by Theorem \ref{tm:cgeq}. Let $\theta \in (-\pi,\pi]$ satisfy $\theta + 2\pi l = c$ for $l \in \Z.$ Then the length integral
$$
\int_0^1\left(\sqrt{\int_D (\dot k(t,x))^2C_\theta(k_t)(x)}\right)dt
$$
is well-defined.
\ethm

\subsection{The calibration functional}\label{ssec:cal}

We now briefly return to the setting of
 Lagrangians in a general
Calabi--Yau manifold $X$ as in \S\ref{SPLSubSec}.
Denote by $\calO$ an orbit of $\ham(X,\omega)$ acting on the space $\calL$ of oriented Lagrangian submanifolds of $X$ diffeomorphic to a manifold $L.$ The Lagrangian submanifolds in $\calO$ need not be positive. If $L$ is compact, denote by $d \in H_n(X)$ the homology class represented by the Lagrangian submanifolds in $\calO.$ The following is a restatement of~\cite[Theorem 1.1]{S1}

\def\Ga{\Gamma}

\bthm
\label{CalibFnctlThm}
Let $\beta$ be a closed $n$-form on $X$ such that $\omega \wedge \beta = 0$ and if $L$ is compact, $\int_d \beta = 0.$ Let $\La:[0,1]\ra\calO$ denote
a smooth path in $\calO$. Then the integral
$$
\calC_\beta(\La)=\int_0^1\left(\int_{\La_t} \frac{d\La_t}{dt}\beta|_{\Lambda_t}\right) dt
$$
depends only on the homotopy class of $\La$ relative to its endpoints.
\ethm
By type analysis, we have $\omega \wedge \Omega = 0,$ so we can take $\beta = \Re \Omega$ or $\Im\Omega$ or a linear combination thereof. Let $\calO_\th \subset \calO \cap \calL_\th^+$ be a connected component and consider $\beta = \Im\big(e^{-\i\th}\O\big).$ Holding $\Lambda_0$ fixed and varying $\Lambda_1,$ we obtain from $\calC_\beta(\Lambda)$ a functional on the universal cover $\widetilde \calO_\th,$ which is shown in~\cite[Sec. 5]{S1} to be convex with respect to the metric $(\cdot,\cdot)_\th$ and to have critical points over special Lagrangians. In particular, if any two points in $\calO_\th$ can be connected by a geodesic of $(\cdot,\cdot)_\th,$ then $\calO_\th$ contains at most one special Lagrangian. This is one of the main reasons for studying the DSL equation.

In the following, consider instead $\beta = \Re\big(e^{-\i\th}\O\big).$ For this choice of $\beta,$ abbreviate $\calC = \calC_\beta.$ In the compact case, we cannot impose $\int_d \beta = 0,$ because the Lagrangians in $\calO_\th,$ which represent $d,$ are positive. So,  $\calC$ is not well defined in the compact case. However, it suffices for the purposes of the following discussion to restrict to $L$ non-compact.

We call $\calC$ the {\it calibration functional}
since its Fr\'echet differential at $\La$ holding $\Lambda_0$ fixed
is precisely the restriction of
the calibration $\Re\big(e^{-\i\th}\O\big)$ to $\La_1$ considered as a linear function on $T_{\Lambda_1}\calO \simeq C_0^\infty(\Lambda_1).$ See~\cite[Prop. 3.3]{S1}. It is thus natural to restrict $\Lambda_1$ to belong to a connected component $\calO_\th \subset \calO \cap \calL_\th^+.$
Informally, $\calC$ can be thought of as the function on
$\calO_\th$ whose gradient is the vector field with value the constant function $1$
on each Lagrangian $\Gamma \in \calO_\th,$ or as the potential
for the 1-form defined by the calibration measure. For a brief overview of an analogous functional that appears in the context of the complex \MA equation, see the beginning of Section 4 in~\cite{R} and references therein.

The importance of the functional $\calC$ is
that it is linear along smooth geodesics.

\bthm
\label{LinearSmoothThm}
Let $\Lambda : [0,1]^2 \to \calO$ be a family of paths and write $\Lambda_{t,\tau} = \Lambda(t,\tau).$ Suppose the path $t \mapsto \Lambda_{t,0}$ is constant, and the path $t \mapsto \Lambda_{t,1}$ is a geodesic in $(\mathcal O_\th,(\cdot,\cdot)).$
Then
\[
\frac{d^2}{dt^2} \calC\left(\La|_{\{t\}\times[0,1]}\right)=0.
\]
\ethm

\bpf
By~\cite[Prop. 3.3]{S1} we have
$$
\frac{d}{dt} \calC(\La|_{\{t\}\times[0,1]})=\int_{\La_{t,1}} \frac{d\La_{t,1}}{dt}\Re\big(e^{-\i\th}\O\big).
$$
Before differentiating once more in $t$,
we rewrite this integral using
the family of diffeomorphisms
$\tilde g_t:L \ra\La_{t,1}$
from \S\ref{LeviCivitaSubSec} that
satisfies
\begin{equation*}
\iota_{d\tilde g_t/dt}\Re\big(e^{-\i\th}\O\big)=0.
\end{equation*}
Then
\begin{equation}\label{eq:dtreOm}
\frac{d}{dt}\tilde g_t^*\Re\big(e^{-\i\th}\O\big)
=d \iota_{d\tilde g_t/dt}\Re\big(e^{-\i\th}\O\big)=0
\end{equation}
for all $t\in[0,1]$. Writing $h_t = \frac{d\Lambda_{t,1}}{dt},$ we have
$$
\frac{d}{dt} \calC(\La|_{\{t\}\times[0,1]})
=
\int_{L}
(h_t\circ \tilde g_t) \tilde g_t^*\Re\big(e^{-\i\th}\O\big).
$$
Therefore, differentiating in $t$ gives
\[
\frac{d^2}{dt^2} \calC(\La|_{\{t\}\times[0,1]})
=
\int_{L}
\del_t(h_t\circ \tilde g_t) \tilde g_t^*\Re\big(e^{-\i\th}\O\big) + \int_{L}
(h_t\circ \tilde g_t) \del_t\left(\tilde g_t^*\Re\big(e^{-\i\th}\O\big)\right) =0
\]
by~\eqref{ReformulateGeodEq}, the geodesic equation $\frac{Dh_t}{dt} = 0,$ and~\eqref{eq:dtreOm}.
\epf

We now return to the setting of graph Lagrangians
in $X=\CC^n$. Restricting to paths $\Lambda$ in graphs of differentials of convex functions, we find that $\calC(\Lambda)$ depends only on $\Lambda_0$ and $\Lambda_1.$ We discuss the expected behavior of $\calC$ along Harvey--Lawson solutions of DSL. Parallel statements in the concave case also hold but will be omitted.

Fix $v \in \P(\RR^n)\cap C^\infty(\RR^n)$ and denote by
$$
\P_v(\RR^n)
$$
the set of all $u \in \P(\RR^n)$ such that $u - v$ has compact support. For $u \in \P_v(\RR^n),$ define $k \in C^0([0,1]\times \R^n)$ by $k(t,x) = tu(x) + (1-t)v(x)$ and write $k_t(x) = k(t,x).$ So, $k$ is differentiable in $t,$ the $t$ derivative $\dot k$ has compact support, and $k_t$ is convex for all $t.$ Let $\theta \in (-\pi,\pi].$ Then by Theorem~\ref{CalibThm}, the functional
\[
\calC(v,u):= -\int_0^1\left(\int_{D}\partial_t k(t,x) \,
C_\theta(k_t)\right)dt
\]
is well defined.

Take $\calO$ to be the orbit of the Lagrangian $\graph(v)\subset \C^n$ under $\ham(\C^n,\omega).$ If $u \in C^\infty(\RR^n),$ then also $k \in C^\infty([0,1]\times \RR^n).$ So, we have a path $\Lambda: [0,1] \to \calO$ given by $\Lambda_t = \graph(dk_t) \subset \C^n$. The calculations of \S\ref{sec:graphs} show that $\calC(u,v)=\calC(\La)$.

Motivated by results in pluripotential theory
for the \MA operator \cite[Remark~4.5]{BermanBoucksom}, it is natural
to make the following conjecture characterizing
weak geodesics. Let $c \in I_{\topp}^{n+1}$ 
(recall (\ref{coutermostEq})) and
let $u\in\calF_c((0,1)\times\RR^n)$ be such that $u_t - v$ has compact support for $t \in (0,1).$ In particular, by Lemmas~\ref{lm:rest} and~\ref{lm:contP}, we have $u_t \in \P_v(\RR^n)$ for $t\in (0,1).$

\bconj
The function
$t\mapsto \calC(v,u_t)$ is affine in $t$
if and only if $-u\in \widetilde\calF_c((0,1)\times\RR^n)$, that is, $u$ solves the DSL in the sense of \HL.
\econj
\begin{remark}
It is not immediately clear how to formulate the preceding conjecture if we consider a bounded domain $D \subset \RR^n$ instead of $\RR^n.$ Indeed, even if $u,v \in C^\infty(D),$ satisfy $u|_{\del D} = v|_{\del D},$ it may not be the case that $du_x = dv_x$ for $x \in \del D.$ Thus the boundaries of $\graph(du)$ and $\graph(dv)$ need not coincide. The calibration functional does not in general behave well on families of Lagrangians that do not agree at their boundaries.
\end{remark}

\appendix
\section{Limiting eigenvalues and a formula for
the lifted space-time Lagrangian angle}
\lb{LimitDSet}

The purpose of this subsection is to derive
an explicit formula for the lifted space-time angle
$\wtThk$. The result is stated
in Corollary \ref{wtThkFormulaCor}.
This formula is not needed for any of our main results,
but we feel it is of independent interest. Also, it furnishes
alternative proofs of some of the results in Section
\ref{DDSec}.

Let $I^a_n:=\diag(a,1,\ldots,1)\in\Sym(\R^{n+1})$. It is natural to approximate the DSL \eqref{DSLMainEq} by the following
family of equations parametrized by $p>0$:
\beq\lb{GraphDSLApproxEq}
\Im\big( e^{-\i \th}\det(I^{1/p^2}_n+\i\nabla^2 k)\big)=0.
\eeq
This can be rewritten as
\beq\lb{GraphDSLApprox2Eq}
\Im\big(e^{-\i \th}\det(I+\i I^{p}_n\nabla^2 k I^{p}_n)\big)=0.
\eeq
The reason for rewriting the equation
in this manner is that the matrix version of the special Lagrangian (SL) equation on $\R^{n+1}$,
\beq\lb{GraphSLEq}
\Im\big(e^{-\i \th}\det(I+\i A)\big)=0,
\eeq
is equivalent to the equation \cite[p. 438]{HL}
\beq\lb{GraphSL2Eq}
\sum_{i=0}^{n}\tan^{-1}\la_i(A)=c+k\pi, \q k\in\Z,\; |k|<(n+1)/2,
\eeq
where $\tan^{-1}:\RR\ra(-\pi/2,\pi/2)$, and
 $\{\la_i(A)\}_{i=0}^n$ are the (real) eigenvalues of the symmetric $(n+1)$-by-$(n+1)$ matrix
$A$ ordered so that $\la_0(A) \geq \cdots \ge \la_{n}(A)$.
We would like to express the DSL in a similar manner.
To that end, we analyze the limiting behavior of the eigenvalues
 of $I^{p}_n\nabla^2 k I^{p}_n$ as $p$ tends to infinity, or more generally
of
\beq\lb{ApEq}
A_p:=I^{p}_n A I^{p}_n,
\eeq
for any $A\in\Sym(\R^{n+1})$.

To state the result concerning the eigenvalues, we introduce the following
notation. Given a matrix $A=[a_{ij}]_{i,j=0}^n\in\Sym(\R^{n+1})$, we denote
the characteristic polynomial of $A$ by
\[
\chi_A(\lambda):= \sum_{i = 0}^{n+1} (-\lambda)^i \sigma_{n+1-i}(A),
\]
where $\sigma_j$ denotes the sum of all principal $j$-by-$j$ minors of $A$, or equivalently,
the $j$-th symmetric polynomial in the eigenvalues of $A$. We use the convention
that $\sigma_0(A)=1$ and $\sigma_j(A)=0$ if $j>n+1$.

Let $A^+ \in \Sym(\R^n)$ be defined by
\beq
A^+:= [a_{ij}]_{i,j=1}^n.
\eeq
For $A\in\Sym(\RR^{n+1})$ define,
\[
\rho_i(A):=
\sigma_i(A) - \sigma_i(A^+),
\qq i=0,\ldots,n,
\]
set $\rho_{n+1}(A):=\sigma_{n+1}(A)$ and set
\beq\label{chiinftyEq}
\chi_A^\infty(\mu):= \sum_{i=0}^n (-\mu)^i \rho_{n+1-i}(A).
\eeq
Note that $\chi_A^\infty$ has degree $n$ precisely
when $a_{00}\not=0$. 
When $a_{00}=0$ but $A\not=\diag(0,A^+)$, the polynomial
$\chi_A^\infty$ has degree $n-1$. Indeed,
\begin{equation}\label{eq:rhot}
\rho_2(A) = -\sum_{i=1}^{n}a_{0i}^2 <0.
\end{equation}
According to Lemma \ref{ApproxEigValLemma} below,
regardless of the degree of $\chi_A^\infty$, all of its
roots are real. Therefore, the following definition makes
sense.

\bdefin
\label{muDef}
Let $\chi_A^\infty$ be defined by
\eqref{chiinftyEq}.
Denote the roots of $\chi_A^\infty$   by
\beq\label{muiEq}
\mu_1(A) \geq \cdots \ge \mu_n(A)
\eeq
when its degree equals $n$, and by $\mu_1(A) \geq \cdots \ge \mu_{n-1}(A)$ when the degree equals $n-1$.  In this latter case we set $\mu_{n}(A):=0$ for later convenience.
Finally, if $A=\diag(0,A^+)$, we set
\beq
\label{diagmuEq}
\mu_i(A):=\lambda_i(A^+), \q i=1,\ldots,n,
\eeq
which are, of course, real as well.
\edefin

\begin{lemma}
\lb{ApproxEigValLemma}
If $a_{00} > 0,$ then
$
\lim_{p \to \infty} \lambda_{i}(A_p) = \mu_i(A),\, i=1,\ldots,n,
\;
\lim_{p \to \infty} p^{-2}\lambda_0(A_p) = a_{00}.
$
When $a_{00} < 0,$ we have
$
\lim_{p \to \infty} \lambda_{i-1}(A_p) = \mu_i(A), \, i=1,\ldots,n,
\;
\lim_{p \to \infty} p^{-2}\lambda_{n}(A_p) = a_{00}.
$
When $a_{00} = 0$, but $A\not=\diag(0,A^+)$, we have
$
\lim_{p \to \infty} \lambda_{i}(A_p)=\mu_i(A), \, i=1,\ldots,n-1,
$
while $\lim_{p \to \infty}p^{-1}\lambda_{0}(A_p)=\sqrt{-\rho_2(A)}>0$
and $\lim_{p \to \infty}p^{-1}\lambda_n(A_p)=-\sqrt{-\rho_2(A)}<0$.
\end{lemma}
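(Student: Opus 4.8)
The plan is to reduce everything to a single polynomial identity and then to standard facts about how the roots of a polynomial degenerate as its degree drops. First I would record an exact formula for the characteristic polynomial $\chi_{A_p}$. Since $A_p=I^p_nAI^p_n$, a principal $j$-by-$j$ minor of $A_p$ indexed by $S\subset\{0,\dots,n\}$ equals the corresponding minor of $A$ when $0\notin S$ and $p^2$ times it when $0\in S$; summing over $|S|=j$ gives $\sigma_j(A_p)=\sigma_j(A^+)+p^2\rho_j(A)$ for $0\le j\le n$, and $\sigma_{n+1}(A_p)=p^2\rho_{n+1}(A)$. Substituting these into $\chi_{A_p}(\lambda)=\sum_{i=0}^{n+1}(-\lambda)^i\sigma_{n+1-i}(A_p)$, using $\sigma_{n+1}(A^+)=0$ and $\rho_0(A)=0$, and recognising the two resulting sums via the definition \eqref{chiinftyEq} of $\chi_A^\infty$, I obtain
\[
\chi_{A_p}(\lambda)=-\lambda\,\chi_{A^+}(\lambda)+p^2\,\chi^\infty_A(\lambda).
\]
This identity is the engine of the whole argument.

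Next I would let $p\to\infty$. All coefficients of $p^{-2}\chi_{A_p}$ converge to those of $\chi^\infty_A$ while its top coefficient tends to $0$; hence, by continuity of the roots of a polynomial as a function of its coefficients under such a degeneration of degree (Hurwitz's theorem near each fixed root, together with the standard fact that the excess roots escape to infinity), exactly $d:=\deg\chi^\infty_A$ of the $n+1$ roots of $\chi_{A_p}$ converge, counted with multiplicity, to the roots of $\chi^\infty_A$, and the remaining $n+1-d$ roots tend to $\infty$ in absolute value. Since the roots of $\chi_{A_p}$ are real (it is the characteristic polynomial of the symmetric matrix $A_p$), every root of $\chi^\infty_A$ is a limit of real numbers, hence real; this is the reality assertion underlying Definition~\ref{muDef}. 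Using $\rho_1(A)=a_{00}$ one has $d=n$ when $a_{00}\ne0$, and, using \eqref{eq:rhot}, $d=n-1$ when $a_{00}=0$; so $1$ root escapes in the former case and $2$ in the latter, and the non-escaping roots converge, as a decreasingly ordered tuple, to $(\mu_1(A),\dots,\mu_n(A))$, resp.\ to $(\mu_1(A),\dots,\mu_{n-1}(A))$.

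Finally I would locate the escaping roots by rescaling $\lambda$ in the identity. When $a_{00}\ne0$, substituting $\lambda=p^2 s$ and keeping only the top-degree terms gives $p^{-2(n+1)}\chi_{A_p}(p^2 s)\to(-1)^n s^{n}(a_{00}-s)$, so the escaping root is asymptotic to $p^2a_{00}$; it is $\lambda_0(A_p)$ when $a_{00}>0$ and $\lambda_n(A_p)$ when $a_{00}<0$, whence $p^{-2}\lambda_0(A_p)\to a_{00}$, resp.\ $p^{-2}\lambda_n(A_p)\to a_{00}$, and the labelling of the remaining eigenvalues by $\mu_1(A)\ge\cdots\ge\mu_n(A)$ is forced by monotonicity. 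When $a_{00}=0$ and $A\ne\diag(0,A^+)$, substituting $\lambda=ps$ gives $p^{-(n+1)}\chi_{A_p}(ps)\to(-1)^{n+1}s^{n-1}(s^2+\rho_2(A))$, and since $\rho_2(A)<0$ by \eqref{eq:rhot} the two escaping roots are asymptotic to $\pm p\sqrt{-\rho_2(A)}$, yielding $p^{-1}\lambda_0(A_p)\to\sqrt{-\rho_2(A)}$ and $p^{-1}\lambda_n(A_p)\to-\sqrt{-\rho_2(A)}$, with the intermediate eigenvalues converging to $\mu_1(A),\dots,\mu_{n-1}(A)$. (The excluded case $A=\diag(0,A^+)$ is trivial: then $A_p=A$ for all $p$.) I expect the only genuinely delicate point to be the bookkeeping — matching each root of $\chi_{A_p}$ with the correct eigenvalue $\lambda_i(A_p)$ and choosing the right power of $p$ in the rescaling for each of the three cases; the degree inputs $\rho_1(A)=a_{00}$ and $\rho_2(A)=-\sum_i a_{0i}^2<0$ are precisely what disambiguate these.
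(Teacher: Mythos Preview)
Your proof is correct and follows the same overall architecture as the paper's: compute $\sigma_j(A_p)=\sigma_j(A^+)+p^2\rho_j(A)$, deduce that $p^{-2}\chi_{A_p}$ converges coefficientwise to $\chi_A^\infty$, conclude that the appropriate number of eigenvalues converge to the $\mu_i(A)$, and then locate the escaping eigenvalues. The one methodological difference is in this last step. The paper identifies the escaping roots via Vieta's identities: when $a_{00}\ne0$ it uses $\tr A_p=p^2a_{00}+\tr A^+$ to pin down the single unbounded eigenvalue as $p^2a_{00}+O(1)$, and when $a_{00}=0$ it combines this with $\sigma_2(A_p)=p^2\rho_2(A)+\sigma_2(A^+)$ to see that the two unbounded eigenvalues satisfy $\lambda_0+\lambda_n=O(1)$ and $\lambda_0\lambda_n\sim p^2\rho_2(A)$. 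Your rescaling $\lambda=p^2s$ (resp.\ $\lambda=ps$) in the clean identity $\chi_{A_p}(\lambda)=-\lambda\,\chi_{A^+}(\lambda)+p^2\chi_A^\infty(\lambda)$ is a more uniform device that handles all cases in the same way and reads off the asymptotics directly from the limiting polynomial; the paper's approach is slightly more elementary but requires ad hoc bookkeeping for each case. Either way the substance is the same.
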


\def\s{\sigma}
\bpf
Our assumptions imply that
$A\not=\diag(0,A^+)$.
At first, we also assume that $a_{00}\not=0$.

The polynomial $\s_j(A_p)$ is the weighted sum of all symmetric $j$-by-$j$ minors of $A$,
with weight equal to $p^2$ if the minor involves the first row and column of $A$,
and weight equal to $1$ otherwise.
In other words, $\s_j(A_p)= p^2\rho_j(A)+\s_j(A^+)$.
Thus,
\beq\lb{chiApEq}
\chi_{A_p}(\la) = \sum_{i=0}^{n+1} (-\la)^i \big[p^2\rho_{n+1-i}(A)+\s_{n+1-i}(A^+)\big].
\eeq
By definition, $\rho_0(A)=0$.
Hence, the equation $\chi_{A_p}(\la) = 0$ can be rewritten as
\beq\lb{chiAp3Eq}
\chi^\infty_A(\la)-p^{-2}q(\la)=0,
\eeq
where $q(\la)$ is a polynomial of degree $n+1$ whose coefficients
are bounded independently of $p$.
Since $A_p$ is symmetric, equation \eqref{chiAp3Eq} has $n+1$ real roots.
Recall that $\mu_1(A),\ldots,\mu_n(A)\in\C$ denote the $n$ roots of $\chi^\infty_A$.
Let $R>0$ be such that $|\mu_i(A)|<R-1$ for each $i$.
Applying the argument principle to the left hand side of \eqref{chiAp3Eq},
it follows that for all sufficiently large $p$,
$\chi_{A_p}(\la)$ has exactly $n$ roots in $\{z\in\C\,:\, |z|<R\}$.
Moreover, $\{\mu_i(A)\}_{i=1}^n$ is the limit set of these $n$ roots.

On the other hand, the sum of the eigenvalues of $A_p$ equals $\tr A_p =  p^2a_{00}+\tr A^+,$ and we have already showed that $n$ of the eigenvalues of $A_p$ are bounded. It follows that $p^2a_{00}$ is an eigenvalue of $A_p$ up to $O(1)$. This completes the proof in the case $a_{00}\not=0$.

Suppose now that $a_{00}=0$ but still $A\not=\diag(0,A^+)$.
Then $\chi^\infty_A$ has degree $n-1$ and the argument principle still
implies that $n-1$ of the eigenvalues of $A_p$ limit to $\mu_i(A), i=1,\ldots,n-1$.
On the one hand, $\sum \la_i(A_p) = \tr A_p = (-1)^n\tr A^+$, implying that the sum of the remaining two eigenvalues of $A_p$ is
bounded independently of $p$.
On the other hand, $\sigma_2(A_p) = (p^2\rho_2(A)+\sigma_2(A^+))=O(p^2)\in\RR$. This polynomial also equals
$\sum_{i>j}\la_i(A_p)\la_j(A_p)$. Putting all these facts together and keeping in mind~\eqref{eq:rhot}, we obtain
\[
\la_n(A_p)=-\sqrt{-\rho_2(A)}p+o(p), \qquad \la_0(A_p)=\sqrt{-\rho_2(A)}p+o(p),
\]
with
$\la_n(A_p)+\la_0(A_p)=O(1)$.
\epf

Set
\[
\sign(x):= \begin{cases}
-1, & x < 0, \\
0, & x = 0, \\
1, & x > 0.
\end{cases}
\]

\bthm
\lb{uscargFormulaThm}
For $A=[a_{ij}]_{i,j=0}^n\in\Sym(\RR^{n+1})$ with $A \neq \diag(0,A^+),$ let $\mu_1(A),\ldots,\mu_n(A)$
be as in Definition \ref{muDef}. Let $\widehat\Theta(A)$ be given by formula~\eqref{eq:whTh}.
Then
\[
\widehat \Theta(A)= \lim_{p \to \infty} \tr\tan^{-1}(A_p) =
\frac\pi2\sign(a_{00})+
\sum_{j=1}^n\tan^{-1}\mu_j(A).
\]
\ethm

As an immediate corollary we obtain a formula for the
space-time Lagrangian angle.

\bcor
\label{wtThkFormulaCor}
The lifted space-time Lagrangian angle is given by
$$
\wtThk(t,x)=
\begin{cases}
\displaystyle
\frac\pi2\sign(\ddot k(t,x))+
\sum_{j=1}^n\tan^{-1}\mu_j(\nabla^2k(t,x)),
&
\hbox{if $\nabla^2k(t,x)\not=\diag(0,\nabla^2_xk(t,x))$,}
\cr
\displaystyle
\frac\pi2+\tr\tan^{-1}(\nabla^2_xk(t,x)),
&
\h{otherwise.}
\end{cases}
$$
\ecor

\begin{proof}[Proof of Theorem~\ref{uscargFormulaThm}]
Denote by $\arg: \C \to (-\pi,\pi]$ the argument function.
For $B \in \Sym(\CC^{n+1})$ denote by $\spec(B)$ the spectrum of $B,$ and let
\[
\mathcal B = \{B \in \Sym(\CC^{n+1})| \spec(B) \cap \RR_{\leq 0} = \emptyset\}.
\]
Define $\arg : \Sym(\CC^{n+1})\setminus\mathcal B \to \Sym(\CC^{n+1})$ by
\[
\arg(B):=\frac1{2\pi\i}
\int_\gamma (\zeta I-B)^{-1}\arg\zeta\, d\zeta
\]
where $\gamma$ is a contour in $\CC\setminus \RR_{\leq 0}$ enclosing $\spec B.$ It follows from the definition that $\arg(B)$ depends continuously on $B \in \Sym(\CC^{n+1})\setminus\mathcal B.$
By \cite[p. 45]{Kato} the eigenvalues of $\arg(B)$ are the arguments of the eigenvalues of $B$ with corresponding multiplicities, so
\begin{equation}\label{eq:argdettrarg}
\arg \det B=\tr\arg(B)\;\mod 2\pi.
\end{equation}
By Lemma~\ref{RealPartEVLemma} we have $\spec(I^{\eps^2}_n+\i A)\cap\RR_{\leq 0} = \emptyset$ for $\eps \geq 0.$ Moreover, for $\eps>0$,
\begin{equation}\label{eq:argeps}
\baeq
\arg\det(I^{\eps^2}_n+\i A)
&=
\arg\det
\big(I_n^{\eps}(I+\i A_{1/\eps})I_n^{\eps}\big)
\cr
&=
\arg \eps^2\det(I+\i A_{1/\eps})
=
\arg \det(I+\i A_{1/\eps}).
\eaeq
\end{equation}
Combining equations~\eqref{eq:argdettrarg} and~\eqref{eq:argeps}, we obtain
\begin{equation}\label{eq:trargeps}
\tr\arg(I+\i A_{1/\eps})=
\tr\arg(I^{\eps^2}_n+\i A)
\;\mod 2\pi.
\end{equation}
Since the left and right hand sides of congruence~\eqref{eq:trargeps} are continuous functions of
$\eps$ that coincide for $\eps=1$, it follows that they are actually
equal.
Setting $\eps=1/p$ and using the continuity of $\arg$ gives
\begin{align*}
\widehat\Theta(A) = \tr\arg(I_n+\i A)&=\lim_{p\ra\infty}\tr\arg(I^{1/p^2}_n + \i A)\\
       & =  \lim_{p\ra\infty}\tr\arg(I+\i A_{p}) = \lim_{p\ra\infty}\tr\tan^{-1}(A_p).
\end{align*}
Theorem \ref{uscargFormulaThm} now follows
directly from Lemma \ref{ApproxEigValLemma}.
Indeed, this is clear in the case $a_{00}\not=0$.
In the case $a_{00}=0$ this follows as well,
since the smallest and largest eigenvalues of $A_p$
have asymptotically canceling contributions to $\tr\tan^{-1}(A_p).$
\end{proof}

\bremark
An alternative proof of Lemma \ref{AlmostDSetLemma}
can be given by using the results of the Appendix.
To this end, recall the definition of the subequation $F_c \subset \Symnplusone$ from~\eqref{eq:Fc}. Let
$$
\calF^p_c:= \big\{A \in \Sym(\R^{n+1})\,:\,
A_p\in F_c\}.
$$
We first prove that $\calF^p_c$ is a subequation.
Indeed, let $A\in \calF^p_c$ and $P\in\P$.
Now $A_p\in F_c$, and for any $P\in\P$ also $P_p\in\P$.
So, since $F_c$ is a subequation, $A_p+P_p=(A+P)_p\in F_c$. Thus,
$A+P\in\calF_c^p$, and $\calF_c^p$ is a subequation.

Next, by Lemma \ref{ApproxEigValLemma}, given
$A\in\calF_c$ such that $A\neq\diag(0,A^+)$ and $\eps>0$, there
exists $p_0$ such that $A_p\in F_{c-\eps}$,
i.e., $A\in\calF^p_{c-\eps}$, for all $p\ge p_0$. Since $\calF_{c-\eps}^p$ is a subequation, given any $P\in\P$
one has
$A+P\in\calF^p_{c-\eps}$. By Lemma \ref{ApproxEigValLemma} there exists $p_1$ such
that if $p\ge\max\{p_0,p_1\}$ then $A+P\in\calF^p_{c-\eps}$ implies
that $A+P\in \calF_{c-2\eps}$ whenever $A+P\not=\diag(0,A^++P^+)$.
The implication continues to hold when $A+P=\diag(0,A^++P^+)$ because $\calF^p_{c-\eps}$ is closed (Lemma~\ref{lm:calFcc}) and the set of $P \in \P$ such that $A+P\not=\diag(0,A^++P^+)$ is dense.
Since $\eps>0$ was arbitrary, it follows that $A+P\in\calF_c$.

On the other hand, if $A \in \calF_c$ and $A = \diag(0,A^+)$, then $A + \delta I \in \calF_c$ for all $\delta > 0$ by Lemma~\ref{ApproxEigValLemma}. Moreover, $A + \delta I \neq \diag(0,A^+ + \delta I)$.
So, for all $P \in \P,$ we have $A + \delta I + P \in \calF_c.$ But $\calF_c$ is closed, so this implies $A + P \in \calF_c$ as desired.
\eremark

\section{Geometric formulation of the DSL}\label{sec:gdsl}
This section is devoted to a geometric formulation of the DSL equation valid in an arbitrary Calabi-Yau manifold $(X,J,\omega,\Omega).$ We use the notation of Section~\ref{SPLSec}. In particular, $n = \dim_\C X$ and $\calO_\th$ is an exact isotopy class of $\theta$-positive Lagrangians in $X.$

To $X$ we associate the complex $(n+1)$-dimensional manifold
\[
Y_X = [0,1]\times \R \times X.
\]
Let $p_X : Y_X \to X$ denote the projection, and denote by $x,y,$ the projections to $[0,1],\R,$ respectively. We think of $x,y,$ as coordinates, and denote by $\del_x,\del_y,$ the corresponding vector fields. We equip $Y_X$ with the family of complex structures $\{\tJ_\epsilon\}_{\epsilon > 0}$ defined by
\[
\tJ_\eps \del_x  = \eps \del_y, \qquad \tJ_\eps \del_y = \frac{1}{\eps} \del_x, \qquad \tJ_\eps \xi = J\xi \quad \text{for $\xi$ tangent to $X$}.
\]
Furthermore, we equip $Y_X$ with the K\"ahler form
\[
\tom = dx\wedge dy + p_X^*\omega
\]
and the $\tJ_\eps$-holomorphic $(n+1,0)$-form
\[
\tO_\eps = (\eps dx + \i dy) \wedge p_X^*\Omega.
\]
To a path $\Lambda : [0,1] \to \calO_\th,$ we associate the Lagrangian suspension
$
\Gamma_\Lambda \subset Y_X
$
defined as follows. Let $h_t : \Lambda_t \to \R$ be given by $h_t = d\Lambda/dt.$ Then
\[
\Gamma_\Lambda = \{(x,y,z)\in [0,1]\times\R\times X| z \in \Lambda_t, y = -h_x(z)\}.
\]
It is well-known that $\Gamma_\Lambda$ is a Lagrangian submanifold~\cite{Po93}. Observe that while $\tJ_\epsilon$ is not well-defined when $\epsilon = 0,$ the form $\tO_0 = \i dy \wedge p_X^*\Omega$ is well-defined.
\begin{prop}
The path $\Lambda$ is a geodesic of the metric $(\cdot,\cdot)_\th$ if and only if
\begin{equation}\label{eq:gdsl}
\Im  e^{-\i \th}\tO_0|_{\Gamma_\Lambda} = 0.
\end{equation}
\end{prop}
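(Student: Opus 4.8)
The plan is to reduce equation~\eqref{eq:gdsl} to the geodesic equation~\eqref{eq:crfl} by a direct computation of the pullback of $\Im(e^{-\i\th}\tO_0)$ to $\Gamma_\Lambda$, using the parametrization of $\Gamma_\Lambda$ by $[0,1]$ together with a family of diffeomorphisms of $L$. Concretely, I would fix a family $g_t : L \to \Lambda_t$ as in \S\ref{LeviCivitaSubSec} and use the induced diffeomorphism $G : [0,1]\times L \to \Gamma_\Lambda$, $G(t,p) = (t, -h_t(g_t(p)), g_t(p))$, where $h_t = d\Lambda_t/dt$. Then $G^*(\i\, dy\wedge p_X^*\Omega) = \i\, d(-h_t\circ g_t)\wedge g_t^*\Omega$, so $\tO_0$ restricted to $\Gamma_\Lambda$ vanishes along directions tangent to a fixed slice $\{t\}\times L$ in the $dy$-free part, and the whole form is determined by its pairing with $\del_t$ (the $[0,1]$-direction). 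Pairing with $\del_t$ and taking the imaginary part of $e^{-\i\th}$ times the result, one gets an $n$-form on $L$ of the shape $\big(\text{something involving } \del_t(h_t\circ g_t)\big)\,g_t^*\Re(e^{-\i\th}\Omega)$ plus a term $\iota_{dg_t/dt}\,\Im(e^{-\i\th}\Omega)$ coming from the $t$-dependence of $g_t$ itself inside $g_t^*\Omega$.

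The next step is to recognize that the resulting equation is exactly the covariant-derivative condition. Here I would invoke the reformulation in \S\ref{LeviCivitaSubSec}: after replacing $g_t$ by $\tilde g_t = g_t\circ\phi_t$ with $\phi_t$ the flow of the vector field $w_t$ defined by~\eqref{eq:crfl}, we have $\iota_{d\tilde g_t/dt}\Re(e^{-\i\th}\Omega) = 0$ by~\eqref{eq:iotg}, and $Dh_t/dt = \del_t(h_t\circ\tilde g_t)$ by~\eqref{ReformulateGeodEq}. Using $\tilde g_t$ in place of $g_t$ in the computation above kills the extra $\iota_{d\tilde g_t/dt}\Re(e^{-\i\th}\Omega)$ term, and $G^*\Im(e^{-\i\th}\tO_0)$ becomes, up to a nowhere-vanishing factor (the volume form $\tilde g_t^*\Re(e^{-\i\th}\Omega)$, which is a volume form precisely because $\Lambda_t\in\calO_\th$ is $\th$-positive), equal to $\del_t(h_t\circ\tilde g_t) = Dh_t/dt$. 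Hence $\Im(e^{-\i\th}\tO_0)|_{\Gamma_\Lambda}=0$ if and only if $Dh_t/dt = 0$, i.e.\ $\Lambda$ is a geodesic. One should double-check that $\tO_0|_{\Gamma_\Lambda}$ has no component purely along the slice directions that could spoil the equivalence; this follows since $dy$ restricted to $\Gamma_\Lambda$ is $d(-h_t\circ g_t)$, so $\iota_V \tO_0|_{\Gamma_\Lambda}$ for $V$ tangent to a slice is $(dh_t(V))\cdot(\pm\i\,g_t^*\Omega$-type term$)$, and these assemble correctly.

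I expect the main obstacle to be bookkeeping the $\Re$/$\Im$ parts cleanly: the form $\tO_0 = \i\,dy\wedge p_X^*\Omega$ mixes the real and imaginary parts of $\Omega$ through the factor $\i$, and one must carefully track how $e^{-\i\th}$ interacts with this, so that the vanishing of $\Im(e^{-\i\th}\tO_0|_{\Gamma_\Lambda})$ lands on $\Re(e^{-\i\th}\Omega)$ (the volume form) times the covariant derivative rather than on some other combination. A secondary subtlety is justifying that it suffices to test the $(n+1)$-form $\Im(e^{-\i\th}\tO_0)|_{\Gamma_\Lambda}$ against $\del_t$ alone — this is fine because $\Gamma_\Lambda$ is $n+1$-dimensional and foliated by the slices $\Lambda_t$, so the form is determined by its top-degree value, which is its pairing with $\del_t\wedge(\text{basis of }T\Lambda_t)$. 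Once these sign/reality issues are handled, the equivalence with~\eqref{eq:crfl} (equivalently, with the vanishing of $Dh_t/dt$) is immediate, and combined with Proposition~\ref{SPLgeodEq} in the case $X=\C^n$ this recovers the DSL~\eqref{DSLMainEq}, consistent with the shape of $\tO_\eps$ degenerating to $I_n = \diag(0,1,\dots,1)$ as $\eps\to 0$.
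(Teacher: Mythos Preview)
Your approach is correct and essentially the same as the paper's: both pull back $\Im(e^{-\i\th}\tO_0)=dy\wedge\Re(e^{-\i\th}p_X^*\Omega)$ to $[0,1]\times L$ via a parametrization of $\Gamma_\Lambda$ and identify the result with $(Dh_t/dt)$ times the nowhere-vanishing volume form coming from positivity. The one difference is tactical: the paper works with an arbitrary $g_t$, keeps the cross-term $d(h_t\circ g_t)\wedge\iota_{dg_t/dt}\Re(e^{-\i\th}\Omega)$, and simplifies it via~\eqref{eq:crfl} and the derivation rule for $\iota_{w_t}$ to recognize the full expression~\eqref{eq:lcc} for $Dh_t/dt$; you instead pass to the adapted family $\tilde g_t$ so that~\eqref{eq:iotg} kills the cross-term outright and~\eqref{ReformulateGeodEq} gives $Dh_t/dt=\partial_t(h_t\circ\tilde g_t)$ directly. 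Your route is marginally cleaner and avoids the interior-product identity. On the $\Re/\Im$ bookkeeping you flag: note that $\Im(e^{-\i\th}\,\i\,dy\wedge\Omega)=dy\wedge\Re(e^{-\i\th}\Omega)$, so the extra term is $\iota_{dg_t/dt}\Re(e^{-\i\th}\Omega)$ (not $\Im$), which is exactly what~\eqref{eq:iotg} annihilates; once you fix that sign the rest goes through as you describe.
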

\begin{proof}
Choose a family of diffeomorphisms $g_t : L \to \Lambda_t$ and let $w_t$ be the vector field on $L$ defined as in~\eqref{eq:crfl}. Denote by $p_L : [0,1]\times L \to L$ and $t : [0,1]\times L \to [0,1]$ the canonical projections. Let $\tg : [0,1]\times L \to \Gamma_\Lambda$ be the diffeomorphism given by
\[
\tg(t,p) = (t,-h_t(p),g_t(p)).
\]
So, equation~\eqref{eq:gdsl} is equivalent to $\tg^*\im e^{-\i \th}\tO_0 = 0.$ But
\begin{multline}\label{eq:gh}
\tg^*\im e^{-\i \th}\tO_0  = \\
\begin{aligned}
&= \tg^*(dy \wedge \Re e^{-\i \th} \tO_0)\\
& = \big(-p_L^*d(h_t\circ g_t) - \partial_t (h_t \circ g_t) dt\big) \wedge \left(p_L^*g_t^*\Re e^{-\i \th}\Omega + dt \wedge p_L^* i_{dg_t/dt} \Re e^{-\i \th} \Omega\right )\\
& = -dt \wedge p_L^*\left(\partial_t (h_t\circ g_t) g_t^*\Re e^{-\i \th}\Omega - d(h_t \circ g_t)\wedge i_{dg_t/dt}\Re e^{-\i \th}\Omega\right).
\end{aligned}
\end{multline}
Moreover, by~\eqref{eq:crfl} and the signed derivation property of the interior product, we have
\begin{multline}\label{eq:sder}
d(h_t \circ g_t)\wedge i_{dg_t/dt}\Re e^{-\i \th}\Omega = \\ =-d(h_t \circ g_t)\wedge i_{w_t}g_t^*\Re e^{-\i \th}\Omega = - g_t^* dh_t (w_t)  g_t^*\Re e^{-\i \th}\Omega.
\end{multline}
Combining equations~\eqref{eq:gh} and~\eqref{eq:sder} and recalling \eqref{eq:lcc}, we obtain
\begin{multline*}
\tg^*  e^{-\i \th}\im \tO_0
= - \Big (\partial_t (h_t\circ g_t) + g_t^* dh_t (w_t)\Big) dt \wedge p^*_L g_t^* \Re e^{-\i \th} \Omega = \\
=- \left(\frac{Dh_t}{dt}\circ g_t\right) dt \wedge p^*_L g_t^* \Re e^{-\i \th} \Omega.
\end{multline*}
Since the Lagrangians $\Lambda_t$ are positive, the form $dt \wedge p^*_L g_t^* \Re e^{-\i \th} \Omega$ vanishes nowhere. The proposition follows.
\end{proof}
\begin{remark}
For $\epsilon > 0,$ the equation $\Im  e^{-\i \th}\tO_\eps|_{\Gamma_\Lambda} = 0$ is the special Lagrangian equation for $\Gamma_\Lambda$ with respect to $\tO_\eps.$ So, as in Appendix~\ref{LimitDSet}, the DSL equation arises as a limiting case of the non-degenerate elliptic special Lagrangian equation.
\end{remark}
\section*{Acknowledgments}
This work was supported by BSF grant 2012236.
YAR was also supported by NSF grant DMS-1206284 and a Sloan Research Fellowship.
JPS was also supported by ERC starting grant 337560.
We are grateful to M. Dellatorre and a referee for a careful reading.

\def\listing#1#2#3{{\sc #1}:\ {\it #2}, \ #3.}

\vspace{.5 cm}
\noindent

{\sc University of Maryland}

{\tt yanir@umd.edu}

\bigskip

{\sc Hebrew University}

{\tt jake@math.huji.ac.il}

\end{document}